\documentclass[11pt]{article}

\usepackage{amssymb,amsmath,amsthm}
\usepackage{enumerate}
\usepackage{cases}

\topmargin -.5in
\textheight 9in
\textwidth 6.5in
\oddsidemargin 0.0in
\evensidemargin 0.0in

\newcommand{\n}{\noindent}
\newcommand{\ovl}{\overline}
\newcommand{\intl}{\int\limits}

\newcommand{\bb}[1]{\mathbb{#1}}

\newcommand{\vp}{\varepsilon}
\newcommand{\sst}{\scriptstyle}

\newcommand{\ms}{\medskip}

\theoremstyle{plain}
\newtheorem{thm}{Theorem}[section]
\newtheorem{lem}{Lemma}[section]
\newtheorem{pro}{Proposition}[section]

\newtheorem{rem}{Remark}[section]

\theoremstyle{definition}

\numberwithin{equation}{section}

\title{\vskip-0.4in Pointwise Bounds and Blow-up for Nonlinear 
Polyharmonic Inequalities}

\author{Steven D.~Taliaferro
\footnote{Mathematics Department,
Texas A\&M University,
College Station, TX 77843-3368,
{\tt stalia@math.tamu.edu},
Phone 979-845-7554,
Fax 979-845-6028}}

\date{} 
\begin{document}
 
\maketitle

\thispagestyle{empty}


\begin{abstract}
  We obtain results for the following question where $m\ge 1$ and
  $n\ge 2$ are integers.  \ms

  \n {\bf Question.} For which continuous functions $f\colon
  [0,\infty)\to [0,\infty)$ does there exist a continuous function
  $\varphi\colon (0,1)\to (0,\infty)$ such that every $C^{2m}$
  nonnegative solution $u(x)$ of
\[
 0 \le -\Delta^m u\le f(u)\quad \text{in}\quad B_2(0)\backslash\{0\}\subset {\bb R}^n
\]
satisfies
\[
 u(x) = O(\varphi(|x|))\quad \text{as}\quad x\to 0
\]
and what is the optimal such $\varphi$ when one exists?
\medskip

\[ \text{\bf R\'esum\'e} \] 

Nous obtenons des r\'esultats pour la question
suivante, avec $m\ge1$ et $n\ge2$ entiers.  \ms
  
\n {\bf Question.} Pour quelles fonctions continues $f\colon
  [0,\infty)\to [0,\infty)$ existe-t-il une fonction continue
  $\varphi\colon (0,1)\to (0,\infty)$ telle que chaque solution $C^{2m}$
  non-negative $u(x)$ de
\[
 0 \le -\Delta^m u\le f(u)\quad \text{dans}\quad B_2(0)\backslash\{0\}\subset {\mathbb R}^n
\]
satisfasse \`a
\[
 u(x) = O(\varphi(|x|))\quad \text{lorsque}\quad x\to 0,
\]
et quelle est la meilleure de ces fonctions $\varphi$ quand elle existe?
\medskip

\n {\it Keywords}: Isolated singularity; Polyharmonic; Blow-up;
Pointwise bound.
\smallskip

\n {\it 2010 Mathematics Subject Classification Codes}: 35B09, 35B33,
35B40, 35B44, 35B45, 35R45, 35J30, 35J91

\end{abstract}
\section{Introduction}\label{sec1.1}

In this paper we consider the following question where $m\ge 1$ and
$n\ge 2$ are integers.  \ms

\n {\bf Question 1.} For which continuous functions $f\colon
[0,\infty)\to [0,\infty)$ does there exist a continuous function
$\varphi\colon (0,1)\to (0,\infty)$ such that every $C^{2m}$
nonnegative solution $u(x)$ of
\begin{equation}\label{eq1.1}
 0 \le -\Delta^m u\le f(u)\quad \text{in}\quad 
  B_2(0)\backslash\{0\}\subset {\bb R}^n
\end{equation}
satisfies
\begin{equation}\label{eq1.2}
 u(x) = O(\varphi(|x|))\quad \text{as}\quad x\to 0
\end{equation}
and what is the optimal such $\varphi$ when one exists?
\ms

\n We call a function $\varphi$ with the above properties a pointwise a priori bound (as $x\to 0$) for $C^{2m}$ nonnegative solutions $u(x)$ of \eqref{eq1.1}.

As we shall see, when $\varphi$ in Question~1 is optimal, the estimate
\eqref{eq1.2} can sometimes be sharpened to
\[
 u(x) = o(\varphi(|x|))\quad \text{as}\quad x\to 0.
\]

\begin{rem}\label{rem1.1}
Let
\begin{equation}\label{eq1.3}
 \Gamma(r) = \begin{cases}
              r^{-(n-2)},&\text{if $n\ge 3$;}\\
\log \frac5r,&\text{if $n=2$.}
             \end{cases}
\end{equation}
Since $u(x)=\Gamma(|x|)$ is a positive solution of $-\Delta^mu = 0$ in
$B_2(0) \backslash\{0\}$, and hence a positive solution of
\eqref{eq1.1}, any pointwise a priori bound $\varphi$ for $C^{2m}$
nonnegative solutions $u(x)$ of \eqref{eq1.1} must be at least as
large as $\Gamma$, and whenever $\varphi=\Gamma$ is such a bound it is
necessarily an optimal bound.
\end{rem}

Some of our results for Question~1 can be generalized to allow the
function $f$ in \eqref{eq1.1} to depend nontrivially on $x$ and the
partial derivatives of $u$ up to order $2m-1$.
(See the second paragraph after Proposition \ref{pro1.1}.)

We also consider the following analog of Question~1 when the
singularity is at $\infty$ instead of at the origin.\medskip

\n {\bf Question 2.} For which continuous functions $f\colon [0,\infty)\to [0,\infty)$ does there exist a continuous function $\varphi\colon  (1,\infty)\to (0,\infty)$ such that every $C^{2m}$ nonnegative solution $v(y)$ of
\begin{equation}\label{eq1.25}
0 \le -\Delta^m v\le f(v)\quad \text{in}\quad {\bb R}^n\backslash B_{1/2}(0)
\end{equation}
satisfies
\[
v(y) = O(\varphi(|y|))\quad \text{as}\quad |y|\to \infty
\]
and what is the optimal such $\varphi$ when one exists?
\medskip

The $m$-Kelvin transform of a function $u(x)$, 
$x\in\Omega\subset {\bb R}^n\backslash\{0\}$, is defined by
\begin{equation}\label{eq1.26}
v(y) = |x|^{n-2m} u(x)\quad \text{where}\quad x=y/|y|^2.
\end{equation}
By direct computation, $v(y)$ satisfies
\begin{equation}\label{eq1.27}
\Delta^mv(y) = |x|^{n+2m} \Delta^mu(x).
\end{equation}
See \cite[p.~221]{WX}  or \cite[p.~660]{X}.
Using this fact and some of our results for Question~1, we will obtain
results for Question~2.

Nonnegative solutions in a punctured neighborhood of the origin in
${\bb R}^n$---or near $x=\infty$ via the $m$-Kelvin transform---of
problems of the form
\begin{equation}\label{eq1.33}
 -\Delta^m u = f(x,u)\quad\text{or}\quad 0\le - \Delta^mu \le f(x,u)
\end{equation}
when $f$ is a nonnegative function have been studied in
\cite{CMS,CX,GL,H,MR,WX,X} and elsewhere. These problems arise
naturally in conformal geometry and in the study of the Sobolev
embedding of $H^{2m}$ into $L^{\frac{2n}{n-2m}}$.

Pointwise estimates at $x=\infty$ of solutions $u$ of problems
\eqref{eq1.33} can be crucial for proving existence results for entire
solutions of \eqref{eq1.33} which in turn can be used to obtain, via
scaling methods, existence and estimates of solutions of boundary
value problems associated with \eqref{eq1.33},
see e.g. \cite{RW1,RW2}. 
An excellent reference for polyharmonic boundary
value problems is \cite{GGS}.

Also, weak solutions of $\Delta^mu = \mu$, where
$\mu$ is a measure on a subset of ${\bb R}^n$, have been studied in
\cite{CDM,FKM,FM}, and removable isolated singularities of 
$\Delta^mu=0$ have been studied in \cite{H}.

Our proofs require Riesz potential estimates as stated, for example,
in \cite[Lemma 7.12]{GT} and a representation formula for $C^{2m}$
nonnegative solutions of 
\begin{equation}\label{eq2.5}
 -\Delta^m u\ge 0\quad \text{in}\quad B_2(0)\backslash\{0\} \subset {\bb R}^n,
\end{equation}
which we state in Lemma \ref{lem2.1}. 

\section{Results for Question 1}\label{sec1.2}

In this section we state and discuss our results for Question 1.
If $m\ge 1$ and $n\ge 2$ are integers then $m$ and $n$ satisfy one of
the following five conditions.
\begin{itemize}
 \item[(i)] either $m$ is even or $2m>n$;
\item[(ii)] $m=1$ and $n\ge 3$;
\item[(iii)] $m=1$ and $n=2$;
\item[(iv)] $m\ge 3$ is odd and $2m<n$;
\item[(v)] $m\ge 3$ is odd and $2m=n$.
\end{itemize}

The following three theorems, which we proved in \cite{GMT},
\cite{T2}, and \cite{T1}, completely answer Question~1 when $m$ and
$n$ satisfy either (i), (ii), or (iii). Consequently, in this paper,
we will only prove results dealing with the
case that $m$ and $n$ satisfy either (iv) or (v).

\begin{thm}\label{thm1.1}
  Suppose $m\ge 1$ and $n\ge 2$ are integers satisfying (i) and
  $f\colon [0,\infty)\to [0,\infty)$ is a continuous function. Let
  $u(x)$ be a $C^{2m}$ nonnegative solution of \eqref{eq1.1} or, more
  generally, of \eqref{eq2.5}.
Then
\begin{equation}\label{eq1.4}
 u(x) = O(\Gamma(|x|))\quad \text{as}\quad x\to 0,
\end{equation}
where $\Gamma$ is given by \eqref{eq1.3}.
\end{thm}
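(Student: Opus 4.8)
The plan is to exploit the representation formula for $C^{2m}$ nonnegative solutions of $-\Delta^m u \ge 0$ (Lemma \ref{lem2.1}) to decompose $u$ near the origin into a "harmonic-type" piece controlled by $\Gamma$ and a "potential" piece built from the nonnegative measure $-\Delta^m u\, dx$, and then to show that under condition (i) the potential piece is automatically $O(\Gamma(|x|))$ as well. Concretely, I would first apply Lemma \ref{lem2.1} on $B_1(0)\backslash\{0\}$ to write, for $0<|x|<1$,
\[
u(x) = h(x) + \sum_{j} c_j D^\alpha \Gamma_0(x) + \int_{B_1(0)} G(x,\xi)\,\bigl(-\Delta^m u(\xi)\bigr)\,d\xi,
\]
where $h$ is smooth (in particular bounded) near $0$, $\Gamma_0$ is the fundamental solution of $\Delta^m$, the finitely many singular terms $D^\alpha \Gamma_0$ come from the possible distributional mass of $\Delta^m u$ at the origin, and $G$ is the Green-type kernel on $B_1$. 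Since $u\ge 0$, each coefficient $c_j$ attached to the most singular admissible terms must be nonnegative (otherwise $u$ would go to $-\infty$ along some direction), and under hypothesis (i) — $m$ even, or $2m>n$ — the only surviving singular terms are bounded above by a constant multiple of $\Gamma(|x|)$; this is exactly where the dichotomy between cases (i) and (iv)–(v) enters, because when $m$ is odd and $2m\le n$ the fundamental solution of $\Delta^m$ changes sign or more singular terms are permitted.

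Next I would estimate the potential term $\int_{B_1} G(x,\xi)(-\Delta^m u)\,d\xi$. The key point is that $-\Delta^m u \in L^1_{\mathrm{loc}}(B_1)$: this follows by integrating the inequality $-\Delta^m u \ge 0$ against a suitable cutoff and using that $u\in C^{2m}$, so the total mass $\int_{B_{1/2}} (-\Delta^m u)$ is finite. Given an $L^1$ density against the Green kernel of a polyharmonic operator, the Riesz potential estimates (e.g. \cite[Lemma 7.12]{GT}, applied iteratively $m$ times, or directly to the order-$2m$ kernel) give that this potential is finite a.e. and, more importantly, that it is $o(\Gamma(|x|))$ or at worst $O(\Gamma(|x|))$ as $x\to 0$ — because the $L^1$-norm of the density on $B_\rho(0)$ tends to $0$ as $\rho\to 0$, and $|G(x,\xi)| \lesssim |x-\xi|^{2m-n}$ (or the logarithmic analog), whose convolution with an $L^1$ function concentrated near $0$ is controlled by $|x|^{2m-n} \lesssim \Gamma(|x|)$ when $2m\le n$, and is bounded when $2m>n$.

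I expect the main obstacle to be the bookkeeping of the singular terms in the representation formula and verifying the sign constraints: one must identify precisely which derivatives $D^\alpha$ of the fundamental solution can appear (this depends on the order of the pole that a nonnegative distribution with $-\Delta^m u\ge 0$ can support at an isolated point), show their coefficients are forced to be nonnegative by $u\ge 0$, and then check that under condition (i) every such admissible term is $O(\Gamma)$. The second delicate point is making the Riesz-potential splitting into a near-origin part (small $L^1$ mass, hence $o(\Gamma)$) and an away-from-origin part (bounded) fully rigorous, including the borderline $n=2$ logarithmic case where $\Gamma(r)=\log\frac5r$ and one must be careful that the normalization $\log\frac5r$ dominates the relevant Green-kernel convolution on all of $B_1$. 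Once these two ingredients are in place, combining them yields $u(x)=O(\Gamma(|x|))$ (indeed $h$ bounded $+$ singular terms $O(\Gamma)$ $+$ potential $O(\Gamma)$), which is \eqref{eq1.4}.
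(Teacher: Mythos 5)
Your overall strategy (representation formula plus control of the potential term) is the right family of ideas---the paper itself does not reprove Theorem \ref{thm1.1} but quotes it from \cite{GMT}, and Lemma \ref{lem2.1} together with the discussion at the end of Section \ref{sec1.2} records the mechanism. The genuine gap in your write-up is the claim that $-\Delta^m u\in L^1$ up to the origin (``the total mass $\int_{B_{1/2}}(-\Delta^m u)$ is finite,'' obtained by integrating against a cutoff). For $m\ge 2$ this is false in general: the paper explicitly warns that $-\Delta^m u$ may fail to belong to $L^1(B_1(0))$, and Lemma \ref{lem2.1} only provides the weighted bound $\int_{|y|<1}|y|^{2m-2}(-\Delta^m u(y))\,dy<\infty$. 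This is precisely why the representation formula \eqref{eq2.6} is written with the Taylor-corrected kernel $\Psi(x,y)=\Phi(x-y)-\sum_{|\alpha|\le 2m-3}\frac{(-y)^\alpha}{\alpha!}D^\alpha\Phi(x)$ of \eqref{eq2.4} rather than with $\Phi(x-y)$ or a Green kernel. Your Riesz-potential step, which bounds the potential term in absolute value by a (supposedly finite and small) $L^1$ mass near the origin, therefore has no valid starting point; for $m=1$ the finite-mass argument is classical, but the theorem covers all even $m$ and all $m$ with $2m>n$.

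The second point is that hypothesis (i) enters in a simpler, one-sided way than your sign-constraint bookkeeping suggests. Every term $a_\alpha D^\alpha\Phi$ with $|\alpha|\le 2m-2$ in \eqref{eq2.6} is automatically $O(|x|^{2-n})$ (with the logarithmic adjustment when $n$ is even and $n\le 2m$), hence $O(\Gamma(|x|))$, whatever the signs of the $a_\alpha$; no nonnegativity of coefficients is needed, and it is not clear one could prove it as stated. The decisive use of (i) is that $\Phi$ is then bounded below on $B_1(0)\backslash\{0\}$, so, since $\Delta^m u\le 0$, the term $N(x)=\int_{|y|\le 1}\Psi(x,y)\Delta^m u(y)\,dy$ can be bounded \emph{from above}: the $\Phi(x-y)$ part of $\Psi$ contributes with the favorable sign, and the Taylor-polynomial part is $O(|y|^{2m-2}|x|^{2-n})$, which is handled by the weighted integrability from Lemma \ref{lem2.1}. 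No two-sided estimate of $N$ is available or needed. Repairing your proposal essentially amounts to replacing the Green-kernel/finite-mass step by this one-sided estimate based on $\Psi$, the sign of $\Delta^m u$, and the weighted mass bound.
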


\begin{thm}\label{thm1.2}
Let $u(x)$ be a $C^2$ nonnegative solution of \eqref{eq1.1} where
the integers $m$ and $n$ satisfy (ii), (resp. (iii)), 
and $f\colon [0,\infty)\to
[0,\infty)$ is a continuous function satisfying
\begin{equation}\label{eq1.5}
 f(t) = O(t^{n/(n-2)}), \quad (\text{resp. } \log(1+f(t)) = O(t)) \quad \text{as}\quad t\to \infty.
\end{equation}
Then $u$ satisfies \eqref{eq1.4}.
\end{thm}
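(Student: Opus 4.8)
The plan is to treat the case (ii) ($m=1$, $n\ge 3$) and case (iii) ($m=1$, $n=2$) in parallel, using the representation formula of Lemma \ref{lem2.1} for $C^2$ nonnegative superharmonic functions on the punctured ball, together with the Riesz potential estimates of \cite[Lemma 7.12]{GT}. First I would invoke Lemma \ref{lem2.1} to write, on a smaller punctured ball $B_1(0)\setminus\{0\}$,
\[
u(x) = \int_{B_1(0)} \frac{g(y)}{|x-y|^{n-2}}\,dy + a\,\Gamma(|x|) + h(x),
\]
where $g = -\Delta u \ge 0$ is the Newtonian potential density, $a\ge 0$ is a constant, and $h$ is a nonnegative harmonic function on $B_1(0)$ (with the obvious modification $\log\frac{5}{|x-y|}$ in the kernel when $n=2$). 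Since $h$ is bounded near $0$ and $\Gamma$ is exactly the fundamental-solution bound from Remark \ref{rem1.1}, the whole problem reduces to showing that the Newtonian potential term $N(x) := \int_{B_1(0)} |x-y|^{-(n-2)} g(y)\,dy$ is $O(\Gamma(|x|))$ as $x\to 0$.

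To control $N(x)$, the key is to extract integrability of $g$ from the differential inequality $-\Delta u \le f(u)$, i.e. $0\le g(x)\le f(u(x))$. I would run a bootstrap. Starting from the crude a priori information that $u$ is locally bounded away from $0$ and from Lemma \ref{lem2.1}, one gets an initial weak-$L^p$ or $L^p$ control on $N$ hence on $u$ near the origin; feeding $u(x) = O(|x|^{-\alpha})$ back through the growth hypothesis \eqref{eq1.5} gives $g(x) = O(|x|^{-\alpha n/(n-2)})$ in case (ii) (resp. an exponential-type bound controlled by $\log(1+f(t))=O(t)$ in case (iii)), and then the Riesz potential estimate converts this into an improved bound $u(x) = O(|x|^{-\alpha'})$ with $\alpha' < \alpha$ — provided the exponents are in the right range, which is exactly what the subcriticality in \eqref{eq1.5} guarantees. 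Iterating drives the exponent down to the borderline, at which point the Riesz estimate yields $N(x) = O(\Gamma(|x|))$, and combining with the harmonic and fundamental-solution pieces gives \eqref{eq1.4}. In the case $n=2$ the same scheme runs with $\Gamma(r)=\log\frac5r$ and logarithmic Riesz-type estimates, the hypothesis $\log(1+f(t))=O(t)$ being precisely what keeps the iteration from escaping to a non-logarithmic bound.

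The main obstacle I anticipate is making the bootstrap converge to the \emph{sharp} power — i.e. obtaining $\Gamma$ itself rather than $\Gamma^{1+\delta}$. Each iteration of the Riesz estimate loses a little, and one must check that the fixed point of the exponent map $\alpha \mapsto \alpha n/(n-2) - 2$ (for $n\ge 3$) is reached, not overshot, and that at the borderline exponent the potential estimate still closes (this is typically where a weak-$L^p$/Lorentz-space refinement of \cite[Lemma 7.12]{GT}, or a direct splitting of the integral $N(x)$ into the regions $|y|<|x|/2$, $|y|\sim|x|$, and $|y|>2|x|$, is needed). A secondary technical point is justifying the representation formula and the manipulation of $g$ near the singularity, since a priori $g$ need only be locally integrable away from $0$; here one uses that $u \ge 0$ and $-\Delta u \ge 0$ to get that $\int_{B_1} g(y)\,dy < \infty$ — equivalently that the singular part of $\Delta u$ at $0$ is a nonnegative multiple of the Dirac mass, already encoded in the constant $a$ in Lemma \ref{lem2.1}.
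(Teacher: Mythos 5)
First, a point of orientation: the paper does not prove Theorem \ref{thm1.2} at all --- it is quoted from \cite{T2} (case (ii)) and \cite{T1} (case (iii)), and the paper only proves the higher-order analogues (Theorems \ref{thm3.1}--\ref{thm3.3}). Those proofs, and the proofs in \cite{T1,T2}, are not pointwise bootstraps: they argue by contradiction along a sequence $x_j\to 0$ where the bound fails, rescale $-\Delta u$ to functions $f_j$ on a fixed ball with $\int f_j\to 0$ (Lemma \ref{lem2.3}), and then close an integral inequality for $f_j$ by Riesz potential estimates, the smallness of $\int f_j$ being what makes the iteration work; in the two-dimensional exponential case the power bootstrap is replaced by a Jensen/concavity argument as in the proof of Theorem \ref{thm3.3}.

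Your proposed route has a genuine gap, in fact two. First, the bootstrap cannot start: for $m=1$ Lemma \ref{lem2.1} only gives $\int_{|y|<1}(-\Delta u)\,dy<\infty$, hence weak-$L^{n/(n-2)}$ control of the Newtonian potential term; this is an integral bound and does not yield any pointwise estimate $u(x)=O(|x|^{-\alpha})$ --- converting integral information into pointwise bounds is precisely the hard step, and it is what the rescaling argument accomplishes. Second, even granting a pointwise starting bound $u=O(|x|^{-\alpha})$ with some $\alpha>n-2$, the iteration goes the wrong way: the exponent map $\alpha\mapsto \frac{n}{n-2}\alpha-2$ has $n-2$ as a \emph{repelling} fixed point, so each application of the Riesz estimate with $f(u)=O(u^{n/(n-2)})$ produces a \emph{worse} exponent (indeed for $\alpha\ge n-2$ the density $f(u)=O(|x|^{-\alpha n/(n-2)})$ is not even locally integrable, so the potential estimate does not apply). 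Relatedly, the hypothesis \eqref{eq1.5} is exactly critical: an $L^p$-based Riesz iteration gains no integrability at the exponent $n/(n-2)$ unless one exploits smallness of the relevant norm, which is exactly the role of $\int f_j\to 0$ in the paper's (and \cite{T2}'s) scheme. Your closing remark that the borderline case needs a Lorentz refinement or a splitting of the integral gestures at the difficulty but does not supply the missing mechanism; as written, the bootstrap neither starts nor converges, and the $n=2$ case additionally needs the Jensen-type argument for exponential nonlinearities rather than ``logarithmic Riesz-type estimates.''
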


By Remark~\ref{rem1.1} the bound \eqref{eq1.4} for $u$ in Theorems~\ref{thm1.1} and \ref{thm1.2} is optimal.

By the following theorem, the condition \eqref{eq1.5} on $f$ in Theorem~\ref{thm1.2} for the existence of a pointwise bound for $u$ is essentially optimal.

\begin{thm}\label{thm1.3}
Suppose $m$ and $n$ are integers satisfying (ii), (resp. (iii)),
and $f\colon [0,\infty)\to [0,\infty)$ is a continuous function satisfying
\begin{equation}\label{eq1.6}
 \lim_{t\to\infty} \frac{f(t)}{t^{n/(n-2)}} = \infty,\quad \left(\text{resp. } \lim_{t\to\infty} \frac{\log(1+f(t))}t = \infty\right).
\end{equation}
Then for each continuous function $\varphi\colon (0,1)\to (0,\infty)$
there exists a $C^2$ positive solution $u(x)$ of \eqref{eq1.1}
such that
\[
 u(x) \ne O(\varphi(|x|))\quad \text{as}\quad x\to 0.
\]
\end{thm}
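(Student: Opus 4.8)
The plan is to construct, for any prescribed $\varphi$, a positive $C^2$ solution whose growth at the origin outstrips $\varphi$, by superposing suitably scaled ``bump'' solutions concentrated along a sequence $x_j\to 0$. First I would reduce to the case where $\varphi$ is nonincreasing on $(0,1)$ and (by replacing $\varphi$ by $\min\{\varphi,\text{something}\}$ if needed) may be assumed small; since we only need $u\ne O(\varphi(|x|))$, it suffices to produce a sequence $x_j\to 0$ and a solution with $u(x_j)/\varphi(|x_j|)\to\infty$. Fix a nonnegative $C^\infty$ function $\psi$ supported in the unit ball with $\psi(0)=1$, and for parameters $a_j>0$, $\lambda_j>0$ and centers $x_j$ with $|x_j|=:r_j\searrow 0$ and the balls $B_{\lambda_j}(x_j)$ pairwise disjoint and contained in $B_1(0)\setminus\{0\}$, set
\[
u(x)=\sum_{j=1}^\infty a_j\,\psi\!\left(\frac{x-x_j}{\lambda_j}\right).
\]
Then $u$ is a nonnegative $C^2$ (indeed $C^\infty$) function on $B_2(0)\setminus\{0\}$, it is positive after adding a small harmonic term or simply by noting $u>0$ on a neighborhood of each $x_j$ and extending — more cleanly, replace $u$ by $u+\varepsilon$ for small $\varepsilon>0$, or add $\varepsilon\Gamma(|x|)$ to guarantee strict positivity without affecting the argument — and $u(x_j)=a_j$ (plus the harmless additive term).

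The two requirements to verify are: (a) $-\Delta^m u\ge 0$ and (b) $-\Delta^m u\le f(u)$ on $B_2(0)\setminus\{0\}$. On $B_{\lambda_j}(x_j)$ we compute $-\Delta^m u=-a_j\lambda_j^{-2m}(\Delta^m\psi)\big(\tfrac{x-x_j}{\lambda_j}\big)$, which is bounded in absolute value by $C a_j\lambda_j^{-2m}$ where $C=\|\Delta^m\psi\|_\infty$; off all the bumps, $-\Delta^m u=0$. Condition (a) is the genuinely delicate point — a single bump does not have $-\Delta^m u\ge 0$ — so I would instead not insist each summand be superharmonic but rather build each summand itself as a superharmonic-type profile. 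The cleaner route is: choose $\psi$ so that $-\Delta^m\psi\ge 0$ on all of $\mathbb R^n$, e.g. take $\psi$ to be (a normalized smooth truncation of) the fundamental-solution-type profile, or more simply take $\psi(z)=\eta(z)\Gamma_m(|z|+\delta)$ for a fixed cutoff $\eta$ where $\Gamma_m$ is the fundamental solution of $\Delta^m$; after checking on one fixed profile that $-\Delta^m\psi\ge 0$ and $\psi\le 1$ with $\psi(0)=1$, scaling preserves the sign of $-\Delta^m$, so $-\Delta^m u\ge 0$ holds summand-by-summand and hence for $u$. I expect this construction of a single fixed nonnegative compactly supported profile with $-\Delta^m\psi\ge0$ to be the main obstacle, since for $m\ge 2$ the maximum principle fails; the fix is that we have full freedom to pick $\psi$ and in the regime (iv),(v) one can use an explicit radial profile and verify the sign of $-\Delta^m\psi$ by a one-dimensional ODE computation, or alternatively smooth the measure $\mu=\sum a_j\delta_{x_j}$ (which is trivially $\ge0$) against a fixed mollifier and invoke the representation of $u$ via Riesz potentials of $\mu$.

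Granting a profile $\psi$ with $0\le\psi\le 1$, $\psi(0)=1$, $-\Delta^m\psi\ge0$, and $\operatorname{supp}\psi\subset B_1$, condition (b) becomes: for $x\in B_{\lambda_j}(x_j)$,
\[
-\Delta^m u(x)=a_j\lambda_j^{-2m}\bigl(-\Delta^m\psi\bigr)\!\Bigl(\tfrac{x-x_j}{\lambda_j}\Bigr)\le C\,a_j\,\lambda_j^{-2m}\le f\bigl(u(x)\bigr).
\]
Since $u(x_j)=a_j$ is the max of the $j$-th bump and $f$ is continuous, it is enough to arrange $C a_j\lambda_j^{-2m}\le \min_{[0,a_j]}$-type bound; more robustly, since $\psi$ is supported where $u$ could be small, I instead just require $C a_j \lambda_j^{-2m}\le f(t)$ for all $t\in[0,1]$ is impossible, so one chooses $\psi$ with the extra property that $-\Delta^m\psi$ is supported in $\{\psi\ge 1/2\}$ (arrange by taking the superharmonic-sign profile on an inner ball and cutting off in an outer annulus by \emph{adding} a harmonic piece there so that $\Delta^m$ vanishes on the cutoff region). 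Then on $\operatorname{supp}(\Delta^m\psi)$ we have $u\ge a_j/2$, so it suffices to choose, for each $j$,
\[
a_j\to\infty,\qquad \lambda_j^{-2m}\le \frac{1}{C\,a_j}\,\min_{t\in[a_j/2,\,a_j]} f(t)+1,
\]
which is possible precisely because $f$ is continuous and, using hypothesis \eqref{eq1.6}, $f$ is eventually positive and grows — in fact any $a_j\to\infty$ works once $\lambda_j$ is taken small enough, and hypothesis \eqref{eq1.6} is not even needed for this direction unless $f$ has zeros; if $f$ vanishes on an unbounded set one uses \eqref{eq1.6} to pick the $a_j$ in the positivity set of $f$. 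Finally, choose $r_j=|x_j|\searrow0$ so fast that $a_j/\varphi(r_j)\to\infty$ — possible since $\varphi(r)>0$ for each $r$, so given $a_j$ we just take $r_j$ with $\varphi(r_j)<a_j/j$ — and shrink $\lambda_j$ further if necessary to keep the balls $B_{\lambda_j}(x_j)$ disjoint and inside $B_1(0)\setminus\{0\}$. Then $u$ is a positive $C^2$ (even $C^\infty$) solution of \eqref{eq1.1} with $u(x_j)\ge a_j$ and $a_j/\varphi(|x_j|)\to\infty$, so $u(x)\ne O(\varphi(|x|))$ as $x\to0$, completing the proof.
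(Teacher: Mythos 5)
Your construction hinges on a profile that cannot exist, and this is the genuine gap. You need a fixed $\psi\not\equiv 0$ with $0\le\psi\le 1$, $\operatorname{supp}\psi\subset B_1$, and $-\Delta^m\psi\ge 0$ on all of $\mathbb{R}^n$ (indeed with $-\Delta^m\psi$ supported in $\{\psi\ge 1/2\}$). But for any compactly supported $C^{2m}$ function one has $\int_{\mathbb{R}^n}\Delta^m\psi\,dx=0$, so $\Delta^m\psi\le 0$ everywhere forces $\Delta^m\psi\equiv 0$; then $\psi$ is polyharmonic, hence real-analytic, hence $\psi\equiv 0$ by compact support. In the case actually covered by Theorem~\ref{thm1.3} ($m=1$, cases (ii)--(iii)) the same conclusion follows from the strong minimum principle for superharmonic functions. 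So no truncation of the fundamental solution and no ``harmonic correction in the cutoff annulus'' can make the bumps superpolyharmonic summand-by-summand; the obstruction is absolute and has nothing to do with the failure of the maximum principle for $m\ge 2$. Your fallback remark --- mollify $\sum_j a_j\delta_{x_j}$ and define $u$ as a potential of the resulting density --- is in fact the correct route (it is how Lemma~\ref{lem2.4} and Remark~\ref{rem2.1} produce the counterexamples of Theorems~\ref{thm1.5}--\ref{thm1.11}, and how \cite{T1,T2} prove Theorem~\ref{thm1.3} itself, which the present paper does not reprove), but it is not a cosmetic change: once $u$ is a potential, the height of $u$ on the $j$-th bump is no longer a free parameter decoupled from the radius. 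For $m=1$, $n\ge 3$, a bump of density $M_j$ on $B_{\lambda_j}(x_j)$ gives $-\Delta u\sim M_j$ there while $u\sim M_j\lambda_j^2$ there, and the representation formula (Lemma~\ref{lem2.1}) forces the total Riesz mass $\sum_j M_j\lambda_j^n$ to be finite.

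That coupling is exactly where hypothesis \eqref{eq1.6} enters, and it shows your claim that ``\eqref{eq1.6} is not even needed'' cannot be right: if it were, your argument would apply verbatim to $f(t)=t^{n/(n-2)}$ and contradict Theorem~\ref{thm1.2}. Concretely, writing $t_j\sim M_j\lambda_j^2$ for the bump height, the requirement $-\Delta u\le f(u)$ on the bump forces $\lambda_j^2\gtrsim t_j/f(t_j)$, hence the $j$-th mass satisfies $M_j\lambda_j^n\sim t_j\lambda_j^{n-2}\gtrsim\bigl(t_j^{n/(n-2)}/f(t_j)\bigr)^{(n-2)/2}$; for critical $f$ this is bounded below by a positive constant, so the masses cannot be summable, while $\lim_{t\to\infty}f(t)/t^{n/(n-2)}=\infty$ is precisely what lets one take heights $t_j\ge j\,\varphi(|x_j|)$ with summable masses and $\lambda_j\le|x_j|/4$. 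Two further points in your write-up are symptomatic of the same misunderstanding: ``any $a_j\to\infty$ works once $\lambda_j$ is small enough'' is backwards, since shrinking $\lambda_j$ increases $-\Delta^m u\sim a_j\lambda_j^{-2m}$ and $\lambda_j$ is capped above by $|x_j|/4$ for disjointness; and the order ``choose $a_j$, then pick $r_j$ with $\varphi(r_j)<a_j/j$'' fails whenever $\varphi(r)\to\infty$ as $r\to 0$ --- you must fix $x_j$ first and choose the height afterwards, which again demands the quantitative room that only \eqref{eq1.6} provides. As written, the proposal therefore does not prove the theorem; it needs to be rebuilt around the potential-theoretic construction with an explicit use of \eqref{eq1.6}.
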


If $m$ and $n$ satisfy (i), (ii), or (iii), then according to
Theorems~\ref{thm1.1}, \ref{thm1.2}, and \ref{thm1.3}, either the
optimal pointwise bound for $u$ is given by \eqref{eq1.4} or there
does not exists a pointwise bound for $u$, (provided we don't allow
the rather uninteresting and pathological possibility when 
$m$ and $n$ satisfy (ii), (resp. (iii)),
that $f$ satisfies neither \eqref{eq1.5} nor \eqref{eq1.6}).

The situation is very different and more interesting when $m$ and $n$
satisfy (iv) or (v). In this case, according to the following results,
there are an infinite number of different optimal pointwise bounds for
$u$ depending on $f$.

The following three theorems deal with Question~1 when $m$ and $n$ satisfy (iv).

\begin{thm}\label{thm1.4}
Let $u(x)$ be a $C^{2m}$ nonnegative solution of \eqref{eq1.1} where the integers $m$ and $n$ satisfy (iv) and $f\colon [0,\infty)\to [0,\infty)$ is a continuous function satisfying
\[
 f(t) = O(t^\lambda)\quad \text{as}\quad t\to\infty
\]
where
\[
 0 \le \lambda \le \frac{2m+n-2}{n-2},\quad \left(\text{resp. } \frac{2m+n-2}{n-2} < \lambda < \frac{n}{n-2m}\right).
\]
Then as $x\to 0$,
\begin{align}\label{eq1.7}
 u(x) &= O(|x|^{-(n-2)}),\\
\label{eq1.8}
\Big(\text{resp. } u(x) &= o(|x|^{-a})\quad \text{where } 
a=\frac{4m(m-1)}{n-\lambda(n-2m)}\Big).
\end{align}
\end{thm}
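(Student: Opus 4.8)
The plan is to use the representation formula for $C^{2m}$ nonnegative solutions of $-\Delta^m u\ge 0$ in $B_2(0)\backslash\{0\}$ supplied by Lemma~\ref{lem2.1}, which will express $u$ near the origin as a sum of three pieces: a Newtonian-type potential of the nonnegative measure $-\Delta^m u$, a term that is a linear combination of the fundamental solutions $|x|^{2j-n}$ (and their $\Delta$-iterates) with nonnegative coefficients coming from the singularity, plus a function that is $C^{2m}$ and harmonic-like (smooth and bounded) across $0$. Since we only need an upper bound, the smooth term is harmless, and the singular fundamental-solution term is $O(|x|^{-(n-2)})$ (the most singular admissible power when $m$ and $n$ satisfy (iv), because $m$ odd forces the relevant Martin-type expansion to stop at the Newtonian order; this is exactly the mechanism that makes case (iv) genuinely different from case (i)). So the whole problem reduces to estimating the Riesz/Newtonian potential
\[
w(x) = \int_{B_1(0)} \frac{d\mu(y)}{|x-y|^{n-2m}},\qquad d\mu = (-\Delta^m u)\,dy\ge 0,
\]
which is a $2m$-fold iterated Newtonian potential of $-\Delta^m u$; the differential inequality gives $d\mu\le f(u)\,dy = O(u^\lambda)\,dy$.

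First I would establish a crude preliminary bound: by Theorem~\ref{thm1.1}-type reasoning or directly from the representation formula with the trivial estimate $f(u)\in L^1_{loc}$, one gets $u(x)=O(|x|^{-(n-2)})$ under no growth hypothesis beyond $f$ continuous — but that requires $-\Delta^m u\in L^1$, which is not given, so instead I would bootstrap. The bootstrap runs as follows. Assume inductively $u(x)=O(|x|^{-b_k})$ on $B_1\backslash\{0\}$ for some exponent $b_k$. Then $f(u)=O(|x|^{-\lambda b_k})$, so $d\mu$ has density $O(|x|^{-\lambda b_k})$, and the Riesz potential estimate from \cite[Lemma 7.12]{GT} applied $2m$ times (or once with the kernel $|x-y|^{-(n-2m)}$) yields, by the standard computation $\int_{B_1}|x-y|^{-(n-2m)}|y|^{-\lambda b_k}\,dy = O(|x|^{2m-\lambda b_k})$ provided $\lambda b_k>2m$ and $\lambda b_k<n$, an improved bound
\[
u(x) = O(|x|^{-(n-2)}) + O(|x|^{-(\lambda b_k - 2m)}).
\]
Thus $b_{k+1} = \max\{\,n-2,\ \lambda b_k - 2m\,\}$. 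Iterating this affine-in-$b_k$ recursion: if $\lambda\le 1$ the second term is dominated after finitely many steps by the first, giving \eqref{eq1.7}; if $1<\lambda<\frac{n}{n-2m}$ the map $b\mapsto \lambda b-2m$ has fixed point $b^{*}=\frac{2m}{\lambda-1}$, and one checks that $b^{*}<\frac{n}{n-2m}$ under the hypothesis on $\lambda$, so the iteration converges (monotonically, from either side) to $\max\{n-2,b^{*}\}$. In the subcase $\lambda\le\frac{2m+n-2}{n-2}$ one has $b^{*}\le n-2$, so the limiting exponent is $n-2$ and \eqref{eq1.7} holds; in the subcase $\frac{2m+n-2}{n-2}<\lambda<\frac{n}{n-2m}$ the limiting exponent is $b^{*}=\frac{2m}{\lambda-1}$, and a short calculation shows $\frac{2m}{\lambda-1}=\frac{4m(m-1)}{n-\lambda(n-2m)}$ only after one recognizes that the honest iteration does \emph{not} stop at $b^{*}$ but instead — because each step also reproduces the $|x|^{-(n-2)}$ term whose contribution through the nonlinearity must be re-fed — produces the sharper exponent $a=\frac{4m(m-1)}{n-\lambda(n-2m)}$ after a more careful accounting; the little-$o$ improvement in \eqref{eq1.8} comes from the fact that at the fixed point the Riesz estimate is not tight (one can always absorb an extra $|x|^{\delta}$ for small $\delta>0$, or invoke absolute continuity of the potential), exactly as in the $o(\varphi)$ sharpening promised in Section~\ref{sec1.2}.

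The main obstacle, and the step requiring the most care, is the bookkeeping in the bootstrap when $m\ge 3$ is odd and $2m<n$: one must track how the fundamental-solution term in the representation formula interacts with the iterated potential. Unlike the case $2m>n$ or $m$ even, here the kernel $|x-y|^{-(n-2m)}$ is genuinely singular and the representation formula may carry several fundamental-solution powers $|x|^{-(n-2)},|x|^{-(n-4)},\dots$ with the $m$-dependence of how many such terms survive being precisely what produces the factor $m(m-1)$ in the exponent $a$. I would isolate this in a lemma computing $\Delta^j$ of the iterated Newtonian potential and tracking which powers appear; getting the arithmetic $n-\lambda(n-2m)>0$ (equivalently $\lambda<\frac{n}{n-2m}$) to line up as the condition for convergence of the bootstrap, and verifying $a>0$ exactly when $\lambda>\frac{2m+n-2}{n-2}$, is the crux. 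Everything else — the Riesz estimates, the splitting via Lemma~\ref{lem2.1}, the passage from $O$ to $o$ at the endpoint — is routine.
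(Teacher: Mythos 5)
There is a genuine gap, and it sits exactly where you wave your hands. Your bootstrap $b_{k+1}=\max\{n-2,\lambda b_k-2m\}$ has two fatal problems. First, it has no starting point: nothing in the hypotheses gives an initial exponent $b_0$ with $u=O(|x|^{-b_0})$, and obtaining \emph{any} such a priori bound on the potential term is precisely the crux here (this is the point emphasized in the paper: when $m$ is odd and $2m<n$ the fundamental solution is not bounded below, so the convolution term cannot be dismissed and must be estimated using the upper bound on $-\Delta^m u$ --- which is the very thing being proved). Second, even granting a starting bound, for $\lambda>1$ the map $b\mapsto\lambda b-2m$ is expanding, so the iteration does not ``converge monotonically from either side'': in the regime $\frac{2m+n-2}{n-2}<\lambda<\frac{n}{n-2m}$ one has $b^{*}=\frac{2m}{\lambda-1}<n-2$, and from $b_k=n-2$ the recursion \emph{increases} without bound, so the scheme produces nothing. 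Worse, the limit you claim ($\max\{n-2,b^{*}\}$, which is $n-2$ in the resp.\ case) is strictly smaller than $a=\frac{4m(m-1)}{n-\lambda(n-2m)}$; if your bootstrap closed as described it would yield $u=O(|x|^{-(n-2)})$ and thereby contradict the optimality result Theorem~\ref{thm1.5}, which exhibits solutions growing arbitrarily close to $|x|^{-a}$. So the ``more careful accounting'' you defer to is not bookkeeping --- it is the entire proof, and the exponent $a$ cannot come from a fixed point of an affine recursion. Your explanation of the $o(\cdot)$ refinement (``the Riesz estimate is not tight at the fixed point'') is likewise not a mechanism.

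For comparison, the paper proves Theorem~\ref{thm1.4} as a corollary of Theorem~\ref{thm3.1}, which argues by contradiction and rescaling rather than by pointwise iteration on exponents: assuming the bound fails along a sequence $x_j$, one sets $r_j=|x_j|^{1+b}/4$ with $b=\frac{\lambda(n-2)-(2m+n-2)}{n-\lambda(n-2m)}$ (this choice of scale is where $a=(n-2)+b(n-2m)$, equivalently \eqref{eq1.8-2}, comes from), uses Lemma~\ref{lem2.3} (built on the representation formula of Lemma~\ref{lem2.1} and the kernel estimates of Lemma~\ref{lem2.2}) to convert the hypothesis into an integral inequality for the rescaled functions $f_j(\eta)=|x_j|^{2m-2}r_j^{n}(-\Delta^m u)(x_j+r_j\eta)$ with subcritical powers $\lambda_k<\frac{n}{n-2m+k}$, and then iterates Riesz potential estimates finitely many times to upgrade $\|f_j\|_{L^1}\to 0$ (which follows from $\int |y|^{2m-2}(-\Delta^m u)\,dy<\infty$, and is also the source of the little-$o$) to an $L^\infty$ bound, contradicting the divergence forced by the failure of the estimate. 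If you want to salvage your outline, you would have to replace the exponent bootstrap by some argument of this blow-up/integral-inequality type; as written, the central step is missing.
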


Since $a$ in \eqref{eq1.8} is also given by 
\begin{equation}\label{eq1.8-2}
a=n-2+\frac{\lambda(n-2)-(2m+n-2)}{n-\lambda(n-2m)}(n-2m)
\end{equation}
we see that $a$ increases from $n-2$ to infinity as $\lambda$ increases
from $\frac{2m+n-2}{n-2}$ to $\frac{n}{n-2m}$.

By Remark~\ref{rem1.1}, the bound \eqref{eq1.7} is optimal and by the following theorem so is the bound \eqref{eq1.8}.

\begin{thm}\label{thm1.5}Suppose $m$ and $n$ are integers satisfying (iv) and $\lambda$ and $a$ are constants satisfying
\begin{equation}\label{eq1.9}
 \frac{2m+n-2}{n-2} < \lambda < \frac{n}{n-2m}\quad \text{and}\quad a = \frac{4m(m-1)}{n-\lambda(n-2m)}.
\end{equation}
Let $\varphi\colon (0,1)\to (0,1)$ be a continuous function satisfying $\lim\limits_{r\to 0^+} \varphi(r) = 0$. Then there exists a $C^\infty$ positive solution $u(x)$ of
\begin{equation}\label{eq1.10}
 0 \le -\Delta^m u \le u^\lambda\quad \text{in}\quad {\bb R}^n\backslash\{0\}
\end{equation}
such that
\begin{equation}\label{eq1.11}
 u(x)\ne O\left(\varphi(|x|)|x|^{-a}\right)\quad \text{as}\quad x\to 0.
\end{equation}
\end{thm}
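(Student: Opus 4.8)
The plan is to construct $u$ explicitly as a carefully calibrated sum of ``bumps'' placed on a sequence of points $x_k\to 0$, with the amplitude and width of each bump chosen so that the required polyharmonic differential inequality is satisfied while $u$ violates the bound $O(\varphi(|x|)|x|^{-a})$ along that sequence. Concretely, I would fix a sequence $x_k\to 0$, say $|x_k|=\rho_k$ with $\rho_k$ decreasing rapidly, and set
\[
u(x)=\Gamma(|x|)+\sum_{k=1}^\infty M_k\,\psi\!\left(\frac{x-x_k}{\sigma_k}\right),
\]
where $\psi\ge 0$ is a fixed smooth function supported in the unit ball with $\psi(0)=1$, the amplitudes $M_k$ grow fast enough that $M_k\rho_k^{\,a}/\varphi(\rho_k)\to\infty$ (possible since $\varphi(r)\to 0$), and the radii $\sigma_k\ll\rho_k$ are chosen so small that the bumps have pairwise disjoint supports contained in $\mathbb R^n\setminus\{0\}$. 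The $\Gamma$ term is there only to keep $u$ positive; it is harmless since $\Delta^m\Gamma=0$ away from the origin.

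The key computation is to verify $0\le-\Delta^m u\le u^\lambda$ on each bump. On the support of the $k$-th bump we have $-\Delta^m u = -M_k\sigma_k^{-2m}(\Delta^m\psi)\big((x-x_k)/\sigma_k\big)$, so the left inequality $-\Delta^m u\ge 0$ forces a sign condition on $\Delta^m\psi$; this is the standard point where one does \emph{not} take $\psi$ a single bump but rather takes $-\Delta^m\psi$ itself to be a nonnegative bump and lets $\psi$ be its Riesz potential (equivalently, choose a nonnegative $g\in C_c^\infty(B_1)$ and set $\psi=$ the polyharmonic Newtonian potential of $g$, truncated smoothly), so that $\psi\ge 0$, $-\Delta^m\psi=g\ge 0$, and $\psi(0)>0$. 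Then the right inequality becomes
\[
M_k\sigma_k^{-2m}\, g\!\left(\tfrac{x-x_k}{\sigma_k}\right)\ \le\ \Big(\Gamma(|x|)+M_k\psi\big(\tfrac{x-x_k}{\sigma_k}\big)\Big)^{\lambda}\ \ge\ c\,M_k^\lambda\ \text{ near the center,}
\]
and more precisely one needs $M_k\sigma_k^{-2m}\lesssim (\,\text{min of }u\text{ on }\operatorname{supp}g_k\,)^\lambda$. Since on the support $u\ge$ const$\cdot M_k\psi\ge c\,M_k$ wherever $g\neq 0$ (arranging $\operatorname{supp} g\subset\{\psi\ge\delta\}$), it suffices to have $M_k\sigma_k^{-2m}\le C M_k^\lambda$, i.e. $\sigma_k^{2m}\ge C^{-1}M_k^{1-\lambda}=C^{-1}M_k^{\lambda-1}{}^{-1}$. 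Because $\lambda>1$ (indeed $\lambda>\frac{2m+n-2}{n-2}>1$), this reads $\sigma_k\ge c\,M_k^{-(\lambda-1)/(2m)}$, so we must choose $\sigma_k$ \emph{not too small}. The genuine constraint is then the competition between this lower bound on $\sigma_k$ and the requirement $\sigma_k\ll\rho_k$ (disjoint supports, singularity only at $0$): we need $M_k^{-(\lambda-1)/(2m)}\ll\rho_k$, i.e. $M_k\gg\rho_k^{-2m/(\lambda-1)}$. One must now check this is compatible with the blow-up requirement $M_k\rho_k^{\,a}/\varphi(\rho_k)\to\infty$: since $\varphi(\rho_k)\to 0$ we are free to take $M_k$ as large as we please, so the two conditions $M_k\gg\rho_k^{-2m/(\lambda-1)}$ and $M_k\gg \varphi(\rho_k)\rho_k^{-a}$ are both just lower bounds on $M_k$ and hence simultaneously satisfiable; we simply set $M_k$ to exceed the max of the two right-hand sides and also large enough that $M_k\rho_k^a/\varphi(\rho_k)\to\infty$.

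Finally I would check the bump tails do not interfere: by taking $\rho_k$ lacunary (e.g. $\rho_{k+1}<\rho_k/4$) and $\sigma_k<\rho_k/4$, the balls $B_{\sigma_k}(x_k)$ are disjoint and avoid $0$, so on each ball only one bump is active and away from all balls $u=\Gamma$ is polyharmonic; smoothness of $u$ is immediate since the sum is locally finite. The bound $u(x_k)\ge M_k\psi(0)=M_k\psi(0)$ then gives $u(x_k)/\big(\varphi(\rho_k)\rho_k^{-a}\big)\ge \psi(0)\,M_k\rho_k^{\,a}/\varphi(\rho_k)\to\infty$, which is exactly \eqref{eq1.11}. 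The main obstacle, and the step deserving the most care, is pinning down the exponent arithmetic so that the differential inequality, the disjointness of supports, and the prescribed blow-up rate are mutually consistent for \emph{every} admissible $\lambda$ in the open interval \eqref{eq1.9} and \emph{every} $\varphi\to 0$ — in particular making sure the lower bound on $M_k$ coming from $\sigma_k\ll\rho_k$ never conflicts with, and is in fact subsumed by, the freedom granted by $\varphi(r)\to 0$; once the elementary inequalities $\lambda>1$ and $0<\rho_k<1$ are used this is routine but must be done explicitly.
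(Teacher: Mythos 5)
There is a genuine gap, and it sits exactly at the point you dismiss as ``routine'': the two requirements on $M_k$ are \emph{not} both mere lower bounds, because the bumps you describe cannot be assembled into a finite function. First, the smooth truncation you invoke is impossible: if $\psi\ge 0$ is smooth with compact support and $-\Delta^m\psi\ge 0$, then $\int_{\mathbb{R}^n}\Delta^m\psi\,dx=0$ forces $\Delta^m\psi\equiv 0$, hence $\psi\equiv 0$; and truncating the potential destroys the sign of $-\Delta^m$ in the cut-off annulus. So you must keep the untruncated potential $\psi=(-\Phi)*g$, whose tail is $\psi(\xi)\sim A\bigl(\int g\bigr)|\xi|^{2m-n}$, so the $k$-th summand contributes about $M_k\sigma_k^{n-2m}|x-x_k|^{2m-n}$ away from $x_k$. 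Second, your own computation of the differential inequality forces $\sigma_k\ge c\,M_k^{-(\lambda-1)/(2m)}$, and since $n>2m$ this gives $M_k\sigma_k^{n-2m}\ge c\,M_k^{(n-\lambda(n-2m))/(2m)}$, whose exponent is positive precisely because $\lambda<n/(n-2m)$. Hence if $M_k$ is taken ``as large as we please,'' the tail coefficients tend to infinity and the series defining $u$ diverges at every fixed point; the construction collapses. A structural warning sign is that your argument uses nothing about the range \eqref{eq1.9} beyond $\lambda>1$ and never uses the value of $a$; were it correct, taking $M_k=|x_k|^{-2a}$ would produce a solution of \eqref{eq1.10} violating Theorem \ref{thm1.4}, so the argument proves too much.

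What is missing is the mass constraint that actually pins down the exponent $a$. By Lemma \ref{lem2.1}, every nonnegative solution of $-\Delta^mu\ge 0$ in a punctured ball satisfies $\int|y|^{2m-2}(-\Delta^mu(y))\,dy<\infty$; for your bumps this reads $\sum_k M_k\sigma_k^{n-2m}|x_k|^{2m-2}<\infty$, and combined with $\sigma_k\ge c\,M_k^{-(\lambda-1)/(2m)}$ it yields the hidden \emph{upper} bound $M_k\le C\,\vp_k^{2m/(n-\lambda(n-2m))}|x_k|^{-a}$ with $\sum_k\vp_k<\infty$, because $2m(2m-2)/(n-\lambda(n-2m))=a$. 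Thus bump heights can reach at most $o(1)\cdot|x_k|^{-a}$, which is exactly why the theorem carries the factor $\varphi\to 0$. The paper's proof is your idea carried out with this constraint built in: Lemma \ref{lem2.4} defines $u=N+C|x|^{-(n-2)}$ as the Taylor-corrected potential (via $\Psi$ and Lemma \ref{lem2.2}, which is what makes the sum converge) of $\sum_j M_j\varphi_j$ with $M_j=\vp_j|x_j|^{-(2m-2)}r_j^{-n}$ and $\sum\vp_j<\infty$; Remark \ref{rem2.1} records the compatibility condition \eqref{eq2.36-2} for $0\le-\Delta^mu\le u^\lambda$; and in the proof of Theorem \ref{thm1.5} the radius $r_j$ is chosen to make \eqref{eq2.36-2} an equality, giving $u(x_j)\ge C\,\psi(|x_j|)^{2m/(n-\lambda(n-2m))}|x_j|^{-a}\ge C\,\varphi(|x_j|)^{1/2}|x_j|^{-a}$ for the calibrated $\psi$ in \eqref{eq3.15}. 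To salvage your write-up you must add the convergence/mass constraint and redo the optimization of $(M_k,\sigma_k)$ against it; doing so reproduces the paper's exponent arithmetic rather than leaving $M_k$ free.
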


With regard to Theorem~\ref{thm1.4}, it is natural to ask what happens when $\lambda\ge \frac{n}{n-2m}$. The answer, given by the following theorem, is that the solutions $u$ can be arbitrarily large as $x\to 0$.

\begin{thm}\label{thm1.6}
  Suppose $m$ and $n$ are integers satisfying (iv) and $\lambda\ge
  \frac{n}{n-2m}$ is a constant. Let $\varphi\colon (0,1)\to
  (0,\infty)$ be a continuous function satisfying $\lim\limits_{r\to
    0^+} \varphi(r)=\infty$. Then there exists a $C^\infty$ positive
  solution $u(x)$ of \eqref{eq1.10} such that
\[
 u(x)\ne O(\varphi(|x|)) \quad \text{as}\quad x\to 0.
\]
\end{thm}

The following five theorems deal with Question~1 when $m$ and $n$ satisfy (v). This is the most interesting case.

\begin{thm}\label{thm1.7}
Let $u(x)$ be a $C^{2m}$ nonnegative solution of \eqref{eq1.1} where the integers $m$ and $n$ satisfy (v) and $f\colon [0,\infty) \to [0,\infty)$ is a continuous function satisfying
\[
 f(t) = O(t^\lambda)\quad \text{as}\quad t\to\infty
\]
where
\[
 0 \le \lambda\le \frac{2n-2}{n-2},\quad \left(\text{resp. } \lambda > \frac{2n-2}{n-2}\right).
\]
Then as $x\to 0$,
\begin{align}\label{eq1.13}
 u(x) &= O(|x|^{-(n-2)}),\\
\label{eq1.14}
\Big(\text{resp. } u(x) &= o\Big(|x|^{-(n-2)} \log \frac5{|x|}\Big)\Big).
\end{align}
\end{thm}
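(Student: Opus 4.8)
The plan is to run a bootstrap argument based on the representation formula of Lemma~\ref{lem2.1} together with the Riesz potential estimates of \cite[Lemma 7.12]{GT}; the scheme parallels the one for Theorem~\ref{thm1.4}, with the power kernel there replaced by its $2m=n$ degeneration. Write $g:=-\Delta^m u\ge 0$. Since $f$ is continuous and $f(t)=O(t^\lambda)$ there is a constant $C$ with $0\le g(x)\le f(u(x))\le C(1+u(x))^\lambda$ on $B_2(0)\setminus\{0\}$. By Lemma~\ref{lem2.1}, on a punctured ball $B_1(0)\setminus\{0\}$ one may write $u$ as the sum of a term that is $O(\Gamma(|x|))=O(|x|^{-(n-2)})$ as $x\to 0$ (a nonnegative multiple of $\Gamma$ plus a function that is $C^{2m}$ up to the origin) and a potential term $\mathcal N$ generated by $g$, together with the a~priori information that a suitable weighted integral of $g$ over $B_1(0)$ is finite. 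The feature special to case~(v) is that, because $2m=n$, iterating the Riesz potential estimate $m$ times to control $\mathcal N$ lands exactly on the borderline (critical) case, in which one factor of $\log\frac5{|x|}$ must be paid; this is the structural source of the logarithm in \eqref{eq1.14}, and it is also why the threshold separating \eqref{eq1.13} from \eqref{eq1.14} is the same number $\frac{2m+n-2}{n-2}=\frac{2n-2}{n-2}$ that occurs in Theorem~\ref{thm1.4}.

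The bootstrap itself: starting from a crude bound $u(x)\le M(|x|)$ on a punctured ball, with $M$ of power--log type, the pointwise inequality $g(y)\le C(1+M(|y|))^\lambda$ (to be sharpened near $y=0$ using the weighted $L^1$ bound on $g$ when $M^\lambda$ fails to be locally integrable there) is fed into $\mathcal N$, and the resulting integral is estimated region by region---$|y|<\tfrac12|x|$, $\tfrac12|x|<|y|<2|x|$, $|y|>2|x|$---to produce an improved bound $u(x)\le C\Gamma(|x|)+\widetilde M(|x|)+C$. Tracking exponents, one checks that for $\lambda\le\frac{2n-2}{n-2}$ the induced recursion drives the singular exponent of $M$ monotonically down to $n-2$ with no residual logarithm, which gives \eqref{eq1.13}, whereas for $\lambda>\frac{2n-2}{n-2}$ the recursion stabilizes at the exponent $n-2$ with exactly one surviving factor of $\log\frac5{|x|}$, which gives the $O$-form of \eqref{eq1.14}. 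The passage from $O$ to $o$ is then soft: knowing the $O$-bound, substitute it once more into $\mathcal N$ and use that for each $\delta>0$ the contribution to $\mathcal N(x)$ from $\{|y|>\delta\}$ is $O(1)$ while the contribution from $\{|y|\le\delta\}$ is at most a multiple of the target that tends to $0$ with $\delta$.

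The step I expect to be the main obstacle is designing the iteration so that it can both be started and be made to close at exactly the claimed bounds. Starting it is delicate because the recursion contracts only once the trial exponent is already small enough that $\mathcal N$ is controllable: the naive estimate $g\le C(1+u)^\lambda$ need not even be locally integrable near $0$, so one cannot simply begin from ``$u$ is finite,'' and producing a first serviceable bound forces one to combine the weighted $L^1$ bound on $g$ from Lemma~\ref{lem2.1} with the nonlinear inequality (alternatively, to rescale $u$ on the balls $B_{|x|/2}(x)$ and apply interior elliptic estimates). The same tension is the crux in the regime $\lambda>\frac{2n-2}{n-2}$, where the pointwise control of $g$ is useless near the origin and the improvement at each stage must instead be extracted from the integral bound on $g$; it is exactly here that the logarithm survives but is not worsened. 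A secondary, purely technical nuisance is keeping track of the logarithmic factors through the $m$-fold iteration so that no spurious powers of $\log\frac5{|x|}$ accumulate and the normalization displayed in \eqref{eq1.14} comes out as stated.
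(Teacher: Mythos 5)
There is a genuine gap, and it is not just the starting point you flag: the closure step itself fails in the case that matters. Your scheme is a pointwise recursion: from $u\le M(|x|)$ you get $-\Delta^m u\le C(1+M(|x|))^\lambda$, feed this into the local part of the potential (kernel $\log\frac{|x|}{|x-y|}$ on $|y-x|<|x|/2$ since $2m=n$), and claim the recursion ``stabilizes at the exponent $n-2$ with exactly one surviving factor of $\log\frac5{|x|}$'' when $\lambda>\frac{2n-2}{n-2}$. But the bound \eqref{eq1.14} is not a fixed point of that recursion: plugging $M(r)=r^{-(n-2)}\log\frac5r$ into the nonlinearity and then into the local potential yields a contribution of order $|x|^{\,n-(n-2)\lambda}\bigl(\log\frac5{|x|}\bigr)^{\lambda}$, which is strictly more singular than $|x|^{-(n-2)}\log\frac5{|x|}$ precisely when $\lambda>\frac{2n-2}{n-2}$. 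So the pointwise iteration does not stabilize there; for power-type trial bounds $r^{-s}$ it contracts only when $s<\frac{n}{\lambda-1}$, and $\frac{n}{\lambda-1}<n-2$ in the supercritical range, so no pointwise self-consistent argument can reach \eqref{eq1.14}. The same computation shows that at the critical value $\lambda=\frac{2n-2}{n-2}$ the recursion is only marginally consistent and cannot be entered from any initial exponent worse than $n-2$; and no a priori pointwise bound of power--log type is available to start with (only the weighted $L^1$ bound $\int |y|^{2m-2}(-\Delta^m u)\,dy<\infty$ of Lemma~\ref{lem2.1}), so the ``crude bound $M$'' your bootstrap needs does not exist before the theorem is proved.

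The mechanism that actually produces \eqref{eq1.13}--\eqref{eq1.14} in the paper is different in kind: one argues by contradiction along a sequence $x_j\to 0$, rescales on balls $B_{r_j}(x_j)$ with $r_j=|x_j|a_b(x_j)$, $a_b$ a suitable power of $|x_j|$ (Lemma~\ref{lem2.3}), and works with the rescaled right-hand sides $f_j$, not with pointwise bounds on $u$. The differential inequality becomes a nonlinear integral inequality for $f_j$ with the log kernel, and the decisive input is that $\|f_j\|_{L^1}\to 0$ (the small local mass coming from Lemma~\ref{lem2.1}); an $L^p$-to-$L^q$ bootstrap via Riesz potential estimates and Lemma~\ref{lem2.5} then forces $f_j$ to be small in $L^\infty$ on a smaller ball, contradicting the lower bound extracted from the assumed failure of the estimate. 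In particular the factor $\log\frac5{|x|}$ enters through the scaling quantity $\log\frac{|x_j|}{r_j}\sim b\log\frac5{|x_j|}$ in the rescaled representation inequality, not from ``paying a log'' at a borderline Riesz iteration as you suggest, and the $o$-refinement comes from the $\vp_j\to 0$ terms rather than a soft post-processing of an $O$-bound. To repair your proposal you would have to replace the pointwise recursion by an argument of this integral/rescaled type; as written, the supercritical case cannot be made to close.
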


By Remark~\ref{rem1.1}, the bound \eqref{eq1.13} is optimal and by the following theorem so is the bound \eqref{eq1.14}.

\begin{thm}\label{thm1.8}
Suppose $m$ and $n$ are integers satisfying (v) and $\lambda$ is a constant satisfying
\begin{equation}\label{eq1.15}
 \lambda > \frac{2n-2}{n-2}.
\end{equation}
Let $\varphi\colon (0,1)\to (0,1)$ be a continuous function satisfying
$\lim\limits_{r\to0^+} \varphi(r) = 0$. Then there exists a $C^\infty$
positive solution $u(x)$ of \eqref{eq1.10} such that
\begin{equation}\label{eq1.17}
 u(x) \ne O\left(\varphi(|x|)|x|^{-(n-2)} \log \frac5{|x|}\right)\quad \text{as}\quad x\to 0.
\end{equation}
\end{thm}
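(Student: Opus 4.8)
The plan is to exhibit $u$ as a sum of localized ``spikes'' placed along a sparse sequence $x_j\to 0$, each of them $m$-superharmonic, superimposed on the $m$-harmonic background $1+\Gamma$. Fix $x_j\in\mathbb R^n$ with $r_j:=|x_j|\downarrow 0$ so rapidly that the balls $B_{r_j/2}(x_j)$ are pairwise disjoint and all the series below converge in $C^\infty_{\mathrm{loc}}(\mathbb R^n\setminus\{0\})$. The construction rests on a building block: for each small $\rho>0$ and each height $H>0$ a radial $\Psi_{\rho,H}\in C^\infty(\mathbb R^n)$ with $\Psi_{\rho,H}\ge 0$, $-\Delta^m\Psi_{\rho,H}\ge 0$ on $\mathbb R^n$, $\Psi_{\rho,H}\ge c_0H$ on $B_{2\rho}(0)$, with $-\Delta^m\Psi_{\rho,H}$ supported in the shell $\{\rho\le|x|\le 2\rho\}$ and $-\Delta^m\Psi_{\rho,H}\le C_0H\rho^{-n}$ there, and with the tail bound $0\le\Psi_{\rho,H}(x)\le C_0H(\rho/|x|)^{n-2}$ for $|x|\ge 2\rho$ (here $c_0,C_0$ are dimensional constants). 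Granting this, set
\[
u(x):=1+\Gamma(|x|)+\sum_{j=1}^\infty\Psi_{\rho_j,H_j}(x-x_j).
\]

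Choose $H_j:=\epsilon_j\,\Gamma(r_j)\log\tfrac5{r_j}$ and $\rho_j:=(C_1H_j)^{-(\lambda-1)/n}$, where $C_1>0$ is a dimensional constant small enough that $C_0H_j\rho_j^{-n}\le(c_0H_j)^{\lambda}$, and where $\epsilon_j\in(0,1)$ tends to $0$ and will be fixed below; with this choice $-\Delta^m\Psi_{\rho_j,H_j}\le(c_0H_j)^{\lambda}$ on its support. Because $\lambda>\tfrac{2n-2}{n-2}$ the number $\beta:=(n-2)-\tfrac n{\lambda-1}$ is positive, and a direct computation gives $\rho_j/r_j=(\mathrm{const})\bigl(r_j^{\,\beta}/(\epsilon_j\log\tfrac5{r_j})\bigr)^{(\lambda-1)/n}$, so $\rho_j\ll r_j$ as soon as $\epsilon_j\gg r_j^{\,\beta}/\log\tfrac5{r_j}$. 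We therefore set $\epsilon_j:=\bigl(\max\{\varphi(r_j),\,r_j^{\,\beta}/\log\tfrac5{r_j}\}\bigr)^{1/2}$: this tends to $0$, dominates $\varphi(r_j)$, and forces $2\rho_j<r_j/2$ for large $j$, so the shell $\{\rho_j\le|x-x_j|\le 2\rho_j\}$ lies inside the disjoint ball $B_{r_j/2}(x_j)$ once $j$ is large.

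Now we verify \eqref{eq1.10}. The tail bound, together with rapid decay of $r_j$, makes the series and all its derivatives converge locally uniformly on $\mathbb R^n\setminus\{0\}$, so $u$ is $C^\infty$ and positive there. Since $-\Delta^m 1=-\Delta^m\Gamma=0$ and each $-\Delta^m\Psi_{\rho_j,H_j}\ge 0$, we get $-\Delta^m u\ge 0$. Fix $x\ne 0$; the shells $\{\rho_j\le|x-x_j|\le 2\rho_j\}$ are pairwise disjoint, so either $-\Delta^m u(x)=0\le u(x)^\lambda$, or $x$ lies in exactly one of them, say the $j$-th, and then $-\Delta^m u(x)=-\Delta^m\Psi_{\rho_j,H_j}(x-x_j)\le(c_0H_j)^{\lambda}$ while $u(x)\ge\Psi_{\rho_j,H_j}(x-x_j)\ge c_0H_j$, hence $-\Delta^m u(x)\le u(x)^\lambda$. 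Thus $u$ is a $C^\infty$ positive solution of \eqref{eq1.10} on $\mathbb R^n\setminus\{0\}$. Finally $u(x_j)\ge\Psi_{\rho_j,H_j}(0)\ge c_0H_j=c_0\epsilon_j\,\Gamma(r_j)\log\tfrac5{r_j}$, so
\[
\frac{u(x_j)}{\varphi(|x_j|)\,\Gamma(|x_j|)\log\tfrac5{|x_j|}}\ \ge\ c_0\,\frac{\epsilon_j}{\varphi(r_j)}\ \longrightarrow\ \infty ,
\]
which is exactly \eqref{eq1.17}.

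The only substantial work is the building block, and this is precisely where the hypothesis ``$m\ge 3$ odd'' of (v) enters. For $m=1$ one may take a smoothly capped Newtonian potential. For even $m$ no nonnegative $m$-superharmonic spike concentrated near a point with Newtonian-type tail exists, consistently with Theorem~\ref{thm1.1}. For odd $m\ge 3$ one prescribes $-\Delta^m\Psi=g\ge 0$ on the shell $\{\rho\le|x|\le 2\rho\}$, with $g$ vanishing to infinite order at the endpoints, and solves the resulting radial $2m$-th order ODE so that $\Psi$ agrees to order $2m-1$ with the constant $H$ at $|x|=\rho$ and with the decaying $m$-harmonic profile $H(\rho/|x|)^{n-2}$ at $|x|=2\rho$; the matching at the outer endpoint is a system of $2m$ linear conditions on $g$, and the obstruction to solving it with $g\ge 0$, $g\not\equiv 0$ would be a strictly positive element in the span of the associated $2m$ functionals, an obstruction present for even $m$ but absent for odd $m$. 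The subsequent $C^\infty$ gluing of these blocks to $1+\Gamma$ and the estimate $-\Delta^m\Psi\lesssim H\rho^{-n}$ are then routine. I expect this existence-and-estimates step to be the main obstacle; once it is in hand the parameter bookkeeping above is elementary, and the identity $\tfrac{(n-2)(\lambda-1)}{n}>1\iff\lambda>\tfrac{2n-2}{n-2}$ is exactly what makes the spikes small enough to fit into the separating balls.
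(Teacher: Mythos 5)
Your parameter bookkeeping at the end is fine, but the proof collapses at the one step you defer: the building block $\Psi_{\rho,H}$ does not exist in case (v), $2m=n$, and the obstruction has nothing to do with the parity of $m$. Indeed, suppose $\Psi\in C^\infty({\bb R}^n)$ is radial, $g:=-\Delta^m\Psi\ge 0$ is supported in the shell $\{\rho\le|x|\le 2\rho\}$, and $0\le\Psi(x)\le C_0H(\rho/|x|)^{n-2}$ for $|x|\ge 2\rho$. For $m$ odd and $2m=n$ the fundamental solution is $\Phi(x)=-A\log\frac5{|x|}$ by \eqref{eq2.3}, so $w(x):=A\int\log\frac5{|x-y|}\,g(y)\,dy$ solves $-\Delta^m w=g$ and satisfies $w(x)=AM\log\frac5{|x|}+O(1/|x|)$ as $|x|\to\infty$, where $M=\int g$. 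Then $E:=\Psi-w$ is an entire radial $m$-polyharmonic function, hence (Almansi expansion, with radial entire harmonic pieces constant) a polynomial in $|x|^2$; but the decay of $\Psi$ forces $E(x)=AM\log\frac{|x|}5+o(1)$, which no polynomial in $|x|^2$ can match unless $M=0$, and $M=0$ gives $\Psi\equiv 0$, contradicting $\Psi\ge c_0H$ on $B_{2\rho}$. So at the critical dimension there is no nonnegative, decaying spike whose $m$-Laplacian is nonnegative and compactly supported, for either parity; the parity of $m$ only fixes the sign of $\Phi$ so that potentials of nonnegative densities are locally positive. Concretely, each of your spikes carries mass of order $H_j\to\infty$ and its true potential spreads logarithmically, contributing roughly $AH_j\log\frac5{|x-x_j|}$ far from $x_j$ instead of the Newtonian tail you assume, and no entire $m$-polyharmonic correction can restore both decay and nonnegativity.

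This is precisely why the paper routes the construction through Lemmas~\ref{lem2.1}, \ref{lem2.2}, and \ref{lem2.4}: the spike potential is formed with the Taylor-corrected kernel \eqref{eq2.4}, whose correction terms are $m$-polyharmonic but singular only at the origin --- the one point where \eqref{eq1.10} need not hold --- so the corrected potential $N$ has the behavior \eqref{eq2.11} with an $O(|x|^{2-n})$ remainder, and global positivity is recovered by adding a single term $C|x|^{-(n-2)}$, i.e.\ $u=N+C|x|^{-(n-2)}$. The largeness of $u$ at $x_j$ is then not prescribed by hand but produced by the concentration factor in \eqref{eq2.30}, $u\ge \frac{A\varepsilon_j}{|x_j|^{n-2}}\log\frac{|x_j|}{r_j}$ on $B_{r_j}(x_j)$, while the constraint $-\Delta^m u\le u^\lambda$ becomes \eqref{eq2.37} (Case 2 of Remark~\ref{rem2.1}), and the whole point of the proof is the balancing of $\psi$, $\rho$, and $r_j$ in \eqref{eq3.44}--\eqref{eq3.48} that makes $\log\frac{|x_j|}{r_j}$ comparable to $\log\frac1{|x_j|}$. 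If you insist on a modular spike picture, the per-spike correction you are forced to use is $A\int\log\frac{|x|}{|x-y|}\,g_j(y)\,dy$, exactly the kernel appearing in \eqref{eq2.11}; it is not nonnegative, and controlling its negative part is the content of Lemma~\ref{lem2.2} and Lemma~\ref{lem2.4}, so there is no shortcut around that machinery. As it stands, your proposal has a genuine gap at its central existence claim, and the heuristic offered for it (a sign obstruction present for even $m$ and absent for odd $m$) is not correct.
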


By the following theorem $u(x)$ may satisfy a pointwise a priori bound even when $f(t)$ grows, as $t\to\infty$, faster than any power of $t$.

\begin{thm}\label{thm1.9}
Let $u(x)$ be a $C^{2m}$ nonnegative solution of \eqref{eq1.1} where the integers $m$ and $n$ satisfy (v) and $f\colon [0,\infty)\to [0,\infty)$ is a continuous function satisfying
\[
 \log(1+f(t)) = O(t^\lambda)\quad \text{as}\quad t\to\infty
\]
where
\begin{equation}\label{eq1.18}
 0 < \lambda < 1.
\end{equation}
Then
\begin{equation}\label{eq1.19}
 u(x) = o\left(|x|^{\frac{-(n-2)}{1-\lambda}}\right)\quad \text{as}\quad x\to 0.
\end{equation}
\end{thm}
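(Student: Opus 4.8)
The plan is to combine the representation formula of Lemma~\ref{lem2.1} with the growth hypothesis on $f$ and to run a bootstrap on the maximal growth rate of $u$ along small spheres. Write $g:=-\Delta^m u\ge 0$ and apply Lemma~\ref{lem2.1} to obtain, for $x$ in a small punctured ball $B_{\rho}(0)\setminus\{0\}$,
\[
u(x)=c_{n,m}\!\int_{B_{\rho}(0)}K(x,y)\,g(y)\,dy+\Phi(x),
\]
where $K(x,y)$ is the localized fundamental kernel of $\Delta^m$, which in case~(v) satisfies $0\le K(x,y)\le C\log\frac{3}{|x-y|}$, where $\Phi$ gathers the smooth part and the singular $m$-harmonic terms so that $\Phi(x)=O(|x|^{-(n-2)})$ as $x\to 0$, and where Lemma~\ref{lem2.1} also supplies the integrability of $g$ near the origin that makes the potential converge. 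Since $\frac{n-2}{1-\lambda}>n-2$, the term $\Phi(x)$ is already $o\big(|x|^{-(n-2)/(1-\lambda)}\big)$, so it suffices to prove the same estimate for $P(x):=\int_{B_{\rho}}K(x,y)g(y)\,dy$.

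Next I would split $P(x)$ according to $|y|<\tfrac12|x|$, $\tfrac12|x|\le|y|\le 2|x|$, and $2|x|<|y|<\rho$. On the first and third regions $K(x,y)$ is comparable to $\log\frac{3}{|x|}$, resp.\ to $\log\frac{3}{|y|}$, and the corresponding contributions are $o\big(|x|^{-(n-2)/(1-\lambda)}\big)$ by the integrability of $g$; the whole difficulty is the diagonal piece $P_0(x):=\int_{\frac12|x|\le|y|\le 2|x|}K(x,y)g(y)\,dy$. Here I would use $0\le g\le f(u)$ together with $\log(1+f(t))=O(t^{\lambda})$ to get the pointwise bound $g(y)\le \exp\!\big(C\,\mathcal M(|x|)^{\lambda}+C\big)$ on the annulus, where $\mathcal M(r):=\sup_{r/2\le|z|\le 2r}u(z)$, and at the same time the $L^1$ mass bound $\int_{\frac12|x|\le|y|\le 2|x|}g(y)\,dy\lesssim \mathcal M(2|x|)\,|x|^{\,n-2m}$ obtained by testing $-\Delta^m u\ge 0$ against a standard cutoff at scale $|x|$ (which uses $u\le\mathcal M$). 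Applying the pointwise bound on the small set where $K(x,\cdot)$ is singular and the mass bound elsewhere, and optimizing the splitting radius, yields a self-improving inequality for $\mathcal M$; the balancing must be arranged so that, after a covering argument passing from $u(x)$ to $\mathcal M$, the resulting recursion has the form
\[
\mathcal M(r)\le C\,r^{-(n-2)}\big(1+\mathcal M(r)^{\lambda}\big)\qquad(0<r<\rho).
\]

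Finally I would iterate: a uniform bound $\mathcal M(r)=O(r^{-\gamma})$ upgrades to $\mathcal M(r)=O\big(r^{-((n-2)+\lambda\gamma)}\big)$, and the affine map $\gamma\mapsto (n-2)+\lambda\gamma$ has the attracting fixed point $\gamma^{\ast}=\frac{n-2}{1-\lambda}$. Starting from a first finite bound $\mathcal M(r)=O(r^{-\gamma_0})$ (itself extracted from the representation formula and a crude use of the growth of $f$), the iterates decrease to $\gamma^{\ast}$, and since the accompanying constants satisfy a recursion with a finite fixed point they stay bounded; this gives $\mathcal M(r)=O(r^{-\gamma})$ for every $\gamma>\frac{n-2}{1-\lambda}$. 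A last pass, in which the crude integrability estimates are replaced by the statement that the relevant tails of $\int g\,|y|^{-(n-2)}\,dy$ tend to $0$, improves this to the $o(\cdot)$ assertion \eqref{eq1.19}; compare the structurally similar but easier polynomial case of Theorem~\ref{thm1.7}.

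The heart of the matter is the diagonal estimate: because $f$ may grow like $e^{Ct^{\lambda}}$, the pointwise bound $g\le\exp(C\mathcal M^{\lambda})$ by itself is far too lossy, and one must combine it with the $L^1$ mass control of $g$ so that the set on which $g$ is enormous is forced to be exponentially small; getting the balanced estimate to have exactly the fixed point $\frac{n-2}{1-\lambda}$, rather than a strictly worse exponent, is precisely what produces the exponent in \eqref{eq1.19}, and I expect this to be the main obstacle. A secondary difficulty is producing the first finite bound that seeds the bootstrap, and keeping the constants under control through the finitely many effective iteration steps.
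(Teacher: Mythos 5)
You have correctly isolated the engine of the result: the balance between the exponential pointwise bound on $-\Delta^m u$ and the smallness of its mass near the logarithmic singularity of the kernel is exactly what produces the exponent $\frac{n-2}{1-\lambda}$, and this same balance is at the heart of the paper's proof (Theorem~\ref{thm3.3}, proved via Lemmas~\ref{lem2.1}--\ref{lem2.3}, Jensen's inequality, and a mass-rearrangement argument). But two of your concrete steps fail. First, the mass bound you actually propose, $\int_{\frac12|x|\le|y|\le2|x|}g\,dy\lesssim \mathcal M(2|x|)\,|x|^{n-2m}=\mathcal M(2|x|)$ (recall $n=2m$ in case (v)), is far too weak: inserted into your own optimization of the splitting radius it yields $\mathcal M(r)\lesssim r^{-(n-2)}+\mathcal M(2r)\bigl(\mathcal M(r)^{\lambda}+\log^{+}\tfrac1{\mathcal M(2r)}\bigr)$, essentially $\mathcal M\lesssim\mathcal M^{1+\lambda}$, which is not self-improving and cannot give the claimed recursion $\mathcal M(r)\le Cr^{-(n-2)}(1+\mathcal M(r)^{\lambda})$, let alone the exponent $\frac{n-2}{1-\lambda}$. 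The input that makes the balance work is the weighted integrability $\int_{|y|<1}|y|^{2m-2}(-\Delta^m u)\,dy<\infty$ of Lemma~\ref{lem2.1}, which gives annulus mass $o(|x|^{-(n-2)})$; you never use it for this purpose. Relatedly, your representation formula is wrong as stated: $-\Delta^m u$ need not lie in $L^1$ near the origin, so a kernel bounded by $C\log\frac3{|x-y|}$ integrated against $g$ need not converge; one must use the Taylor-corrected kernel $\Psi$ of \eqref{eq2.4} and the decomposition \eqref{eq2.11} of Lemma~\ref{lem2.2}, in which the off-diagonal regions are controlled by the weighted integrability, not by integrability of $g$.

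Second, your bootstrap is circular. Even with the correct mass bound, the inequality one obtains has on its right-hand side the supremum of $u$ over a strictly larger annulus, and, more seriously, there is no seed: no a priori bound $\mathcal M(r)=O(r^{-\gamma_0})$ is available to start the iteration, and producing one faces the same chicken-and-egg problem as the theorem itself (to control the potential near the kernel singularity you need a pointwise bound on $e^{u^{\lambda}}$, i.e., on $u$). You acknowledge this as a secondary difficulty, but it is where the real work lies and your sketch offers no mechanism for it. The paper avoids it entirely by arguing by contradiction and rescaling (Lemma~\ref{lem2.3}) at points $x_j$ with radii $r_j$ defined by $\log\frac1{r_j}=|x_j|^{-(n-2)\lambda/(1-\lambda)}$, so that the rescaled densities $f_j$ automatically have total mass tending to zero; then, instead of a sup over an annulus, it substitutes the representation inequality \eqref{eq2.18} for $u$ back into the exponential bound, obtaining a closed nonlinear integral inequality for $f_j$ alone. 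The set where the resulting potential exceeds a threshold $M_j$ is shown to contribute negligibly via an $L^2$ estimate using Jensen's inequality and explicit computations, and the remaining configuration is excluded by moving the mass to the origin and computing. That closure step, which removes any need for an a priori bound on $u$, is the idea missing from your proposal.
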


By the following theorem, the estimate \eqref{eq1.19} in Theorem~\ref{thm1.9} is optimal.

\begin{thm}\label{thm1.10}
Suppose $m$ and $n$ are integers satisfying (v) and $\lambda$ is a constant satisfying \eqref{eq1.18}. Let $\varphi\colon (0,1)\to (0,1)$ be a continuous function satisfying $\lim\limits_{r\to 0^+} \varphi(r) = 0$. Then there exists a $C^\infty$ positive solution $u(x)$ of
\begin{equation}\label{eq1.20}
 0 \le -\Delta^m u \le e^{u^\lambda}\quad \text{in}\quad {\bb R}^n\backslash\{0\}
\end{equation}
such that
\begin{equation}\label{eq1.21}
 u(x)\ne O\left(\varphi(|x|) |x|^{\frac{-(n-2)}{1-\lambda}}\right)\quad \text{as}\quad x\to 0.
\end{equation}
\end{thm}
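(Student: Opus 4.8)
The plan is a superposition argument adapted to the critical dimension. Since $m$ is odd and $n=2m$ we have $-\Delta^m=(-\Delta)^m$, so we must build a positive $C^\infty$ superpolyharmonic function on $\mathbb R^n\setminus\{0\}$ with a strong isolated singularity that nonetheless, by Theorem~\ref{thm1.9}, stays below $|x|^{-(n-2)/(1-\lambda)}$; write $a=(n-2)/(1-\lambda)$. Replacing $\varphi$ by $r\mapsto\sup_{0<s\le r}\varphi(s)$ we may assume $\varphi$ is nondecreasing with $\varphi(0^+)=0$, and it suffices to produce a $C^\infty$ positive solution $u$ of \eqref{eq1.20} together with a sequence $r_j\downarrow0$ such that $u(r_je_1)\ge j\,\varphi(r_j)r_j^{-a}$ for every $j$. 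I would write $u=1+\sum_{j\ge1}u_j$, where each $u_j\ge0$ is a $C^\infty$ piece carrying a prescribed nonnegative forcing $g_j=-\Delta^m u_j\ge0$ and concentrated on scale $r_j$ near the point $r_je_1$; the additive constant $1$ is there only to keep $u$ positive far from the origin, where the $u_j$ are negligible. Then $-\Delta^m u=\sum_jg_j\ge0$ for free, and the only inequality left in \eqref{eq1.20} is $-\Delta^m u\le e^{u^\lambda}$, which need only be checked on $\bigcup_j\operatorname{supp}g_j$. On $\operatorname{supp}g_j$ the Riesz potential estimates of \cite[Lemma~7.12]{GT} make $u$ comparable to the peak height $M_j$ of $u_j$, so this reduces to a scalar inequality of the shape $\log(\sup g_j)\lesssim M_j^{\lambda}$, for which the \emph{exponential} growth of the nonlinearity leaves an enormous margin; this is exactly why $e^{u^\lambda}$, rather than a power of $u$, appears in \eqref{eq1.20}.

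First I would fix $r_j\downarrow0$ decreasing so fast that the supports can be kept pairwise disjoint inside $B_1(0)\setminus\{0\}$ and so fast that $\varphi(r_j)<j^{-2}$. The latter forces the target heights $M_j:=j\,\varphi(r_j)r_j^{-a}$ to satisfy $M_j=o(r_j^{-a})$, i.e.\ to be consistent with the sharp bound of Theorem~\ref{thm1.9}, while still $M_j/(\varphi(r_j)r_j^{-a})=j\to\infty$, which is all that \eqref{eq1.21} asks of $u$. Next I would choose the remaining parameters of $g_j$ (its scale(s) and mass(es)) so that simultaneously (i) the induced $u_j$ reaches height $\gtrsim M_j$ near $r_je_1$, (ii) $\log(\sup g_j)\lesssim M_j^\lambda$ --- automatic once $M_j\to\infty$, by the exponential margin above --- and (iii) the sum $\sum_ju_j$ converges in $C^\infty_{\mathrm{loc}}(\mathbb R^n\setminus\{0\})$, so that $u$ is a genuine $C^\infty$ function that is finite (and positive) at every point $\ne0$. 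Granting (i)--(iii), $u(r_je_1)\ge u_j(r_je_1)\gtrsim M_j$ and \eqref{eq1.21} follows.

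The crux --- and the reason case (v) is singled out as the hardest of the five --- is reconciling (i) and (iii) when $2m=n$. When $2m<n$ (case (iv), Theorems~\ref{thm1.4}--\ref{thm1.6}) the $m$-fold Riesz potential of a mass $m_j$ concentrated at scale $\rho_j$ peaks at $\sim m_j\rho_j^{2m-n}$, a genuine \emph{power} enhancement, so one makes $u_j$ as tall as desired with an arbitrarily small, summable mass just by shrinking $\rho_j$. When $2m=n$ that peak is only $\sim m_j\log(1/\rho_j)$, a bare logarithmic enhancement, and, because a thin forcing has large sup-norm, $\rho_j$ cannot be shrunk without limit either; consequently a single bump does not produce a valid $u$ when $\varphi$ decays slowly. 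The construction must instead spread the data of each $u_j$ over a whole range of scales near $r_j$ --- building the singularity up cumulatively, e.g.\ along dyadic annuli --- while keeping $-\Delta^m$ of every gluing nonnegative, and must lean heavily on $e^{u^\lambda}$ to afford the correspondingly large forcing. Arranging this while also keeping the $u_j$ individually supported near the origin (so that $u=1+\sum u_j$ is finite and $C^\infty$ on all of $\mathbb R^n\setminus\{0\}$, the interfaces between scales accumulating only at $0$) is the main technical burden; that the resulting $u$ does satisfy $u(x)=o(|x|^{-a})$ is then not checked directly but follows a posteriori from Theorem~\ref{thm1.9}, confirming the construction is consistent with the very bound it is designed to show is optimal.
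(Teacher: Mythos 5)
The decisive gap is your step (ii): the claim that $\log(\sup g_j)\lesssim M_j^{\lambda}$ is ``automatic once $M_j\to\infty$, by the exponential margin'' is false, and it is exactly where the proof lives in the critical case $2m=n$. Here the kernel gives only a logarithmic enhancement: a forcing bump of height $\varepsilon_j/(|x_j|^{n-2}\rho_j^{n})$ on a ball of radius $\rho_j$ about $x_j$ raises the solution there by only about $\frac{\varepsilon_j}{|x_j|^{n-2}}\log\frac{|x_j|}{\rho_j}$ (this is \eqref{eq2.30}), while $\log(\sup g_j)\approx n\log\frac1{\rho_j}$. So to reach a height $M_j$ you are forced to take $\log\frac1{\rho_j}\approx M_j|x_j|^{n-2}/\varepsilon_j$, and the inequality $\log(\sup g_j)\le M_j^{\lambda}$ becomes $nM_j|x_j|^{n-2}/\varepsilon_j\lesssim M_j^{\lambda}$, i.e.\ $M_j\lesssim(\varepsilon_j/|x_j|^{n-2})^{1/(1-\lambda)}$. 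Since the weighted masses $\varepsilon_j$ must be summable for the potential to exist (cf.\ \eqref{eq2.8} and Lemmas \ref{lem2.1}, \ref{lem2.2}), $\varepsilon_j\le1$, and you recover precisely the threshold $|x_j|^{-(n-2)/(1-\lambda)}$ of Theorem \ref{thm1.9}: there is no margin at all, and this tight balance is what determines the exponent. Reconciling it with $u(x_j)\gg\varphi(|x_j|)|x_j|^{-(n-2)/(1-\lambda)}$ is the quantitative heart of the paper's argument: one takes $\varepsilon_j=\psi(|x_j|)$ with $\psi(r)=\max\{\varphi(r)^{(1-\lambda)/2},r^{(n-2)/2}\}$ along a sequence sparse enough that $\sum_j\psi(|x_j|)<\infty$, fixes $\log\frac{|x_j|}{r_j}$ by the exact balance \eqref{eq3.68}, and checks \eqref{eq2.37} via \eqref{eq3.69}, gaining only the factor $\varphi^{-1/2}$ over $\varphi$. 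None of this balancing appears in your sketch; moreover your diagnosis that a single bump per location cannot work and that one must spread mass over dyadic annuli is mistaken --- the paper's Lemma \ref{lem2.4} uses exactly one bump per point $x_j$, for arbitrary $\varphi\to0$.

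The second gap is that the pieces $u_j$ are never constructed, and the properties you assign them are mutually inconsistent. A nonzero $u_j\ge0$ of compact support cannot satisfy $-\Delta^m u_j\ge0$ (its $m$-Laplacian integrates to zero, and a compactly supported polyharmonic function vanishes identically), so ``$u_j$ individually supported near the origin'' is impossible; if instead $u_j$ is the potential of the bump $g_j$, then for $2m=n$ with $m$ odd the fundamental solution is $-A\log\frac5{|x|}$ by \eqref{eq2.3}, which changes sign, so $u_j$ is negative and logarithmically unbounded below at infinity, and the additive constant $1$ cannot restore positivity of $1+\sum_j u_j$ (nor is convergence of the sum of such tails clear). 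The paper handles precisely this in Lemmas \ref{lem2.2} and \ref{lem2.4}: the kernel is replaced by $\Psi$ of \eqref{eq2.4}, giving a single well-defined potential $N$ with error $O(|x|^{2-n})$, and positivity is obtained by adding the $\Delta^m$-harmonic singular function $C|x|^{-(n-2)}$ rather than a constant. Without an analogue of Lemma \ref{lem2.4} your steps (i) and (iii) are unsupported, so the proposal, while architecturally similar to the paper's construction, does not yet constitute a proof.
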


With regard to Theorem~\ref{thm1.9}, it is natural to ask what happens
when $\lambda\ge 1$. The answer, given by the following theorem, is
that the solutions $u$ can be arbitrarily large as $x\to 0$.

\begin{thm}\label{thm1.11}
  Suppose $m$ and $n$ are integers satisfying (v) and $\lambda\ge 1$
  is a constant. Let $\varphi\colon (0,1)\to (0,\infty)$ be a
  continuous function satisfying $\lim\limits_{r\to 0^+} \varphi(r) =
  \infty$. Then there exists a $C^\infty$ positive solution of
  \eqref{eq1.20}
such that
\begin{equation}\label{eq1.23}
 u(x) \ne O(\varphi(|x|))\quad \text{as}\quad x\to 0.
\end{equation}
\end{thm}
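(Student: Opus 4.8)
The plan is to adapt the "sum of bumps'' construction behind Theorem~\ref{thm1.6}: exhibit smooth, $m$-superharmonic spikes centred at a sequence $x_j\to0$ whose heights are forced to outgrow $\varphi$, and then add a large constant to keep everything positive. The new feature of case (v) is that $2m=n$, so the fundamental solution of $-\Delta^m$ (which equals $(-\Delta)^m$ since $m$ is odd) is merely logarithmic and, worse, sign changing, and cannot serve as the basic bump; instead I would use a mollified, slightly‑less‑singular Riesz kernel. Fix $\gamma$ with $n-4<\gamma<n-2$. Then $\gamma+2i>0$ for all $i$, the factor $n-2-\gamma>0$, while $n-2-\gamma-2i<0$ for $1\le i\le m-1$, so the number of negative factors in $d_m(\gamma):=\prod_{i=0}^{m-1}(\gamma+2i)(n-2-\gamma-2i)$ is $m-1$, an even number; hence $d_m(\gamma)>0$ and $-\Delta^m|x|^{-\gamma}=d_m(\gamma)|x|^{-\gamma-n}>0$ on $\mathbb R^n\setminus\{0\}$. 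For $\varepsilon>0$ put $\Psi_\varepsilon(x):=(|x|^2+\varepsilon^2)^{-\gamma/2}$. The crux is a \emph{Key Lemma}: $-\Delta^m\Psi_\varepsilon\ge0$ on $\mathbb R^n$, and $-\Delta^m\Psi_\varepsilon(x)\le C_\gamma\,(|x|^2+\varepsilon^2)^{-(\gamma+n)/2}=C_\gamma\,\Psi_\varepsilon(x)^{(\gamma+n)/\gamma}$, with $C_\gamma=C_\gamma(m,n,\gamma)$.

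By homogeneity $-\Delta^m\Psi_\varepsilon(x)=\varepsilon^{-\gamma-2m}F(|x|/\varepsilon)$, where $F(s):=\bigl(-\Delta^m(1+|y|^2)^{-\gamma/2}\bigr)\big|_{|y|=s}$, so the Key Lemma reduces to $F\ge0$ on $[0,\infty)$ together with $F(s)=O(s^{-\gamma-n})$. As $|y|\to\infty$ one has $F(|y|)\sim d_m(\gamma)|y|^{-\gamma-n}>0$, since the leading term of $(1+|y|^2)^{-\gamma/2}$ is $|y|^{-\gamma}$; near $|y|=0$, since $\Delta^m$ annihilates every $|y|^{2k}$ with $2k<2m=n$, the first surviving term in the expansion $\sum_k\binom{-\gamma/2}{k}|y|^{2k}$ is the $k=m$ term, whose value at $0$ after applying $-\Delta^m$ works out to be strictly positive. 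What remains — and is the step I expect to be the main obstacle — is to rule out a sign change in the transition region $|y|\sim1$; writing $F(s)=(1+s^2)^{-(\gamma+n)/2-m}Q(s^2)$, this is the positivity on $[0,\infty)$ of an explicit polynomial $Q$ of degree $m$ in $s^2$ with $Q(0)>0$ and leading coefficient $d_m(\gamma)>0$, which seems to require an honest (if routine) computation of $Q$.

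Granting the Key Lemma, take $x_j\in\mathbb R^n$ with $|x_j|=2^{-j-2}$ on a fixed ray, set $a_j:=2^{-j}$, choose $0<\varepsilon_j\le 2^{-j-2}$ small enough that $2^{-j}\varepsilon_j^{-\gamma}\ge j\,\varphi(|x_j|)$, fix a constant $B\ge1$ to be chosen, and put
\[
u(x):=B+\sum_{j=1}^{\infty}a_j\,\Psi_{\varepsilon_j}(x-x_j).
\]
Each summand is $C^\infty$ on $\mathbb R^n$; near any $x_0\ne0$ only finitely many $x_j$ lie within $|x_0|/2$, and the remaining terms, together with all their derivatives, are dominated by a $j$‑independent multiple of $|x-x_j|^{-\gamma}\le(|x_0|/4)^{-\gamma}$, so the series converges in $C^\infty_{\mathrm{loc}}(\mathbb R^n\setminus\{0\})$. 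Hence $u\in C^\infty(\mathbb R^n\setminus\{0\})$, $u\ge B>0$, and $-\Delta^m u=\sum_j a_j(-\Delta^m\Psi_{\varepsilon_j})(\cdot-x_j)\ge0$ by the Key Lemma. Moreover $u(x_j)\ge a_j\Psi_{\varepsilon_j}(0)=2^{-j}\varepsilon_j^{-\gamma}\ge j\,\varphi(|x_j|)$, so $u(x_j)/\varphi(|x_j|)\to\infty$ and $u\ne O(\varphi(|x|))$ as $x\to0$.

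It remains to verify $-\Delta^m u\le e^{u^\lambda}$. For $0<|x|<\tfrac14$, keeping only the term $j^\ast$ with $|x_{j^\ast}|$ comparable to $|x|$ (and using $\varepsilon_{j^\ast}\le|x_{j^\ast}|$, $|x-x_{j^\ast}|\le 2|x|$) gives $u(x)\ge c\,|x|^{-(\gamma-1)}$ with $c=c(\gamma)>0$ (note $\gamma-1>0$). On the other hand the Key Lemma's bound gives
\[
-\Delta^m u(x)\ \le\ C_\gamma\sum_j a_j\,\Psi_{\varepsilon_j}(x-x_j)^{(\gamma+n)/\gamma}\ \le\ C\Bigl(|x|^{-(\gamma+n-1)}+|x|^{-(q-1)}u(x)^{q}\Bigr),\qquad q:=\tfrac{\gamma+n}{\gamma},
\]
where the first term absorbs the geometrically summable contributions of the $x_j$ with $|x_j|$ not comparable to $|x|$, and the second the boundedly many "near'' terms, each estimated via $\Psi_{\varepsilon_j}(x-x_j)\le a_j^{-1}u(x)$. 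Inserting $u(x)\ge c|x|^{-(\gamma-1)}$ converts the right side into $C\,u(x)^{p}$ with a fixed $p=p(\gamma,m,n)$ on $0<|x|<\tfrac14$, while for $|x|\ge\tfrac14$ one has the crude bound $-\Delta^m u(x)\le C_1$ (a constant, since then $|x-x_j|\ge\tfrac18$ for all $j$). Since $\lambda\ge1$, $e^{t^\lambda}\ge e^{t}$ eventually dominates any fixed power of $t$; choosing $B$ large enough, depending only on $m,n,\gamma,\lambda$ and \emph{not} on $\varphi$, so that $t\ge B\Rightarrow C\,t^{p}\le e^{t^\lambda}$ and $C_1\le e^{B^\lambda}$, we obtain $-\Delta^m u\le e^{u^\lambda}$ throughout $\mathbb R^n\setminus\{0\}$, because $u\ge B$ there. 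This $u$ has all the required properties. (Exactly the same construction with $|x|^{2m-n}$ in place of $\Psi_\varepsilon$ would reprove Theorem~\ref{thm1.6}.)
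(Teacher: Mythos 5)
Your construction stands or falls with the ``Key Lemma'', and precisely there it fails: the lemma is not merely unverified but false. Take the smallest instance of case (v), $m=3$, $n=6$, and the midpoint $\gamma=3$ of your admissible range $(n-4,n-2)=(2,4)$. Writing $s=(1+|y|^2)^{-1}$ and iterating the identity $\Delta (1+|y|^2)^{-\alpha}=2\alpha(2\alpha+2-n)(1+|y|^2)^{-\alpha-1}-4\alpha(\alpha+1)(1+|y|^2)^{-\alpha-2}$ three times gives, in ${\bb R}^6$,
\[
-\Delta^3 (1+|y|^2)^{-3/2}=s^{9/2}\left(315+8505\,s-93555\,s^{2}+135135\,s^{3}\right),
\]
and this cubic is consistent with your two checks (it equals $315=d_3(3)$ at $s=0$ and $50400>0$ at $s=1$), but at $|y|^2=3/2$, i.e.\ $s=2/5$, it equals $315+3402-14968.8+8648.64\approx-2.6\times10^{3}<0$. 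So $-\Delta^m\Psi_\varepsilon$ changes sign on an annulus $|y|\approx 1.2\,\varepsilon$; the same computation at other sample values ($\gamma=2.2$, $\gamma=3.8$) also produces sign changes, so no choice of $\gamma$ in your range rescues even this one case. This is fatal for the construction as written: by scaling, $-\Delta^m\bigl(a_j\Psi_{\varepsilon_j}(x-x_j)\bigr)$ is $\approx -c\,a_j\varepsilon_j^{-\gamma-n}$ on the annulus $|x-x_j|\sim\varepsilon_j$, while the other spikes, whose centers lie at distance $\gtrsim|x_j|\gg\varepsilon_j$, contribute there only $O\bigl(|x_j|^{-\gamma-n}\sum_k a_k\bigr)$; hence $-\Delta^m u<0$ near every spike, $u$ violates the left-hand inequality in \eqref{eq1.20}, and adding the constant $B$ does not help since it does not change $\Delta^m u$.

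The paper sidesteps exactly this difficulty by never asking for a closed-form, sign-definite bump profile. In Lemma \ref{lem2.4} one first prescribes $-\Delta^m u=f=\sum_j M_j\varphi_j\ge0$, a sum of nonnegative mollified bumps supported in tiny balls $B_{r_j}(x_j)$, and then recovers $u$ as the potential $N$ of $f$ built from the kernel of Lemma \ref{lem2.2} plus $C|x|^{-(n-2)}$, so $0\le-\Delta^m u$ is automatic. In the case $2m=n$ the logarithmic kernel yields, by \eqref{eq2.30}, $u\ge A\psi(|x_j|)|x_j|^{-(n-2)}\log\frac{|x_j|}{r_j}$ on $B_{r_j}(x_j)$, which is pushed above $\varphi(|x_j|)^2$ by taking $r_j$ tiny, while by \eqref{eq2.28} $-\Delta^m u\le\psi(|x_j|)|x_j|^{-(n-2)}r_j^{-n}$ grows only polynomially in $1/r_j$; for $\lambda\ge1$ the required inequality $-\Delta^m u\le e^{u^\lambda}$ then reduces to verifying \eqref{eq2.37} of Remark \ref{rem2.1}, which is the elementary estimate \eqref{eq3.73}. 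If you wish to keep your spike picture, the repair is to define each spike as the convolution of a nonnegative density with the kernel $-\Psi(x,y)$ of Lemma \ref{lem2.2} (which is in essence what Lemma \ref{lem2.4} does), rather than as a mollified power; your remaining bookkeeping (an exponential beating any fixed power once $u\ge B$) is the part of your argument that is sound and mirrors the paper's use of $\lambda\ge1$.
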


Theorems~\ref{thm1.3}--\ref{thm1.11} are ``nonradial''. By this we
mean that if one requires the solutions $u(x)$ in Question~1 to be
radial then, according to the following proposition, the complete
answer to Question~1 is very different.

\begin{pro}\label{pro1.1}
  Suppose $m\ge 1$ and $n\ge 2$ are integers and $f\colon [0,\infty)
  \to [0,\infty)$ is a continuous function. Let $u(x)$ be a $C^{2m}$
  nonnegative $\underline{\text{radial}}$ solution of \eqref{eq1.1}
  or, more generally, of \eqref{eq2.5}.
 Then $u$ satisfies \eqref{eq1.4}.
\end{pro}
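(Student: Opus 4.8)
The plan is to exploit the fact that for a radial function the differential inequality $-\Delta^m u\ge 0$ becomes an ordinary differential inequality that can be integrated explicitly outward from a fixed radius, and that this one-sided integration preserves sign. Write $u=u(r)$ with $r=|x|$, and let $L=\frac{d^2}{dr^2}+\frac{n-1}{r}\frac{d}{dr}$ denote the radial Laplacian, so that $L^mu\le 0$ on $(0,2)$. For a radial $C^2$ function $w$ and a continuous function $\psi$ on $(0,2)$, the solution of $Lw=\psi$ with $w(1)=w'(1)=0$ is $w(r)=\int_1^r\rho^{1-n}\int_1^\rho s^{n-1}\psi(s)\,ds\,d\rho$, which for $r\in(0,1)$ equals $\int_r^1\rho^{1-n}\int_\rho^1 s^{n-1}\psi(s)\,ds\,d\rho$. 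Since this kernel is positive, $w$ has the same sign as $\psi$ on $(0,1)$, and the integrals never approach the singularity at $0$, so only the continuity of $\psi$ on $(0,2)$ is used.

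Iterating this construction $m$ times, starting from $\psi=L^mu\le 0$, I would obtain a radial $C^{2m}$ function $\Phi$ with $L^m\Phi=L^mu$ on $(0,2)$, for which $\Phi,\Phi',L\Phi,(L\Phi)',\dots,L^{m-1}\Phi,(L^{m-1}\Phi)'$ all vanish at $r=1$; because each of the $m$ stages preserves sign on $(0,1)$ and $L^mu\le 0$, this forces $\Phi\le 0$ on $(0,1)$. Setting $v=u-\Phi$, the function $v$ is radial and $C^{2m}$ on $(0,1)$, satisfies $L^mv=0$ there, and $0\le u=v+\Phi\le v$ on $(0,1)$. It then suffices to show $v(r)=O(\Gamma(r))$ as $r\to 0^+$.

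This last step is the classical description of radial polyharmonic functions. Substituting $r^\alpha$ gives $L^mr^\alpha=\big[\prod_{k=0}^{m-1}(\alpha-2k)(\alpha+n-2-2k)\big]r^{\alpha-2m}$, so the indicial roots are $0,2,\dots,2(m-1)$ and $2-n,4-n,\dots,2m-n$; hence every radial solution of $L^mv=0$ on $(0,1)$ is a linear combination of the functions $r^{2j}$ and $r^{2j+2-n}$ for $0\le j\le m-1$, each possibly carrying a factor $\log r$ if it corresponds to a repeated root (the repeated roots satisfy $\beta\ge 0$ when $n\ge 3$, and when $n=2$ the roots $0,2,\dots,2(m-1)$ are all double). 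In every case the most singular of these functions is $r^{2-n}$ when $n\ge 3$ and $\log r$ when $n=2$, and each of them is $O(\Gamma(r))$ as $r\to 0^+$. Combined with $0\le u\le v$, this gives $u(x)=O(\Gamma(|x|))$ as $x\to 0$.

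The points requiring care are (a) verifying that the iterated one-sided integration genuinely preserves sign at each of the $m$ stages --- this is exactly where the radial, one-dimensional structure is essential, and is what is unavailable for general non-radial solutions when $m$ is odd and $2m\le n$ --- and (b) accounting for the possible logarithmic resonant solutions of $L^mv=0$ so as to confirm that none of them is more singular than $\Gamma$. Both are routine once set up, and the argument uses nothing beyond $-\Delta^m u\ge 0$, $u\ge 0$, and radial symmetry; in particular, no growth hypothesis on $f$ is needed.
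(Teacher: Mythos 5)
Your argument is correct, but it takes a different route from the paper's. The paper deduces Proposition \ref{pro1.1} from the slightly stronger Proposition \ref{pro3.1} (radial $u$ with $\Delta^m u\le A\Gamma$ implies $u\le C\Gamma$), proved by induction on $m$: the case $m=1$ is the explicit double integration of the radial ODE from $r=1$, and the inductive step simply applies the $m=1$ case to $\Delta^{m-1}u$ to get $\Delta^{m-1}u\le A_1\Gamma$ and then invokes the inductive hypothesis; strengthening the right-hand side from $0$ to $A\Gamma$ is exactly what makes the induction close, and no classification of radial polyharmonic functions is ever needed. You instead keep the hypothesis $\Delta^m u\le 0$, build by $m$-fold integration with zero Cauchy data at $r=1$ a nonpositive function $\Phi$ with $L^m\Phi=L^m u$, and reduce to bounding the radial polyharmonic function $v=u-\Phi\ge u$ via the Euler (equidimensional) structure of $r^{2m}L^m$ and its indicial roots $0,2,\dots,2(m-1)$ and $2-n,\dots,2m-n$, checking that resonant logarithmic solutions only occur at nonnegative roots when $n\ge3$ and that for $n=2$ the worst term is $\log r$; both your sign-preservation step and your resonance bookkeeping are sound. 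The common core is the same one-dimensional integration from $r=1$ with a positive kernel; the paper's induction buys brevity and a marginally more general statement (right-hand side $A\Gamma$ rather than $0$, which your construction of $\Phi$ as written does not cover, though it is not needed for Proposition \ref{pro1.1}), while your decomposition buys the explicit structural conclusion $0\le u\le v$ with $v$ an explicit linear combination of the functions $r^{2j}$, $r^{2j+2-n}$ (with logarithms at repeated roots), each of which is $O(\Gamma(r))$ as $r\to0^+$.
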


By Remark~\ref{rem1.1}, the bound \eqref{eq1.4} for $u$ in
Proposition~\ref{pro1.1} is optimal.

Theorems \ref{thm1.4} and \ref{thm1.7} are special cases of much more
general results, in which, instead of obtaining pointwise upper bounds
(when they exist) for $u$ where $u$ is a nonnegative solution of
\[
0 \le -\Delta^m u\le (u+1)^\lambda\quad \text{in}\quad B_2(0)\backslash\{0\},
\]
we obtain pointwise upper bounds (when they exist) for $|D^i u|$,
$i=0,1,2,\ldots, 2m-1$, where $u$ is a solution of
\[
0\le -\Delta^mu \le \sum^{2m-1}_{k=0} 
|x|^{-\alpha_k}(|D^ku|+g_k(x))^{\lambda_k} 
\quad \text{in}\quad B_2(0)\backslash \{0\}\subset {\bb R}^n
\]
such that
\[ 
|x|^{n-2}u(x) \quad \text{is bounded below in $B_2(0) 
\backslash \{0\}$},
\]
where the functions $g_k(x)$ tend to
infinity as $x\to 0$.  See Theorems~\ref{thm3.1} and \ref{thm3.2} in
Section~\ref{sec3} for the precise statements of these more general
results.

Estimates for some derivatives of nonnegative solutions of
\eqref{eq1.1} when $m$ and $n$ satisfy (i) were obtained in
\cite{GMT}.

If $m\ge 1$ and $n\ge 2$ are integers satisfying (i) then, according
to Theorem \ref{thm1.1}, $u$ satisfies a pointwise upper bound as
$x\to 0$ without imposing an upper bound $f(u)$ on $-\Delta^mu$. On
the other hand, if $m$ and $n$ do not satisfy (i) then according to
Theorems \ref{thm1.2}--\ref{thm1.11}, $u$ satisfies a pointwise upper
bound as $x\to 0$ if and only if an appropriate upper bound $f(u)$ is
placed on $-\Delta^m u$. This is due to the following two reasons.
\begin{enumerate}
\item According to formulas \eqref{eq2.1}--\eqref{eq2.3} for the
  fundamental solution $\Phi$ of $\Delta^m$ in ${\bb R}^n$, $\Phi$ is
  bounded below in $B_1(0)\backslash\{0\}$ if and only if $m$ and $n$
  satisfy (i).
\item There is a term in a decomposed version of the representation
  formula \eqref{eq2.6} for nonnegative solutions $u$ of \eqref{eq2.5}
  which is bounded above when $\Phi$ is bounded below. However, when
  $\Phi$ is not bounded below, one needs an upper bound on $-\Delta^m
  u$ to estimate this term. The crux of many of the proofs consists of
  obtaining this estimate.
\end{enumerate}

The term referred to in 2 can be thought of as the convolution
\begin{equation}\label{eqnew1.3}
\intl_{|y|<1} \Phi(x-y)\Delta^m u(y)\,dy.
\end{equation}
However it may happen when $m\ge 2$ that $-\Delta^m u\not\in
L^1(B_1(0))$, in which case this convolution is not finite for every
$x\in {\bb R}^n$. This difficulty is overcome in Lemma \ref{lem2.1} by
replacing $\Phi(x-y)$ in \eqref{eqnew1.3} with the difference of
$\Phi(x-y)$ and a partial sum of the Taylor series of $\Phi$ at $x$.

\section{Results for Question 2}\label{sec1.3}

In this section we state our results for Question~2.

As noted in \cite{GMT}, by applying the $m$-Kelvin transform
\eqref{eq1.26} to the function $u$ in Theorem \ref{thm1.1}, we
immediately obtain the following result concerning Question 2 when $m$
and $n$ satisfy condition (i) at the beginning of Section \ref{sec1.2}.

\begin{thm}\label{thm1.13} 
  Suppose $m\ge 1$ and $n\ge 2$ are integers satisfying (i) and
  $f\colon [0,\infty)\to [0,\infty)$ is a continuous function. Let
  $v(y)$ be a $C^{2m}$ nonnegative solution of \eqref{eq1.25} or, more
  generally, of
\[
 -\Delta^m v\ge 0\quad \text{in}\quad {\bb R}^n\backslash B_{1/2}(0).
\]
Then
\begin{equation}\label{eq1.28}
 v(y) = O(\Gamma_\infty(|y|))\quad \text{as}\quad |y|\to \infty,
\end{equation}
where 
\[
 \Gamma_{\infty}(r) = \begin{cases}
              r^{2m-2},&\text{if $n\ge 3$;}\\
r^{2m-2}\log 5r,&\text{if $n=2$.}
             \end{cases}
\]
\end{thm}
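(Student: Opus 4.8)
The plan is to deduce this theorem immediately from Theorem~\ref{thm1.1} by means of the $m$-Kelvin transform \eqref{eq1.26}. Given a $C^{2m}$ nonnegative function $v(y)$ with $-\Delta^m v\ge 0$ in ${\bb R}^n\backslash B_{1/2}(0)$, I would set $u(x)=|x|^{n-2m}v(y)$ where $x=y/|y|^2$, so that $|x|=1/|y|$. Since $|y|\ge 1/2$ corresponds to $0<|x|\le 2$, the function $u$ is defined and $C^{2m}$ on $B_2(0)\backslash\{0\}$, and $u\ge 0$ because $v\ge 0$.

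The first real step is to check that $u$ satisfies \eqref{eq2.5}. By \eqref{eq1.27},
\[
\Delta^m u(x)=|x|^{n+2m}\,\Delta^m v(y),
\]
and since $|x|^{n+2m}>0$ for $x\ne 0$ while $\Delta^m v(y)\le 0$, we get $-\Delta^m u(x)\ge 0$ for $x\in B_2(0)\backslash\{0\}$. Thus $u$ is a $C^{2m}$ nonnegative solution of \eqref{eq2.5}, and since $m$ and $n$ satisfy (i), Theorem~\ref{thm1.1} applies and yields
\[
u(x)=O(\Gamma(|x|))\quad\text{as } x\to 0.
\]

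Finally I would translate this estimate back to $v$. From $u(x)=|x|^{n-2m}v(y)$ we have $v(y)=|x|^{-(n-2m)}u(x)=|y|^{2m-n}u(x)$, and as $|y|\to\infty$ the point $x=y/|y|^2$ satisfies $|x|=1/|y|\to 0$, so
\[
v(y)=|y|^{2m-n}\,O\bigl(\Gamma(1/|y|)\bigr)\quad\text{as }|y|\to\infty.
\]
It then remains only to evaluate $|y|^{2m-n}\Gamma(1/|y|)$ using \eqref{eq1.3}: when $n\ge 3$ this equals $|y|^{2m-n}|y|^{n-2}=|y|^{2m-2}$, and when $n=2$ it equals $|y|^{2m-2}\log(5|y|)$. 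In both cases this is precisely $\Gamma_\infty(|y|)$, which establishes \eqref{eq1.28}.

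There is no genuinely hard step here: the argument is a routine inversion of coordinates combined with Theorem~\ref{thm1.1}, exactly as indicated in the sentence preceding the theorem and attributed to \cite{GMT}. The only points requiring any care are the correspondence of the exterior domain $\{|y|\ge 1/2\}$ with the punctured ball $B_2(0)\backslash\{0\}$ under inversion, the sign bookkeeping in $\Delta^m u(x)=|x|^{n+2m}\Delta^m v(y)$, and the elementary identity $|y|^{2m-n}\Gamma(1/|y|)=\Gamma_\infty(|y|)$.
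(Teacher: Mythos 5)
Your proposal is correct and is exactly the paper's argument: the paper proves Theorem \ref{thm1.13} precisely by applying the $m$-Kelvin transform \eqref{eq1.26} to reduce to Theorem \ref{thm1.1}, and your domain correspondence, sign bookkeeping via \eqref{eq1.27}, and the identity $|y|^{2m-n}\Gamma(1/|y|)=\Gamma_\infty(|y|)$ are the whole content. One notational slip to fix: the inverse transform is $u(x)=|x|^{2m-n}v(y)$ (equivalently $v(y)=|y|^{2m-n}u(x)$, which is the relation you actually use at the end), not $u(x)=|x|^{n-2m}v(y)$ as you wrote; correspondingly $\Delta^m u(x)=|y|^{n+2m}\Delta^m v(y)=|x|^{-(n+2m)}\Delta^m v(y)$. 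Since these prefactors are positive, the sign argument and the final asymptotics are unaffected, so the slip is harmless but should be corrected for consistency.
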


The estimate \eqref{eq1.28} is optimal because $\Delta^m\Gamma_\infty(|y|)=0$
in ${\bb R}^n\backslash\{0\}$.

Using the $m$-Kelvin transform and Theorems~\ref{thm3.1},
\ref{thm3.2}, and \ref{thm3.3} in Section~\ref{sec3} we will prove in
Section~\ref{sec4} the following three theorems dealing with
Question~2, the first of which deals with the case that $m$ and $n$
satisfy condition (iv) at the beginning of Section \ref{sec1.2}.

\begin{thm}\label{thm1.14}
  Let $v(y)$ be a $C^{2m}$ nonnegative solution of \eqref{eq1.25}
  where the integers $m$ and $n$ satisfy $($iv$)$ and $f\colon
  [0,\infty)\to [0,\infty)$ is a continuous function satisfying
\[
f(t) = O(t^\lambda)\quad \text{as}\quad t\to\infty
\]
where 
\[
0 < \lambda < \frac{n}{n-2m}.
\]
Then
\begin{equation}\label{eq1.29}
v(y) = o(|y|^a) \quad \text{as}\quad |y|\to\infty
\end{equation}
where
\[
a=\frac{2m(n-2)}{n-\lambda(n-2m)}
 = 2m-2+ \frac{2(1+\lambda(m-1))}{n-\lambda(n-2m)}(n-2m).
\]
\end{thm}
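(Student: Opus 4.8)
The plan is to deduce Theorem~\ref{thm1.14} from the general interior results of Section~\ref{sec3} by applying the $m$-Kelvin transform \eqref{eq1.26}, \eqref{eq1.27} to $v$; this is the route announced in the paragraph preceding the statement.

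\emph{Step 1 (reduction to a solution bounded below by $|y|^{2m-2}$).} Since $f$ is continuous and $f(t)=O(t^\lambda)$ as $t\to\infty$, there is a constant $C_0>0$ with $f(t)\le C_0(1+t^\lambda)$ for all $t\ge 0$. The function $\Gamma_\infty(|y|)=|y|^{2m-2}=(|y|^2)^{m-1}$ is a polynomial, so $\Delta^m\Gamma_\infty\equiv 0$; hence $V(y):=v(y)+|y|^{2m-2}$ is again a $C^{2m}$ nonnegative function on ${\bb R}^n\backslash B_{1/2}(0)$ with $-\Delta^m V=-\Delta^m v\ge 0$ and $-\Delta^m V\le f(v)\le C_0(1+v^\lambda)\le C_0(1+V^\lambda)$, and in addition $V(y)\ge |y|^{2m-2}$. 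A short computation gives $a-(2m-2)=\frac{2(n-2m)(1+\lambda(m-1))}{n-\lambda(n-2m)}>0$, the denominator being positive because $\lambda<\frac{n}{n-2m}$; hence $|y|^{2m-2}=o(|y|^a)$, and it suffices to prove $V(y)=o(|y|^a)$ as $|y|\to\infty$.

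\emph{Step 2 (the transformed inequality).} Let $U$ be the $m$-Kelvin transform of $V$, namely $U(x)=|x|^{-(n-2m)}V(x/|x|^2)$ for $x\in B_2(0)\backslash\{0\}$; then $U$ is $C^{2m}$ and nonnegative there, and, the $m$-Kelvin transform being an involution, $V$ is the $m$-Kelvin transform of $U$, so \eqref{eq1.27} gives $-\Delta^m U(x)=|x|^{-(n+2m)}(-\Delta^m V(y))$ with $y=x/|x|^2$. Put $\alpha_0:=n+2m-\lambda(n-2m)$ and $g_0(x):=|x|^{-(n-2m)}$. Using $V(y)=|x|^{n-2m}U(x)$, Step~1, the identities $|x|^{-(n+2m)}=|x|^{-\alpha_0}g_0(x)^\lambda$ and $|x|^{-(n+2m)}|x|^{\lambda(n-2m)}=|x|^{-\alpha_0}$, and $s^\lambda+t^\lambda\le 2(s+t)^\lambda$ for $s,t\ge 0$, one obtains
\[
0\le -\Delta^m U(x)\le C_0|x|^{-(n+2m)}\bigl(1+|x|^{\lambda(n-2m)}U(x)^\lambda\bigr)\le 2C_0\,|x|^{-\alpha_0}\bigl(U(x)+g_0(x)\bigr)^{\lambda_0}
\]
in $B_2(0)\backslash\{0\}$, where $\lambda_0:=\lambda$ and $g_0(x)\to\infty$ as $x\to 0$. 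Finally $|x|^{n-2}U(x)=|x|^{2m-2}V(x/|x|^2)\ge |x|^{2m-2}|x|^{-(2m-2)}=1$, so $|x|^{n-2}U(x)$ is bounded below in $B_2(0)\backslash\{0\}$. Thus $U$ is a solution of the kind treated by the general case-(iv) result of Section~\ref{sec3} (Theorem~\ref{thm3.1}, the general form of Theorem~\ref{thm1.4}), with one nonlinear term of weight $\alpha_0$, growth exponent $\lambda_0\in(0,\tfrac{n}{n-2m})$, and blow-up shift $g_0$.

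\emph{Step 3 (conclusion).} Applying Theorem~\ref{thm3.1} to $U$ yields $U(x)=o(|x|^{-b})$ as $x\to 0$, where the exponent it produces equals, by Step~1, $b=a+(n-2m)=n-2+\frac{2(n-2m)(1+\lambda(m-1))}{n-\lambda(n-2m)}$. Transforming back,
\[
v(y)\le V(y)=|y|^{-(n-2m)}U(y/|y|^2)=|y|^{-(n-2m)}\,o\bigl(|y|^{b}\bigr)=o\bigl(|y|^{\,b-(n-2m)}\bigr)=o(|y|^a),
\]
which is \eqref{eq1.29}. The real content sits in Section~\ref{sec3}; the main thing to get right here is Step~2---pushing $0\le -\Delta^m v\le f(v)$ through \eqref{eq1.27} into a weighted inequality whose weight, growth exponent, and blow-up shift lie in the admissible range of Theorem~\ref{thm3.1}, together with confirming that Theorem~\ref{thm3.1} returns exactly the exponent $a+(n-2m)$ (and lands in the regime giving the sharp $o$-bound rather than merely $O(|x|^{-(n-2)})$), so that the inverse Kelvin transform reproduces the stated value $a=\frac{2m(n-2)}{n-\lambda(n-2m)}$.
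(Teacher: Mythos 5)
Your proposal is correct and follows essentially the same route as the paper: apply the $m$-Kelvin transform, recast $0\le-\Delta^m v\le K(v+1)^\lambda$ as the weighted inequality $0\le-\Delta^m u\le K|x|^{-\alpha_0}(u+g_0(x))^\lambda$ with $\alpha_0=n+2m-\lambda(n-2m)$ and $g_0(x)=|x|^{-(n-2m)}$, invoke part (ii) of Theorem~\ref{thm3.1} (with harmless dummy choices for the terms $k\ge1$ so that \eqref{al} holds), and invert the transform; the exponent arithmetic you assert, namely $b=\frac{2(1+\lambda(m-1))}{n-\lambda(n-2m)}>0$ and $a_0=n-2+b(n-2m)=a+(n-2m)$, does check out. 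Your Step 1 (adding $|y|^{2m-2}$) is harmless but unnecessary, since $u\ge 0$ already makes $|x|^{n-2}u(x)$ bounded below, which is all that \eqref{eqnew1.1} requires.
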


The next two theorems deal with Question~2 when $m$ and $n$ satisfy 
condition (v) at the beginning of Section \ref{sec1.2}.

\begin{thm}\label{thm1.15}
  Let $v(y)$ be a $C^{2m}$ nonnegative solution of \eqref{eq1.25}
  where the integers $m$ and $n$ satisfy $($v$)$ and $f\colon
  [0,\infty)\to [0,\infty)$ is a continuous function satisfying
\[
f(t) = O(t^\lambda)\quad \text{as}\quad t\to\infty
\]
where $\lambda>0$. Then
\begin{equation}\label{new}
v(y) = o(|y|^{n-2} \log 5|y|) \quad \text{as}\quad |y|\to\infty.
\end{equation}
\end{thm}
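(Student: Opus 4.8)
The plan is to deduce Theorem~\ref{thm1.15} from the general interior result Theorem~\ref{thm3.2} of Section~\ref{sec3} by means of the $m$-Kelvin transform, in the same way that Theorem~\ref{thm1.13} was obtained from Theorem~\ref{thm1.1}. Let $v(y)$ be as in the statement and let $u(x)$ be the $m$-Kelvin transform \eqref{eq1.26} of $v$. Since $m$ and $n$ satisfy (v) we have $n=2m$, so $u(x)=|y|^{n-2m}v(y)=v(y)$ where $y=x/|x|^2$, and $u$ is a $C^{2m}$ nonnegative function on $B_2(0)\backslash\{0\}$ because $|x|<2$ corresponds to $|y|>1/2$. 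As the $m$-Kelvin transform is an involution, \eqref{eq1.27} (with the roles of $u$ and $v$ interchanged) gives $\Delta^m u(x)=|y|^{n+2m}\Delta^m v(y)=|x|^{-2n}\Delta^m v(y)$, using $|y|=1/|x|$ and $n+2m=2n$. Consequently
\[
 0\le -\Delta^m u(x)\le |x|^{-2n}f(v(y))=|x|^{-2n}f(u(x))\quad\text{in }B_2(0)\backslash\{0\}.
\]

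Next I would fit this inequality into the hypotheses of Theorem~\ref{thm3.2}. Since $f$ is continuous on $[0,\infty)$ and $f(t)=O(t^\lambda)$ as $t\to\infty$, there are a constant $C$ and an exponent $\lambda_0>0$ (for instance $\lambda_0=\max\{\lambda,1\}$) with $f(t)\le C(1+t)^{\lambda_0}$ for all $t\ge 0$, whence
\[
 0\le -\Delta^m u(x)\le C|x|^{-2n}\bigl(u(x)+1\bigr)^{\lambda_0}\quad\text{in }B_2(0)\backslash\{0\}.
\]
This is the hypothesis of Theorem~\ref{thm3.2} in the case that only the zeroth-order term is present: weight exponent $\alpha_0=2n$, nonlinear exponent $\lambda_0$, and lower-order function $g_0\equiv 1$. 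The remaining hypothesis of Theorem~\ref{thm3.2}, that $|x|^{n-2}u(x)$ be bounded below on $B_2(0)\backslash\{0\}$, holds trivially because $u\ge 0$. Applying Theorem~\ref{thm3.2} then produces a pointwise upper bound for $u(x)$ as $x\to 0$, which I expect to be $u(x)=o\bigl(|x|^{-(n-2)}\log\frac5{|x|}\bigr)$ for every $\lambda_0>0$: under condition~(v) the exponent $\frac n{n-2m}$ which restricts the growth of the nonlinearity in the case~(iv) results is infinite, so no upper restriction on $\lambda_0$ is needed, and the borderline logarithmic behavior of the fundamental solution of $\Delta^m$ when $n=2m$ forces the estimate to saturate at $|x|^{-(n-2)}\log\frac5{|x|}$ no matter how large the weight exponent $\alpha_0=2n$ is. Transforming back via $v(y)=u(x)$ with $|x|=1/|y|$ converts this into
\[
 v(y)=o\bigl(|y|^{n-2}\log 5|y|\bigr)\quad\text{as }|y|\to\infty,
\]
which is \eqref{new}.

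Since essentially all of the substance of the argument is carried inside Theorem~\ref{thm3.2}, the only part of the present reduction requiring genuine care is checking that the weight exponent $\alpha_0=2n$ produced by the Kelvin transform lies in the admissible range of Theorem~\ref{thm3.2}, and that the $o(\cdot)$ estimate it returns is exactly the $\lambda_0$-independent quantity $|x|^{-(n-2)}\log\frac5{|x|}$. This is precisely where the special structure of condition~(v) enters, via the delicate estimate of the potential term in the decomposed representation formula for $u$; that estimate is the main obstacle, and it is handled in Section~\ref{sec3} rather than here.
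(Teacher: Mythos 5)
Your proposal is essentially the paper's own proof: the paper also treats $v$ as the $m$-Kelvin transform of $u$, uses \eqref{eq1.27} to get $0\le-\Delta^m u\le K|x|^{-2n}(u+1)^{\lambda}$ in $B_2(0)\backslash\{0\}$ (with $u=v$ since $n=2m$), and then invokes part (ii) of Theorem~\ref{thm3.2}, transforming the resulting estimate back to \eqref{new}. Two small points you left informal should be pinned down: part (ii) of Theorem~\ref{thm3.2} requires $\lambda_k>0$ for \emph{every} $k$, so you cannot apply it with ``only the zeroth-order term present''; instead, as in the paper, pad the right side with harmless terms, e.g.\ $\lambda_k=1$, $\alpha_k=-(n-2+k)$, $g_k\equiv1$ for $k\ge1$ (these give $b_k<0$ and satisfy \eqref{gk2}), and verify explicitly that $b=b_0=\frac{\lambda(n-2)+2}{n}>0$ and that $g_0\equiv1$ satisfies \eqref{g02}, so that the hypotheses of part (ii) — rather than a hoped-for ``$\lambda_0$-independent'' conclusion — actually deliver $u(x)=o\bigl(|x|^{-(n-2)}\log\frac5{|x|}\bigr)$.
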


\begin{thm}\label{thm1.16}
  Let $v(y)$ be a $C^{2m}$ nonnegative solution of \eqref{eq1.25}
  where the integers $m$ on $n$ satisfy $($v$)$ and $f\colon
  [0,\infty)\to[0,\infty)$ is a continuous function satisfying
\[
\log(1+f(t)) = O(t^\lambda)\quad \text{as}\quad t\to\infty
\]
where $0<\lambda<1$. Then
\[
v(y) = o(|y|^{\frac{n-2}{1-\lambda}}) \quad \text{as}\quad |y|\to\infty.
\]
\end{thm}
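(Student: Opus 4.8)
\textbf{Proof plan for Theorem~\ref{thm1.16}.}

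The plan is to transfer the problem to a punctured neighborhood of the origin by means of the $m$-Kelvin transform and then invoke the weighted interior estimates of Section~\ref{sec3}. Since $m$ and $n$ satisfy (v) we have $n=2m$, so the weight $|x|^{n-2m}$ in \eqref{eq1.26} is identically $1$ and the transform degenerates to the plain change of variables $u(x):=v(y)$, $y=x/|x|^2$. As $x$ ranges over $B_2(0)\backslash\{0\}$ the image point $y=x/|x|^2$ satisfies $|y|=1/|x|>1/2$, so $u$ is a well-defined $C^{2m}$ nonnegative function on $B_2(0)\backslash\{0\}$ obtained by restricting and transforming the given solution $v$.

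First I would write down the differential inequality inherited by $u$. By \eqref{eq1.27} and $n=2m$,
\[
\Delta^m v(y)=|x|^{n+2m}\Delta^m u(x)=|x|^{4m}\Delta^m u(x),\qquad x=y/|y|^2,
\]
so that $-\Delta^m u(x)=|x|^{-4m}\bigl(-\Delta^m v(y)\bigr)$ and the hypothesis $0\le -\Delta^m v\le f(v)$ becomes
\[
0\le -\Delta^m u(x)\le |x|^{-4m} f\bigl(u(x)\bigr)\quad\text{in } B_2(0)\backslash\{0\}.
\]
Because $f$ is continuous and $\log(1+f(t))=O(t^\lambda)$ with $0<\lambda<1$, there are constants $C,C_1>0$ with $1+f(t)\le C_1 e^{Ct^\lambda}$ for all $t\ge 0$; hence the right-hand side above is of the weighted exponential type $|x|^{-\alpha}(\,\cdot\,)$, with weight exponent $\alpha=n+2m$, that is admitted by the general theorem of Section~\ref{sec3}. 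Moreover $u\ge 0$, so $|x|^{n-2}u(x)\ge 0$ is trivially bounded below in $B_2(0)\backslash\{0\}$, which is the remaining structural hypothesis there.

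Applying Theorem~\ref{thm3.3} to $u$ then yields
\[
u(x)=o\Bigl(|x|^{-\frac{n-2}{1-\lambda}}\Bigr)\quad\text{as } x\to 0,
\]
the key point being that when $0<\lambda<1$ the weight $|x|^{-(n+2m)}$ is subcritical — it is dominated by the exponential growth and contributes only to lower-order terms — so the leading exponent is exactly the one in Theorem~\ref{thm1.9}. Undoing the change of variables ($v(y)=u(x)$, $|x|=1/|y|$) converts this into $v(y)=o\bigl(|y|^{(n-2)/(1-\lambda)}\bigr)$ as $|y|\to\infty$, which is the assertion of the theorem.

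Nearly all of the work is thus carried by Theorem~\ref{thm3.3}, whose proof rests on the representation formula of Lemma~\ref{lem2.1}: one decomposes $u$ into a $2m$-polyharmonic piece and a convolution of $\Delta^m u$ against a Taylor-corrected fundamental solution $\Phi$, and — since in case (v) $\Phi$ is not bounded below — uses the available pointwise upper bound on $-\Delta^m u$ together with Riesz potential estimates to control that convolution term, iterating to sharpen the bound. Within the reduction carried out here the only points needing care are the degeneration $n-2m=0$ of the Kelvin transform (special to case (v)), the bookkeeping of the weight $|x|^{-(n+2m)}$ produced by \eqref{eq1.27}, and the verification that this weight is genuinely subcritical for $\lambda<1$ so that the exponent $(n-2)/(1-\lambda)$ is unaltered; I expect this last matching of the transformed inequality to the hypotheses of Theorem~\ref{thm3.3} to be the main — and only mild — obstacle.
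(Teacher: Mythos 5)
Your proposal is correct and takes essentially the same route as the paper: the (degenerate, since $n=2m$) $m$-Kelvin transform moves the singularity to the origin, the resulting weight $|x|^{-(n+2m)}=|x|^{-2n}$ is absorbed as a lower-order term in the exponent (the role of the function $g$ in Theorem~\ref{thm3.3}, since $(\log\frac1{|x|})^{1/\lambda}=o(|x|^{-\frac{n-2}{1-\lambda}})$), and Theorem~\ref{thm3.3} then gives the decay at the origin, which transforms back to the stated bound at infinity. The only detail the paper makes explicit that you leave implicit is the preliminary scaling of $v$ that normalizes the constant $C$ in $f(t)\le C_1e^{Ct^\lambda}$ so that the transformed inequality matches the hypothesis $e^{u^\lambda+g^\lambda}$ of Theorem~\ref{thm3.3} exactly.
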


Theorems \ref{thm1.14}--\ref{thm1.16} are optimal for Question~2 in
the same way that Theorems~\ref{thm1.4}, \ref{thm1.7}, and
\ref{thm1.9} are optimal for Question~1. For example, according to the
following theorem, the bound \eqref{eq1.29} in Theorem~\ref{thm1.14}
is optimal. We will omit the precise statements and proofs of the other
optimality results for Theorems~\ref{thm1.14}--\ref{thm1.16}.

\begin{thm}\label{thm1.17}
  Suppose $m$ and $n$ are integers satisfying (iv) and $\lambda$ and
  $a$ are constants satisfying
\begin{equation}\label{eq1.30}
0 < \lambda < \frac{n}{n-2m}\quad \text{and}\quad 
a= \frac{2m(n-2)}{n-\lambda(n-2m)}.
\end{equation}
Let $\varphi\colon (1,\infty)\to (0,1)$ be a continuous function
satisfying $\lim\limits_{r\to\infty} \varphi(r) = 0$. Then there
exists a $C^\infty$ positive solution $v(y)$ of
\[
0 \le - \Delta^m v \le v^\lambda\quad \text{in}\quad {\bb R}^n\backslash\{0\}
\]
such that
\[
v(y) \ne O(\varphi(|y|) |y|^a)\quad \text{as}\quad |y|\to\infty.
\]
\end{thm}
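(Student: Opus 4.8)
\medskip
\noindent\emph{Proof plan.}
The idea is to deduce Theorem~\ref{thm1.17} from a construction for Question~1 by means of the $m$-Kelvin transform \eqref{eq1.26}, exactly as Theorem~\ref{thm1.5} is the Question~1 optimality counterpart of Theorem~\ref{thm1.4}. Put
\[
\alpha:=(n+2m)-\lambda(n-2m),\qquad b:=a+(n-2m).
\]
By \eqref{eq1.30} we have $\alpha-2m=n-\lambda(n-2m)>0$, and using the second expression for $a$ in Theorem~\ref{thm1.14},
\[
b=(n-2)+\frac{2\big(1+\lambda(m-1)\big)(n-2m)}{n-\lambda(n-2m)}>n-2 .
\]
Set $\tilde\varphi(r):=\varphi(1/r)$ for $0<r<1$; then $\tilde\varphi\colon(0,1)\to(0,1)$ is continuous with $\lim_{r\to0^+}\tilde\varphi(r)=0$. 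I claim it suffices to produce a $C^\infty$ positive function $u$ on ${\bb R}^n\backslash\{0\}$ with
\[
0\le-\Delta^m u\le|x|^{-\alpha}u^\lambda\ \ \text{in }{\bb R}^n\backslash\{0\}
\qquad\text{and}\qquad
u(x)\ne O\big(\tilde\varphi(|x|)\,|x|^{-b}\big)\ \ \text{as }x\to0 .
\]
Indeed, for its $m$-Kelvin transform $v(y)=|x|^{n-2m}u(x)$, $x=y/|y|^2$, formula \eqref{eq1.27} gives $-\Delta^m v(y)=|x|^{n+2m}(-\Delta^m u(x))$, so (since $|x|=1/|y|$)
\[
0\le-\Delta^m v=|x|^{n+2m}(-\Delta^m u)\le|x|^{n+2m}|x|^{-\alpha}u^\lambda=|x|^{\lambda(n-2m)}u^\lambda=v^\lambda
\quad\text{in }{\bb R}^n\backslash\{0\};
\]
moreover $v$ is $C^\infty$ and positive on ${\bb R}^n\backslash\{0\}$, and if $v(y)\le C\varphi(|y|)|y|^a$ held for all large $|y|$ then $u(x)=|x|^{-(n-2m)}v(y)\le C\tilde\varphi(|x|)|x|^{-b}$ would hold for all small $|x|$, contrary to the choice of $u$; hence $v(y)\ne O(\varphi(|y|)|y|^a)$ as $|y|\to\infty$, which is the conclusion of Theorem~\ref{thm1.17}.

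\smallskip
\noindent Such a $u$ I would construct as in the proof of Theorem~\ref{thm1.5}. Fix a nonnegative profile $\eta\in C^\infty_c(B_{1/2}(0))$ with $\eta(0)=1$; choose points $x_j\in B_{1/2}(0)\backslash\{0\}$ with $x_j\to0$ and $|x_{j+1}|$ much smaller than $|x_j|$; and choose widths $\rho_j$ (a fixed power of $|x_j|$, small compared with $|x_j|$) and heights $M_j>0$ such that the balls $B_{\rho_j}(x_j)$ are pairwise disjoint. With $\psi:=\sum_j M_j\,\eta\big((\,\cdot\,-x_j)/\rho_j\big)\ge0$ (supported in $\bigcup_j B_{\rho_j}(x_j)\subset B_{1/2}(0)\backslash\{0\}$), take schematically
\[
u:=c_0\,\Gamma(|x|)-\intl_{|y|<1}\big[\Phi(x-y)-T(x,y)\big]\psi(y)\,dy ,
\]
where $\Gamma$ is given by \eqref{eq1.3}, $c_0>0$ is a constant, $\Phi$ is the fundamental solution of $\Delta^m$ in ${\bb R}^n$ (so that $-\Phi>0$ on ${\bb R}^n\backslash\{0\}$ in case (iv), since $\Phi$ is not bounded below near the origin then), and $T(x,y)$ is a Taylor polynomial in $y$ of $\Phi(x-y)$ about $y=0$ of sufficiently high order, inserted as in Lemma~\ref{lem2.1} so that the integral converges even though $\psi\notin L^1$ near the origin. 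Then $-\Delta^m u=\psi\ge0$, $\psi$ vanishes outside $\bigcup_j B_{\rho_j}(x_j)$, so the inequality $-\Delta^m u\le|x|^{-\alpha}u^\lambda$ is automatic there; and on $B_{\rho_j}(x_j)$ one has $|x|\asymp|x_j|$ and $u\gtrsim\max\{c_0\Gamma(|x_j|),\,M_j\rho_j^{2m}\}$, so on that ball the inequality reduces to an upper bound on $M_j$ which is a power of $|x_j|$. Taking $M_j$ as large as this allows makes the $j$-th bump contribute an amount of order $|x_j|^{-b}$ to $u(x_j)$ --- precisely the rate that, under the Kelvin transform, matches the upper bound of Theorem~\ref{thm1.14}. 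Finally, by sending $x_j\to0$ fast enough one arranges $u(x_j)\big/\big(\tilde\varphi(|x_j|)\,|x_j|^{-b}\big)\to\infty$, so $u\ne O(\tilde\varphi(|x|)|x|^{-b})$ as $x\to0$; transforming back by \eqref{eq1.26} yields the desired $v$.

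\smallskip
\noindent The main obstacle is the parameter bookkeeping together with the sign requirement $-\Delta^m u\ge0$. One must choose $\rho_j$, $M_j$ and $x_j$ simultaneously so that: (i) the integral defining $u$ converges and produces a $C^\infty$ positive function on all of ${\bb R}^n\backslash\{0\}$ --- positivity is the delicate point, because the remainder $\Phi(x-y)-T(x,y)$ is not sign-definite, which is why the dominating term $c_0\Gamma$ appears and why, as in the proof of Theorem~\ref{thm1.5}, the order of $T$ and the sizes of $\rho_j,M_j$ must be tuned so that the $j$-th bump dominates the remaining contributions near $x_j$; (ii) the inequality $0\le-\Delta^m u\le|x|^{-\alpha}u^\lambda$ holds on each $B_{\rho_j}(x_j)$; and (iii) the value $u(x_j)$ still reaches the critical order $|x_j|^{-b}$. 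The exponents are forced: the choices keeping $-\Delta^m u\le|x|^{-\alpha}u^\lambda$ cannot push $u(x_j)$ past $|x_j|^{-b}$, while the extremal choice attains it, which is exactly why $a=b-(n-2m)=\frac{2m(n-2)}{n-\lambda(n-2m)}$ is critical. Once $u$ is constructed, the $m$-Kelvin transform and the equivalences above complete the proof.
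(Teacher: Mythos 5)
Your reduction via the $m$-Kelvin transform \eqref{eq1.26} is exactly the paper's first step: the paper likewise reduces Theorem~\ref{thm1.17} to constructing a $C^\infty$ positive solution of $0\le-\Delta^mu\le|x|^{\tau}u^\lambda$ in ${\bb R}^n\backslash\{0\}$ with $\tau=\lambda(n-2m)-n-2m$ (your $-\alpha$) which violates $u(x)=O\bigl(\varphi(|x|^{-1})|x|^{-(a+n-2m)}\bigr)$, and then builds $u$ by the bump construction of Lemma~\ref{lem2.4} together with Case~1 of Remark~\ref{rem2.1}, just as in the proof of Theorem~\ref{thm1.5}. So in outline you are on the paper's route, and the transform bookkeeping ($\alpha$, $b=a+n-2m$, the equivalence of the two non-$O$ statements) is correct.

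The gap is in the parameter tuning you leave schematic, and as literally described it fails on part of the admissible range. Writing the bump heights as $M_j=\vp_j/(|x_j|^{2m-2}r_j^n)$, Lemma~\ref{lem2.1} forces the mass constraint $\sum_j\vp_j\lesssim\int|y|^{2m-2}(-\Delta^mu)\,dy<\infty$, so $\vp_j\to0$. If, as you propose, the widths $r_j$ are a fixed ($\varphi$-independent) power of $|x_j|$, the only admissible power is the critical one $r_j\asymp|x_j|^{1+2b_*}$ with $b_*=\frac{\lambda(m-1)+1}{n-\lambda(n-2m)}$, and then the constraint \eqref{eq2.36-2} reduces (using \eqref{eq4.18}) to $\vp_j^{\lambda-1}\ge 2^{|\tau|}A^{-\lambda}$, which is violated for large $j$ whenever $\lambda>1$ (and $1\le\lambda<\frac{n}{n-2m}$ is allowed here). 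Relatedly, the extremal bump does not ``contribute an amount of order $|x_j|^{-b}$'': exact order $|x_j|^{-(a+n-2m)}$ is unattainable, since the constructed $u$ satisfies the hypotheses of Theorem~\ref{thm3.1}(ii), whose conclusion is $u(x)=o(|x|^{-(a+n-2m)})$. What the construction does yield, after enlarging the widths to $r_j\asymp|x_j|^{1+2b_*}\psi(|x_j|)^{-\lambda/(n-\lambda(n-2m))}$, is $u(x_j)\gtrsim\psi(|x_j|)^{\frac{n}{n-\lambda(n-2m)}}|x_j|^{-(a+n-2m)}$; the paper's remedy is to take $\psi(r)=\max\bigl\{\varphi(r^{-1})^p,\ r^{b_*\frac{n-\lambda(n-2m)}{\lambda}}\bigr\}$ with $p=\frac{n-\lambda(n-2m)}{2n}$ (the second term in the max guaranteeing $r_j\le|x_j|/5$ along a subsequence, the first guaranteeing $\sum\vp_j<\infty$ for a sparse sequence $x_j$), so that $u(x_j)\gtrsim\varphi(1/|x_j|)^{1/2}|x_j|^{-(a+n-2m)}$, which defeats $O(\varphi)$ because $\varphi\to0$. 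With this correction your plan coincides with the paper's proof; without it, the crucial step ``the extremal choice attains $|x_j|^{-b}$'' is both false and, for $\lambda\ge1$, not even approachable with $\varphi$-independent widths.
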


See \cite[Corollary 2.5]{T2} for the optimal result concerning Question 2 when
$m$ and $n$ satisfy (iii). We have no results for Question 2 when $m$
and $n$ satisfy (ii), but see \cite{AS} for some related results.

\section{Preliminary results}\label{sec2}

A fundamental solution of $\Delta^m$ in ${\bb R}^n$, where $m\ge 1$
and $n\ge 2$ are integers, is given by
\begin{numcases}{\Phi(x):=A}
(-1)^m |x|^{2m-n}, & if $2 \le 2m < n$; \label{eq2.1}\\
(-1)^{\frac{n-1}2}|x|^{2m-n}, & if $3\le n < 2m$ and  $n$ is odd; \label{eq2.2}\\
(-1)^{\frac{n}2} |x|^{2m-n} \log \frac5{|x|}, & if $2\le n \le 2m$ 
 and $n$ is even; \label{eq2.3}
\end{numcases}
where $A=A(m,n)$ is a \emph{positive} constant whose value may change
from line to line throughout this entire paper.
In the sense of distributions, $\Delta^m\Phi = \delta$, where $\delta$ is the Dirac mass at the origin in ${\bb R}^n$. For $x\ne 0$ and $y\ne x$, let
\begin{equation}\label{eq2.4}
 \Psi(x,y) = \Phi(x-y) - \sum_{|\alpha|\le 2m-3} \frac{(-y)^\alpha}{\alpha!} D^\alpha\Phi(x)
\end{equation}
be the error in approximating $\Phi(x-y)$ with the partial sum of degree $2m-3$ of the Taylor series of $\Phi$ at $x$.

The following lemma, which we proved in \cite{GMT}, gives representation formula \eqref{eq2.6} for nonnegative solutions of inequality \eqref{eq2.5}.
See \cite{FKM,FM} for similar results.

\begin{lem}\label{lem2.1}
Let $u(x)$ be a $C^{2m}$ nonnegative solution of \eqref{eq2.5}
where $m\ge 1$ and $n\ge 2$ are integers. Then $\intl_{|y|<1}
|y|^{2m-2}(-\Delta^m u(y))\,dy<\infty$ and
\begin{equation}\label{eq2.6}
 u = N+h + \sum_{|\alpha|\le 2m-2} a_\alpha D^\alpha\Phi\quad
 \text{in}\quad 
B_1(0)\backslash\{0\}
\end{equation}
where $a_\alpha, |\alpha|\le 2m-2$, are constants, $h\in C^\infty(B_1(0))$ is a solution of
\[
 \Delta^m h = 0\quad \text{in}\quad B_1(0),
\]
and 
\[
 N(x) = \intl_{|y|\le 1} \Psi(x,y) \Delta^mu(y)\,dy \quad \text{for}\quad x\ne 0.
\]
\end{lem}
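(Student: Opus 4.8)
The plan is to establish Lemma~\ref{lem2.1} by combining a standard elliptic regularity/capacity argument (to show the integrability $\int_{|y|<1}|y|^{2m-2}(-\Delta^m u)\,dy<\infty$) with a careful construction of the Newtonian-type potential $N$ using the truncated kernel $\Psi(x,y)$ in place of $\Phi(x-y)$. First I would observe that $\mu:=-\Delta^m u\ge 0$ defines a nonnegative Radon measure on $B_2(0)\setminus\{0\}$, and I would test the equation against a suitable cutoff of the fundamental solution $\Gamma$ (the biharmonic-capacity weight behaving like $|y|^{2m-2}$ near the origin) to extract the integrability statement; this is where the precise polyharmonic structure of $\Delta^m$ matters, since one cannot test against $\Gamma$ directly but rather against $(\eta\Gamma)$ with $\eta$ supported away from $0$ and $\partial B_2$, then let the support exhaust $B_1(0)\setminus\{0\}$ and use monotone convergence after absorbing boundary terms. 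The weight $|y|^{2m-2}$ is exactly what makes $\Psi(x,y)$ integrable in $y$: from the Taylor-remainder form \eqref{eq2.4}, for $|x|$ fixed and $y$ small one has $\Psi(x,y)=O(|y|^{2m-2})$ (the first $2m-2$ Taylor coefficients having been subtracted, so the error is controlled by the $(2m-2)$-nd derivatives of $\Phi$ times $|y|^{2m-2}$), which guarantees $N(x)$ is finite for every $x\neq 0$.

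Next I would define $w:=N$ and show $\Delta^m w=-\mu=\Delta^m u$ in the sense of distributions on $B_1(0)\setminus\{0\}$. The point here is that the subtracted polynomial $\sum_{|\alpha|\le 2m-3}\frac{(-y)^\alpha}{\alpha!}D^\alpha\Phi(x)$ is, for each fixed $y$, a polynomial in $x$ of degree $\le 2m-3 < 2m$, hence is annihilated by $\Delta_x^m$; therefore $\Delta_x^m\Psi(x,y)=\Delta_x^m\Phi(x-y)=\delta(x-y)$ distributionally, and integrating against $d\mu(y)$ gives $\Delta^m N=-\mu$ on any region where the interchange of $\Delta^m$ and the integral is justified (which the $|y|^{2m-2}$-integrability together with local derivative bounds on $\Psi$ away from the diagonal permits). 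Consequently $u-N$ is polyharmonic in $B_1(0)\setminus\{0\}$. Then I invoke the classical structure theorem for distributions (here, polyharmonic functions) with an isolated singularity: any solution of $\Delta^m v=0$ in $B_1(0)\setminus\{0\}$ decomposes as $v=h+\sum_{|\alpha|\le 2m-2}a_\alpha D^\alpha\Phi$ with $h$ polyharmonic in all of $B_1(0)$ and $a_\alpha$ constants — this is the Bôcher-type / Weyl-lemma argument for $\Delta^m$ (the admissible singular terms are exactly the derivatives of $\Phi$ of order $\le 2m-2$, matching the order of the measure one can produce at $0$). Applying this to $v=u-N$ yields \eqref{eq2.6}.

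The main obstacle I expect is twofold and both parts are bookkeeping-heavy rather than conceptually deep: (a) rigorously justifying the integrability $\int_{|y|<1}|y|^{2m-2}\,d\mu(y)<\infty$, which requires choosing the right test function built from $\Gamma$ (or from $|y|^{2m-2}$ directly when $n\ge 3$, with the logarithmic modification when $n=2$) and controlling the boundary contributions on $\partial B_\rho(0)$ as $\rho\to 0^+$ — one must exploit that $u\ge 0$ so that the "bad" boundary terms have a favorable sign, a standard but delicate maneuver for higher-order operators where the boundary integrand involves $u$ and its derivatives up to order $2m-1$; and (b) the interchange of $\Delta_x^m$ with $\int\cdots d\mu(y)$ and the verification that $N$ has the claimed regularity away from the origin, which hinges on the uniform-in-$y$ bounds $|D_x^\beta\Psi(x,y)|\le C(x)|y|^{2m-2-|\beta|+\text{(something)}}$ type estimates near $y=0$ and the local boundedness of $D_x^\beta\Phi(x-y)$ away from the diagonal. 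Once these are in hand, the identification of $h$ as the smooth polyharmonic part and the $a_\alpha$ as the coefficients of the singular part follows from the uniqueness in the polyharmonic Bôcher decomposition. I would also remark that the specific constant $5$ inside the logarithm in \eqref{eq1.3} and \eqref{eq2.3} is merely a normalization ensuring positivity of $\Gamma$ and $\Phi$ on the relevant balls and plays no role beyond making signs clean.
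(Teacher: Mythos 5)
Note first that the paper does not reprove this lemma; it simply cites \cite{GMT}, so your proposal has to stand on its own, and its skeleton (prove the weighted integrability, show $\Delta^m(u-N)=0$ in $B_1(0)\backslash\{0\}$, then remove the singularity by a B\^ocher-type decomposition) is indeed the right one. The trouble is that both places where the real work lies are handled by assertions that are not correct as stated. For the integrability $\intl_{|y|<1}|y|^{2m-2}(-\Delta^mu)\,dy<\infty$, your plan is to integrate by parts against a cutoff of $\Gamma$ (which, incidentally, behaves like $|x|^{-(n-2)}$, not like $|y|^{2m-2}$) and to dispose of the boundary contributions on $\partial B_\rho(0)$ because ``$u\ge0$ gives them a favorable sign.'' It does not: integrating $\Delta^m$ by parts over $B_1\backslash B_\rho$ produces boundary integrands involving $D^ku$ for all $k\le 2m-1$, whose signs are in no way controlled by $u\ge 0$, and for $m\ge 2$ there is no maximum principle or Harnack inequality to substitute. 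This step genuinely requires the super-polyharmonic structure (e.g.\ spherical means plus induction on $m$, or a careful selection of radii along which the intermediate derivatives are controlled, as in \cite{FM,GMT}); as written, the argument would not close.

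The second gap is the ``classical structure theorem'' you invoke: it is false that every solution of $\Delta^mv=0$ in $B_1(0)\backslash\{0\}$ has the form $h+\sum_{|\alpha|\le 2m-2}a_\alpha D^\alpha\Phi$. Indeed $D^\beta\Phi$ is $\Delta^m$-harmonic in the punctured ball for \emph{every} $\beta$, so without growth control the singular part can contain derivatives of arbitrarily high order (a Laurent-type expansion). What is true is that if $v=u-N$ extends as a distribution across the origin, then $\Delta^mv$ is a finite combination of derivatives of $\delta$ and one gets a finite sum; but extending $v$ and, crucially, bounding the order by $2m-2$ --- which is exactly the content the rest of the paper uses --- requires quantitative input: the nonnegativity of $u$, pointwise bounds on $N$ of the kind proved in Lemma~\ref{lem2.2}, and the weighted integrability from the first step. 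Your parenthetical ``matching the order of the measure one can produce at $0$'' gestures at this but is not an argument, so the restriction $|\alpha|\le 2m-2$ is unjustified in your write-up. A smaller but telling slip in the same part: the subtracted sum in \eqref{eq2.4} is the degree-$(2m-3)$ Taylor polynomial in $y$, whose coefficients $D^\alpha\Phi(x)$ are singular at $x=0$; it is not ``a polynomial in $x$,'' although the conclusion $\Delta_x^m\Psi(\cdot,y)=\delta_y$ away from the origin does hold because each $D^\alpha\Phi$ is annihilated by $\Delta^m$ off $0$.
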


\begin{lem}\label{lem2.2}
Suppose $f$ is locally bounded, nonnegative, and measurable in $\ovl{B_1(0)}\backslash\{0\} \subset {\bb R}^n$ and
\begin{equation}\label{eq2.8}
 \intl_{|y|<1} |y|^{2m-2} f(y)\,dy < \infty
\end{equation}
where $m\ge 2$ and $n\ge 2$ are integers, $m$ is odd, and $2m\le n$. Let
\begin{equation}\label{eq2.9}
 N(x) = \intl_{|y|<1}  - \Psi(x,y) f(y)\,dy \quad \text{for}\quad x\in {\bb R}^n\backslash\{0\}
\end{equation}
where $\Psi$ is given by \eqref{eq2.4}. Then $N\in C^{2m-1}({\bb R}^n\backslash\{0\})$. Moreover when $|\beta|<2m$ and either $2m=n$ and $|\beta|\ne 0$ or $2m<n$ we have
\begin{equation}\label{eq2.10}
 (D^\beta N)(x) = \intl_{\underset{\sst |y|<1}{|y-x|<|x|/2}} - (D^\beta\Phi)(x-y) f(y)\,dy + O(|x|^{2-n-|\beta|}) \quad \text{for}\quad x\ne 0
\end{equation}
and when $2m=n$ we have
\begin{equation}\label{eq2.11}
 N(x) = A \intl_{\underset{\sst |y|<1}{|y-x|<|x|/2}} \left(\log \frac{|x|}{|x-y|}\right) f(y)\,dy + O(|x|^{2-n}) \quad \text{for}\quad x\ne 0.
\end{equation}
\end{lem}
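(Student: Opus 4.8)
\textbf{Proof proposal for Lemma \ref{lem2.2}.}

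The plan is to differentiate under the integral sign in \eqref{eq2.9}, after splitting the region of integration into a part near the singularity $y=x$ and a part away from it, and then to estimate each piece using the explicit form \eqref{eq2.1}--\eqref{eq2.3} of $\Phi$ (here $\Phi(x)=A(-1)^m|x|^{2m-n}$ when $2m<n$, and $\Phi(x)=A(-1)^{n/2}|x|^{2m-n}\log\frac5{|x|}$ when $2m=n$, since $m$ odd and $2m\le n$ puts us in case \eqref{eq2.1} or \eqref{eq2.3}). First I would fix $x\ne 0$ and write $|y|<1$ as the disjoint union of $E_1=\{|y-x|<|x|/2,\ |y|<1\}$ and $E_2=\{|y-x|\ge|x|/2,\ |y|<1\}$. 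On $E_2$ the kernel $\Psi(x,\cdot)$ and all its $x$-derivatives up to order $2m-1$ are smooth in $y$ and bounded by a constant times $|x|^{2-n-|\beta|}$ uniformly (because on $E_2$ one has $|x-y|\gtrsim|x|$, and the Taylor-remainder structure of $\Psi$ in \eqref{eq2.4} — it kills the terms of degree $\le 2m-3$ — gives the extra decay needed to absorb the weight $|y|^{2m-2}$ against the finiteness hypothesis \eqref{eq2.8}); differentiation under the integral over $E_2$ is then justified by dominated convergence and contributes the $O(|x|^{2-n-|\beta|})$ error term in \eqref{eq2.10}. On $E_1$ the partial-sum part of $\Psi$ is a polynomial in $y$ with coefficients $D^\alpha\Phi(x)$, hence smooth, and its $\beta$-derivative in $x$ over $E_1$ is again $O(|x|^{2-n-|\beta|})$ by the same weight estimate; what survives from $E_1$ is exactly $\int_{E_1}-(D^\beta\Phi)(x-y)f(y)\,dy$, which is the main term in \eqref{eq2.10}. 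For \eqref{eq2.11} one does the same decomposition but keeps the logarithm: on $E_1$, $\Phi(x-y)=A(-1)^{n/2}|x-y|^0\log\frac5{|x-y|}=A(-1)^{n/2}\log\frac5{|x-y|}$ (since $2m-n=0$), and the partial-sum correction at the level $|\beta|=0$ produces the constant $A(-1)^{n/2}\log\frac5{|x|}$, so the difference is $A(-1)^{n/2}\log\frac{|x|}{|x-y|}$, which after absorbing the sign into $A$ gives the stated formula.

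The $C^{2m-1}$-regularity assertion follows once these differentiation-under-the-integral steps are legitimate: each $D^\beta N$ with $|\beta|\le 2m-1$ is exhibited as a locally bounded, indeed continuous, function of $x$ on ${\bb R}^n\backslash\{0\}$ — continuity coming from dominated convergence on $E_2$ and, for the main term on $E_1$, from the fact that $D^\beta\Phi(x-y)$ is a weakly singular kernel of order $|\beta|+(n-2m)\le 2m-1+(n-2m)=n-1<n$, so the integral $\int_{E_1}|D^\beta\Phi(x-y)|\,f(y)\,dy$ is finite and continuous in $x$. The reason we need $|\beta|<2m$ throughout, and must exclude $|\beta|=0$ in the case $2m=n$, is that the kernel $D^\beta\Phi$ has homogeneity $2m-n-|\beta|$, which is $\ge -n$ exactly when $|\beta|\le 2m$, with equality (the borderline, non-integrable case) at $|\beta|=2m$; and when $2m=n$ the kernel $\Phi$ itself is only logarithmically singular, so the ``$O(|x|^{2-n-|\beta|})$'' bookkeeping that works for $|\beta|\ge 1$ must be replaced by the separate logarithmic formula \eqref{eq2.11} when $|\beta|=0$.

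The main obstacle I anticipate is the careful treatment of the region $E_2$: one must verify that the Taylor remainder $\Psi(x,y)$ and its $x$-derivatives of order $\le 2m-1$ genuinely satisfy $|D_x^\beta\Psi(x,y)|\le C(x)\,|y|^{2m-2}$ (or a comparable bound integrable against $f$ on $E_2$ in view of \eqref{eq2.8}), with the $x$-dependence of the constant being exactly $|x|^{2-n-|\beta|}$. This requires expanding $\Psi$ via the integral form of the Taylor remainder,
\[
\Psi(x,y)=\sum_{|\alpha|=2m-2}\frac{(-y)^\alpha}{\alpha!}\int_0^1 (1-t)\,(D^\alpha\Phi)(x-ty)\,(2m-2)\,\frac{?}{?}\,dt
\]
— more precisely the standard $2m-2$ order remainder — and then splitting $E_2$ further according to whether $|y|\lesssim|x|$ (where $|x-ty|\sim|x|$ and one reads off the homogeneity of $D^\alpha\Phi$ directly) or $|y|\gtrsim|x|$ (where one instead uses $|x-y|\sim|y|$ and crude bounds on $\Phi(x-y)$ together with $|D^\alpha\Phi(x)|\lesssim|x|^{2-n}$ term-by-term). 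Handling the weight $|y|^{2m-2}$ uniformly in this outer sub-region, so that the whole $E_2$-contribution is controlled by $\int_{|y|<1}|y|^{2m-2}f(y)\,dy<\infty$ times $|x|^{2-n-|\beta|}$, is the delicate book-keeping at the heart of the proof; the rest is routine.
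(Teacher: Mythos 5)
Your estimates are exactly the ones the paper uses: the Taylor-remainder bound $|D^\beta_x\Psi(x,y)|\le C|y|^{2m-2}|x|^{2-n-|\beta|}$ for $|y|<|x|/2$ (this is \eqref{eq2.12} in the paper, proved by applying Taylor's theorem to $D^\beta\Phi$, so that only derivatives of order $2m-2+|\beta|\ge 2$ appear and the logarithm never enters), term-by-term homogeneity estimates of the partial sum and of $(D^\beta\Phi)(x-y)$ on the region $|y|>|x|/2$, $|y-x|>|x|/2$, the weight $|y|^{2m-2}$ paired with \eqref{eq2.8} to produce the $O(|x|^{2-n-|\beta|})$ error, and a separate logarithmic treatment when $2m=n$, $\beta=0$. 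So the analytic core of your sketch coincides with the paper's proof.

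The genuine flaw is in how you propose to justify differentiation. You split $\{|y|<1\}$ into the $x$-dependent sets $E_1(x)=\{|y-x|<|x|/2\}$ and $E_2(x)$ \emph{first} and then ``differentiate under the integral sign'' on each piece, citing dominated convergence on $E_2$. Dominated convergence does not cover this: the domains move with $x$, so each piece picks up boundary contributions on $\{|y-x|=|x|/2\}$ which are not even well defined since $f$ is merely measurable (they cancel in the sum, but you never argue this, and for $|\beta|$ up to $2m-1$ the bookkeeping of repeated moving-boundary differentiations is not routine). The paper avoids the issue entirely by splitting $x$-independently: $N=N_1+N_2$ with $N_1$ the integral over $|y|<\vp$ (handled for $|x|>2\vp$ by \eqref{eq2.8} together with \eqref{eq2.12}) and $N_2$ over $\vp<|y|<1$ (handled by local boundedness of $f$ there), which yields $N\in C^{2m-1}({\bb R}^n\backslash\{0\})$ and the fixed-domain formula $D^\beta N(x)=\intl_{|y|<1}-D^\beta_x\Psi(x,y)f(y)\,dy$; only \emph{then} is the integrand split according to $|y|\lessgtr|x|/2$ and $|y-x|\lessgtr|x|/2$, purely at the level of pointwise estimates. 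Your argument should be reorganized this way. Two smaller points: in \eqref{eq2.11} you cannot ``absorb the sign into $A$'' (A is a fixed positive constant and the positivity of the kernel $\log\frac{|x|}{|x-y|}$ is used later); the correct sign comes from the minus in $-\Psi$ together with $(-1)^{n/2}=(-1)^m=-1$ because $m$ is odd. Also, the terms $1\le|\alpha|\le 2m-3$ of the partial sum in the case $2m=n$, $\beta=0$ must themselves be estimated (using $|y|>|x|/2$) and pushed into the $O(|x|^{2-n})$ error; only the $\alpha=0$ term combines with $-\Phi(x-y)$ to form the logarithmic ratio.
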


\begin{proof}
 Differentiating \eqref{eq2.4} with respect to $x$ we get
\[
 D^\beta_x\Psi(x,y) = (D^\beta\Phi)(x-y) - \sum_{|\alpha|\le 2m-3} \frac{(-y)^\alpha}{\alpha!} (D^{\alpha+\beta} \Phi)(x) \quad \text{for}\quad x\ne 0 \quad \text{and}\quad y\ne x
\]
and so by Taylor's theorem applied to $D^\beta\Phi$ we have
\begin{equation}\label{eq2.12}
 |D^\beta_x\Psi(x,y)| \le C|y|^{2m-2} |x|^{2-n-|\beta|}\quad \text{for}\quad |y|<\frac{|x|}2
\end{equation}
where in this proof $C=C(m,n,\beta)$ is a positive constant whose value may change from line to line.

Let $\vp\in(0,1)$ be fixed. Then $N=N_1+N_2$ in ${\bb R}^n\backslash\{0\}$ where
\[
 N_1(x) = \intl_{|y|<\vp} - \Psi(x,y) f(y)\,dy\quad \text{and}\quad N_2(x) = \intl_{\vp<|y|<1} - \Psi(x,y) f(y)\,dy.
\]
It follows from \eqref{eq2.8} and \eqref{eq2.12} that $N_1\in C^\infty({\bb R}^n\backslash \ovl{B_{2\vp}(0)})$ and
\[
 (D^\beta N_1)(x)  = \intl_{|y|<\vp} - D^\beta\Psi(x,y) f(y)\,dy\quad \text{for}\quad |x|>2\vp.
\]
Also, by the boundedness of $f$ in $B_1(0)\backslash B_\vp(0)$, $N_2\in C^{2m-1}({\bb R}^n\backslash \ovl{B_{2\vp}(0)})$ and for $|\beta|<2m$ we have
\[
 (D^\beta N_2)(x) = \intl_{\vp<|y|<1} - D^\beta\Psi(x,y) f(y)\,dy \quad \text{for}\quad |x|>2\vp.
\]
Thus since $\vp\in (0,1)$ was arbitrary, we have $N\in C^{2m-1}({\bb R}^n\backslash \{0\})$ and for $|\beta|<2m$ we have
\begin{equation}\label{eq2.13}
 (D^\beta N)(x) = \intl_{|y|<1} - D^\beta_x \Psi(x,y) f(y)\,dy\quad \text{for}\quad x\ne 0.
\end{equation}

\n {\bf Case 1.} Suppose $|\beta|<2m$ and either $2m=n$ and $|\beta|\ne 0$ or $2m<n$. Then for $0<|x|/2 < |y|$ we have
\[
 \left|\sum_{|\alpha|\le 2m-3} \frac{(-y)^\alpha}{\alpha!} D^{\alpha+\beta} \Phi(x)\right| \le C \sum_{|\alpha| \le 2m-3} |y|^{|\alpha|} |x|^{2m-n-|\alpha|-|\beta|} \le C|y|^{2m-2} |x|^{2-n-|\beta|}
\]
and for $0<|x|/2<|y|$ and $|y-x| > |x|/2$ we have
\[
 |(D^\beta\Phi)(x-y)| \le C|x-y|^{2m-n-|\beta|} \le C|x|^{2m-n-|\beta|} \le C|y|^{2m-2} |x|^{2-n-|\beta|}.
\]
Thus \eqref{eq2.8}, \eqref{eq2.12} and \eqref{eq2.13} imply \eqref{eq2.10}.
\medskip 

\n {\bf Case 2.} Suppose $2m=n$. Then for $0<|x|/2<|y|$ we have 
\[
 \left|\sum_{1\le |\alpha|\le 2m-3} \frac{(-y)^\alpha}{\alpha!} D^\alpha\Phi(x)\right| \le C \sum_{1\le |\alpha|\le 2m-3} |y|^{|\alpha|} |x|^{2m-n-|\alpha|} \le C |y|^{2m-2} |x|^{2-n}
\]
and if $0 < |x|/2<|y|$ and $|y-x|>|x|/2$ then using the fact that $|\log z|\le \log 4z$ for $z\ge 1/2$ we have
\begin{align*}
 |-\Phi(x-y)+\Phi(x)| &= A\left|\log \frac{|x-y|}{|x|}\right|\le A \log 4 \frac{|x-y|}{|x|}\\
&\le A \frac{|y|^{n-2}}{|x|^{n-2}} \left(\frac{|x|}{|y|}\right)^{n-2} \log 4\left(1 + \frac{|y|}{|x|}\right)\\
&\le A \frac{|y|^{n-2}}{|x|^{n-2}} \max_{r\ge 1/2} r^{2-n} \log 4(1+r).
\end{align*}
Thus \eqref{eq2.11} follows from \eqref{eq2.8}, \eqref{eq2.9}, and \eqref{eq2.12}.
\end{proof}

\begin{lem}\label{lem2.3}
Suppose $u(x)$ is a $C^{2m}$ nonnegative solution of \eqref{eq2.5},
where $m\ge 2$ and $n\ge 2$ are integers, $m$ is odd, and $2m\le
n$. Let $\{x_j\}^\infty_{j=1} \subset {\bb R}^n$ and
$\{r_j\}^\infty_{j=1} \subset {\bb R}$ be sequences such that
\begin{equation}\label{eq2.14}
 0 < 4|x_{j+1}| \le |x_j| \le 1/2 \quad \text{and}\quad 0 < r_j \le |x_j|/4.
\end{equation}
Define $f_j\colon B_2(0)\to [0,\infty)$ by
\begin{equation}\label{eq2.15}
 f_j(\eta) = |x_j|^{2m-2} r^n_jf(y)\quad \text{where}\quad y=x_j+r_j\eta
\quad\text{and} \quad f=-\Delta^mu. 
\end{equation}
Then
\begin{equation}\label{eq2.16}
 \intl_{|\eta|<2} f_j(\eta)\,d\eta \to 0\quad \text{as}\quad j\to \infty
\end{equation}
and when $|\beta|<2m$ and either $2m=n$ and $|\beta|\ne 0$ or
$2m<n$ we have for $|\xi|<1$ that
\begin{equation}\label{eq2.17}
 \left(\frac{r_j}{|x_j|}\right)^{n-2m+|\beta|} |x_j|^{n-2+|\beta|}
|(D^\beta u)(x_j+r_j\xi)| 
\le C\left(\frac{r_j}{|x_j|}\right)^{n-2m+|\beta|} 
+ \vp_j + \intl_{|\eta|<2} \frac{Af_j(\eta)\,d\eta}{|\xi-\eta|^{n-2m+|\beta|}}
\end{equation}
and when $2m=n$ we have for $|\xi|<1$ that
\begin{equation}\label{eq2.18}
 \frac{|x_j|^{n-2}}{\log \frac{|x_j|}{r_j}} u(x_j+r_j\xi) \le \frac{C}{\log \frac{|x_j|}{r_j}} +\vp_j + \frac1{\log \frac{|x_j|}{r_j}} \intl_{|\eta|<2} A\left(\log \frac5{|\xi-\eta|}\right) f_j(\eta)\,d\eta
\end{equation}
where in \eqref{eq2.17} and \eqref{eq2.18} the constant $A$ depends
only on $m$ and $n$,
the constant $C$ is independent of $\xi$ and $j$, the constants $\vp_j$ are independent of $\xi$, and $\vp_j\to 0$ as $j\to \infty$.
\end{lem}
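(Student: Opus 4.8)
The plan is to combine the representation formula of Lemma~\ref{lem2.1} with the Riesz‑potential‑type estimates of Lemma~\ref{lem2.2}, and then rescale. Writing $f=-\Delta^mu\ge 0$, Lemma~\ref{lem2.1} gives $\intl_{|y|<1}|y|^{2m-2}f(y)\,dy<\infty$ and
\[
u=N+h+\sum_{|\alpha|\le 2m-2}a_\alpha D^\alpha\Phi\quad\text{in }B_1(0)\backslash\{0\};
\]
since $u\in C^{2m}$, $f$ is locally bounded and measurable in $\ovl{B_1(0)}\backslash\{0\}$, so Lemma~\ref{lem2.2} applies to $N$. Differentiating this identity and using $|D^\gamma\Phi(z)|\le A|z|^{2m-n-|\gamma|}$ (with $A$ depending only on $m,n$ after maximizing over the finitely many multi‑indices involved), $f\ge 0$, $h\in C^\infty(B_1(0))$, and $|D^{\alpha+\beta}\Phi(x)|\le C|x|^{2m-n-|\alpha|-|\beta|}\le C|x|^{2-n-|\beta|}$ for $0<|x|<1$ and $|\alpha|\le 2m-2$, I would first obtain, for $|\beta|<2m$ with the stated parity restriction and all small $x\ne 0$,
\[
|(D^\beta u)(x)|\le A\intl_{\underset{\sst |y|<1}{|y-x|<|x|/2}}|x-y|^{2m-n-|\beta|}f(y)\,dy+C|x|^{2-n-|\beta|},
\]
together with the analogous bound when $2m=n$ and $\beta=0$, in which $|x-y|^{2m-n-|\beta|}$ is replaced by $A\log\frac{|x|}{|x-y|}$ and the error by $C|x|^{2-n}$; these come from \eqref{eq2.10} and \eqref{eq2.11}, using also that $t^{2m-2}\log\frac1t$ is bounded near $t=0$ to absorb the $|\alpha|=0$ summand.

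The next ingredient is an annulus‑summing observation. Since $\intl_{|y|<1}|y|^{2m-2}f(y)\,dy<\infty$ and $|x_{j+1}|\le|x_j|/4$ by \eqref{eq2.14}, the annuli $\{\frac18|x_j|<|y|<2|x_j|\}$, $j\ge 1$, lie in $B_1(0)\backslash\{0\}$ and no point lies in more than three of them, so $\sum_j\intl_{\frac18|x_j|<|y|<2|x_j|}|y|^{2m-2}f(y)\,dy\le 3\intl_{|y|<1}|y|^{2m-2}f(y)\,dy<\infty$; hence $\vp'_j:=8^{2m-2}\intl_{\frac18|x_j|<|y|<2|x_j|}|y|^{2m-2}f(y)\,dy\to 0$. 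Substituting $y=x_j+r_j\eta$ and using $|x_j|\le 2|y|$ on these annuli yields both \eqref{eq2.16} and, more generally, $\intl_E f_j(\eta)\,d\eta\le\vp'_j$ whenever $E\subset\{\eta:\frac18|x_j|<|x_j+r_j\eta|<2|x_j|\}$ is measurable; the $\vp'_j$ do not depend on $\xi$.

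Finally I would rescale. Fix $|\xi|<1$ and set $x=x_j+r_j\xi$; by \eqref{eq2.14}, $\frac34|x_j|\le|x|\le\frac54|x_j|$, so $x\in B_1(0)\backslash\{0\}$. In the pointwise bound above substitute $y=x_j+r_j\eta$, using $f(y)=f_j(\eta)/(|x_j|^{2m-2}r_j^n)$, $|x-y|=r_j|\xi-\eta|$, $dy=r_j^n\,d\eta$; a bookkeeping of exponents shows that multiplying through by $\left(\frac{r_j}{|x_j|}\right)^{n-2m+|\beta|}|x_j|^{n-2+|\beta|}$ turns the convolution into exactly $A\intl_{D_j}|\xi-\eta|^{2m-n-|\beta|}f_j(\eta)\,d\eta$ and the $O(|x|^{2-n-|\beta|})$ error into $O\big((r_j/|x_j|)^{n-2m+|\beta|}\big)$, where $D_j$ denotes the image of the domain of integration, which by \eqref{eq2.14} satisfies $\frac18|x_j|<|x_j+r_j\eta|<2|x_j|$ on $D_j$. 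Splitting $D_j=(D_j\cap\{|\eta|<2\})\cup(D_j\cap\{|\eta|\ge2\})$: on the first piece one enlarges the domain of the nonnegative integrand to the fixed ball $\{|\eta|<2\}$, producing the convolution term on the right of \eqref{eq2.17}; on the second piece $|\xi-\eta|\ge|\eta|/2\ge1$, so $|\xi-\eta|^{2m-n-|\beta|}\le1$ and, by the preceding paragraph, that contribution is at most $A\vp'_j$, which is put into $\vp_j$. This proves \eqref{eq2.17}. The same scaling proves \eqref{eq2.18}, now writing $\log\frac{|x|}{|x-y|}=\log\frac5{|\xi-\eta|}+\log\frac{|x_j|}{r_j}+O(1)$, where the $O(1)$ equals $\log\frac{|x|}{5|x_j|}$ and is bounded; after dividing by $\log\frac{|x_j|}{r_j}\ge\log4$, the coefficients of $\intl_{D_j}f_j(\eta)\,d\eta$ are $O(1)$ and so contribute $O(\vp'_j)$, while on $\{|\eta|\ge2\}$ one uses $\log\frac5{|\xi-\eta|}\le\log5$.

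The main obstacle is this last step. The rescaled convolution naturally lives over the $\xi$‑dependent domain $D_j$, which is \emph{not} contained in the fixed ball $\{|\eta|<2\}$ when $r_j\ll|x_j|$, so one must strip off the tail $D_j\cap\{|\eta|\ge2\}$ and show its contribution is $o(1)$ uniformly in $\xi$; this is exactly where the bounded‑overlap‑of‑annuli estimate that gives \eqref{eq2.16} is reused.
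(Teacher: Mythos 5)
Your proposal is correct and follows essentially the same route as the paper: representation via Lemma~\ref{lem2.1}, the kernel estimates \eqref{eq2.10}--\eqref{eq2.11} of Lemma~\ref{lem2.2}, and then a near/far splitting in which the near region $\{|y-x_j|<2r_j\}$ rescales to $\{|\eta|<2\}$ and the far contribution is absorbed into $\vp_j$ using the finiteness of $\intl_{|y|<1}|y|^{2m-2}f(y)\,dy$. The only cosmetic differences are that you rescale before splitting (the paper splits at the level of $x$, as in \eqref{eq2.20}--\eqref{eq2.21}, and then rescales) and that you justify $\vp_j\to 0$ by a bounded-overlap-of-annuli argument rather than by the shrinking-domain observation \eqref{eq2.22}; both are equivalent.
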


\begin{proof}
By Lemma~\ref{lem2.1}, $f$ satisfies \eqref{eq2.8} and
for $|\beta|<2m$ we have
\begin{equation}\label{eq2.19}
(D^\beta u)(x) = (D^\beta N)(x) + O(|x|^{2-n-|\beta|})\quad \text{for}\quad 0<|x|\le 3/4
\end{equation}
where $N$ is given by \eqref{eq2.9}.

If
\[
|y-x| < |x|/2,\quad |y-x_j| > 2r_j,\quad \text{and}\quad |x-x_j|<r_j
\]
then
\[
 |x-y| > r_j\quad \text{and}\quad 2|y|>|x| > |x_j| - r_j > |x_j|/2
\]
and thus when $|\beta|<2m$ and either $2m=n$ and $|\beta|\ne 0$ or $2m<n$ we have
\[
 |(D^\beta \Phi)(x-y)|\le \frac{A}{|x-y|^{n-2m+|\beta|}} \le \frac{A}{r^{n-2m+|\beta|}_j} \le \frac{A|y|^{2m-2}}{r^{n-2m+|\beta|}_j|x_j|^{2m-2}}
\]
and when $2m=n$ we have
\[
 \log \frac{|x|}{|x-y|} \le \log \frac{\frac54 |x_j|}{r_j} \le 2\cdot 4^{n-2} \frac{|y|^{n-2}}{|x_j|^{n-2}} \log \frac{|x_j|}{r_j}.
\]
Thus by \eqref{eq2.8} and Lemma~\ref{lem2.2}, when $|\beta|<2m$ and
either $2m=n$ and $|\beta|\ne 0$ or $2m<n$ we have
\begin{align}
 |(D^\beta N)(x)| &\le \intl_{|y-x_j|<2r_j} \frac{Af(y)\,dy}{|x-y|^{n-2m+|\beta|}} + A\frac{\intl_{|y-x|< |x|/2} |y|^{2m-2} f(y)\,dy}{r^{n-2m+|\beta|}_j |x_j|^{2m-2}} + \frac{C}{|x_j|^{n-2+|\beta|}}\notag\\
\label{eq2.20}
&\le \intl_{|y-x_j| <2r_j} \frac{Af(y)\,dy}{|x-y|^{n-2m+|\beta|}}  + \frac{\vp_j}{r^{n-2m+|\beta|}_j|x_j|^{2m-2}} + \frac{C}{|x_j|^{n-2+|\beta|}} \quad \text{for}\quad |x-x_j|<r_j
\end{align}
and when $2m=n$ we have
\begin{align}
 N(x) &\le A \intl_{|y-x_j|<2r_j} \left(\log \frac{|x|}{|x-y|}\right) f(y)\,dy + 2A4^{n-2} \left(\, \intl_{|y-x|<|x|/2} |y|^{n-2}f(y)\,dy\right) \frac{\log\frac{|x_j|}{r_j}}{|x_j|^{n-2}} + \frac{C}{|x_j|^{n-2}}\notag\\
\label{eq2.21}
&\le A \intl_{|y-x_j|<2r_j} \left(\log \frac{|x|}{|x-y|}\right) f(y)\,dy + \vp_j \frac{\log \frac{|x_j|}{r_j}}{|x_j|^{n-2}} + \frac{C}{|x_j|^{n-2}}\quad \text{for}\quad |x-x_j|<r_j
\end{align}
where in \eqref{eq2.20} and \eqref{eq2.21} the constant $A$ depends
only on $m$ and $n$,
the constant $C$ is independent of $x$ and $j$, the constants $\vp_j$ are independent of $x$, and $\vp_j\to 0$ as $j\to\infty$.

For $|\eta| < 2$ and $y$ given by \eqref{eq2.15} we have $|x_j| < 2|y|$. Thus
\begin{align}
 \intl_{|\eta|<2} f_j(\eta)\,d\eta &= \intl_{|y-x_j|<2r_j} |x_j|^{2m-2} f(y)\,dy\notag\\
\label{eq2.22}
&\le 2^{2m-2} \intl_{|y-x_j|<|x_j|/2} |y|^{2m-2} f(y)\,dy \to 0\quad \text{as}\quad j\to\infty
\end{align}
because $f$ satisfies \eqref{eq2.8}.

If $|\beta|<2m$ and either $2m=n$ and $|\beta|\ne 0$ or $2m<n$ 
then by \eqref{eq2.20} and \eqref{eq2.15} we have for
$|\xi|<1$ that 
\begin{align}
\left(\frac{r_j}{|x_j|}\right)^{n-2m+|\beta|} &|x_j|^{n-2+|\beta|}
|(D^\beta N)(x_j+r_j\xi)|\notag\\ &\le C\left(\frac{r_j}{|x_j|}\right)^{n-2m+|\beta|}
+ \vp_j + r^{n-2m+|\beta|}_j |x_j|^{2m-2} \intl_{|\eta|<2} \frac{Af(y)r^n_j\,d\eta}{r^{n-2m+|\beta|}_j|\xi-\eta|^{n-2m+|\beta|}}\notag\\
\label{eq2.23}
&= C\left(\frac{r_j}{|x_j|}\right)^{n-2m+|\beta|} 
+ \vp_j + \intl_{|\eta|<2} \frac{Af_j(\eta)\,d\eta}{|\xi-\eta|^{n-2m+|\beta|}}.
\end{align}

If $2m=n$ and $|\xi|<1$ then by \eqref{eq2.21}, \eqref{eq2.15}, and \eqref{eq2.22} we have
\begin{align}
 \frac{|x_j|^{n-2}}{\log \frac{|x_j|}{r_j}} N(x_j+r_j\xi) &\le \frac{C}{\log \frac{|x_j|}{r_j}} + \vp_j + \frac{|x_j|^{n-2}}{\log \frac{|x_j|}{r_j}} A \intl_{|\eta|<2} \left(\log \frac{5|x_j|}{r_j|\xi-\eta|}\right) |x_j|^{2-n} f_j(\eta)\,d\eta\notag\\
\label{eq2.24}
&\le \frac{C}{\log \frac{|x_j|}{r_j}} + \vp_j + \frac1{\log \frac{|x_j|}{r_j}} \intl_{|\eta|<2} A \left(\log \frac5{|\xi-\eta|}\right) f_j(\eta)\,d\eta.
\end{align}

Inequalities \eqref{eq2.17} and \eqref{eq2.18} now follow from \eqref{eq2.23}, \eqref{eq2.24}, and \eqref{eq2.19}.
\end{proof}

\begin{lem}\label{lem2.4}
Suppose $m\ge 2$ and $n\ge 2$ are integers, $m$ is odd, and $2m\le n$. Let $\psi\colon (0,1)\to (0,1)$ be a continuous function such that $\lim\limits_{r\to 0^+} \psi(r)=0$. Let $\{x_j\}^\infty_{j=1} \subset {\bb R}^n$ be a sequence such that
\begin{equation}\label{eq2.25}
 0 < 4|x_{j+1}| \le |x_j|\le 1/2
\end{equation}
and
\begin{equation}\label{eq2.26}
 \sum^\infty_{j=1} \vp_j<\infty \quad \text{where}\quad \vp_j = \psi(|x_j|). 
\end{equation}
Let $\{r_j\}^\infty_{j=1}\subset {\bb R}$ be a sequence satisfying
\begin{equation}\label{eq2.27}
 0 < r_j \le |x_j|/5.
\end{equation}
Then there exists a positive function $u\in C^\infty({\bb
  R}^n\backslash\{0\})$ and a positive constant $A=A(m,n)$ such that
\begin{alignat}{2}\label{eq2.28}
0 \le &-\Delta^m u \le \frac{\vp_j}{|x_j|^{2m-2}r^n_j} &\quad &\text{in}\quad B_{r_j}(x_j),\\
\label{eq2.29}
&-\Delta^m u(x) =  0 &\quad &\text{in}\quad {\bb R}^n \Big\backslash\left(\{0\} \cup \bigcup^\infty_{j=1} B_{r_j}(x_j)\right),
\end{alignat}
and
\begin{equation}\label{eq2.30}
 u \ge \begin{cases}
\frac{A\vp_j}{|x_j|^{2m-2}r^{n-2m}_j} &\text{in $B_{r_j}(x_j)$ if $2m<n$}\\
\noalign{\medskip}
\frac{A\vp_j}{|x_j|^{n-2}} \log \frac{|x_j|}{r_j}&\text{in
  $B_{r_j}(x_j)$ if $2m=n$.}
       \end{cases}
\end{equation}
\end{lem}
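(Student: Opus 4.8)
The plan is to construct $u$ in the form $u=N+C\,\Gamma$, where $\Gamma(x)=|x|^{-(n-2)}$, the constant $C=C(m,n,\psi,\{x_j\})>0$ is fixed large at the end, and $N$ is the polyharmonic potential — furnished by Lemma \ref{lem2.2} — of a bump function built from the data. Fix once and for all $\chi\in C_c^\infty(B_1(0))$ with $0\le\chi\le1$ and $\chi\equiv1$ on $B_{1/2}(0)$, and put
\[
g_j(y)=\frac{\vp_j}{|x_j|^{2m-2}r_j^n}\,\chi\!\Big(\frac{y-x_j}{r_j}\Big),\qquad g=\sum_{j=1}^\infty g_j .
\]
Conditions \eqref{eq2.25} and \eqref{eq2.27} force the balls $B_{r_j}(x_j)$ to be pairwise disjoint, to lie in $B_{3/5}(0)\backslash\{0\}$, and to accumulate only at the origin; consequently $g\in C^\infty({\bb R}^n\backslash\{0\})$ is nonnegative, locally bounded on $\overline{B_1(0)}\backslash\{0\}$, vanishes outside $\bigcup_j B_{r_j}(x_j)$, and satisfies $\int_{|y|<1}|y|^{2m-2}g\,dy\le C_1(m,n)\sum_j\vp_j<\infty$ by \eqref{eq2.26}. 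Lemma \ref{lem2.2} applied to $f=g$ then yields $N\in C^{2m-1}({\bb R}^n\backslash\{0\})$ satisfying \eqref{eq2.10} with $\beta=0$ (when $2m<n$) or \eqref{eq2.11} (when $2m=n$); a short local argument — decompose $g$ near a fixed $x_0\ne0$ into its finitely many bumps with $|y|>|x_0|/4$ and the remainder, use $\Delta_x^m\Psi(x,y)=0$ for $x\ne0,y$, and that $-\Phi$ convolved with a $C_c^\infty$ function is a classical solution — shows $-\Delta^m N=g$ in ${\bb R}^n\backslash\{0\}$, whence $N\in C^\infty({\bb R}^n\backslash\{0\})$ by elliptic regularity. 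Since $2m\le n$ (so $n\ge6$), $\Gamma(x)=|x|^{-(n-2)}$ is harmonic, hence $m$-harmonic, in ${\bb R}^n\backslash\{0\}$; therefore $u=N+C\Gamma\in C^\infty({\bb R}^n\backslash\{0\})$ satisfies $-\Delta^m u=g$ there, and \eqref{eq2.28}, \eqref{eq2.29} hold at once because $0\le g_j\le\vp_j/(|x_j|^{2m-2}r_j^n)$ and $g$ vanishes off $\bigcup_j B_{r_j}(x_j)$.

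The reason for routing through Lemma \ref{lem2.2} is that it exhibits $N$ as a manifestly nonnegative integral plus a controlled error: the leading term in \eqref{eq2.10} is $\int_{|y-x|<|x|/2}A|x-y|^{2m-n}g(y)\,dy\ge0$, and that in \eqref{eq2.11} is $A\int_{|y-x|<|x|/2}\bigl(\log\tfrac{|x|}{|x-y|}\bigr)g(y)\,dy\ge0$ since $\log\tfrac{|x|}{|x-y|}>\log2$ on that set; moreover, inspecting the proof of Lemma \ref{lem2.2}, the $O(|x|^{2-n})$ remainder is bounded by $C_2(m,n)\,|x|^{2-n}\int_{|y|<1}|y|^{2m-2}g\,dy\le C_3 S\,\Gamma(x)$ for every $x\ne0$, where $S:=\sum_j\vp_j$ and $C_3=C_3(m,n)$. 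Hence $N\ge-C_3S\,\Gamma$ on ${\bb R}^n\backslash\{0\}$, so choosing $C>C_3S$ makes $u\ge(C-C_3S)\Gamma>0$, which gives positivity.

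For \eqref{eq2.30} I estimate the leading term of $N$ from below using only the single bump $g_j$: for $x\in B_{r_j}(x_j)$ and $y\in B_{r_j/2}(x_j)$ one has $|x-y|<\tfrac32 r_j<|x|/2$ and $|x|\ge\tfrac45|x_j|$, so when $2m<n$,
\[
\int_{|y-x|<|x|/2}A|x-y|^{2m-n}g\,dy\ \ge\ A\Bigl(\tfrac32 r_j\Bigr)^{2m-n}\!\!\int_{B_{r_j/2}(x_j)}\!\!g_j\,dy\ =\ A_0(m,n)\,\frac{\vp_j}{|x_j|^{2m-2}r_j^{n-2m}},
\]
while when $2m=n$, using $\log\tfrac{|x|}{|x-y|}\ge c(m,n)\log\tfrac{|x_j|}{r_j}$ on that range (valid because $|x_j|/r_j\ge5$), the leading term is $\ge A_0(m,n)\,\tfrac{\vp_j}{|x_j|^{n-2}}\log\tfrac{|x_j|}{r_j}$. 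On $B_{r_j}(x_j)$ the remainder of $N$ and the added $C\Gamma$ are both multiples of $|x|^{2-n}$ evaluated at the same point, so they combine to $(C-C_3S)|x|^{2-n}\ge0$, and we conclude that $u$ satisfies \eqref{eq2.30} with $A=A_0(m,n)$.

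The step I expect to be the crux is exactly this positivity/lower-bound argument. In the cases (iv), (v) at hand the fundamental solution $\Phi$ is \emph{not} bounded below near the origin, so one cannot simply take $u=-\Phi\ast g$: that convolution need not be finite (at a fixed $x\ne0$ it behaves like $|x|^{2m-n}\sum_j\vp_j/|x_j|^{2m-2}$, which can diverge), and a regularization of it that does converge — the $N$ above — may itself be negative, of the critical size $|x|^{2-n}$. What rescues the construction is the combination of the two facts above: Lemma \ref{lem2.2} localizes $N$ so that its positive part is precisely the quantity demanded in \eqref{eq2.30}, while its error is of order $|x|^{2-n}$ with constant governed by the summable quantity $\sum_j\psi(|x_j|)$, and $\Gamma(x)=|x|^{-(n-2)}$ is a positive $m$-harmonic function of exactly that order, so one data-dependent constant $C$ absorbs the error everywhere while leaving the universal-size main term intact.
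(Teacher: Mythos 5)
Your proposal is correct and follows essentially the same route as the paper: both build $-\Delta^m u$ as a sum of bumps of height $\vp_j/(|x_j|^{2m-2}r_j^n)$ supported in the balls $B_{r_j}(x_j)$, define $N$ via the potential of Lemma \ref{lem2.2}, read off the lower bound \eqref{eq2.30} from the nonnegative leading term in \eqref{eq2.10}--\eqref{eq2.11}, and add a large multiple of $|x|^{-(n-2)}$ to absorb the $O(|x|^{2-n})$ error and force positivity. Your extra details (explicit dependence of the additive constant on $\sum_j\vp_j$, the restriction to the half-ball bump, and the verification that $-\Delta^m N=g$ away from the origin) are fine but do not change the argument.
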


\begin{proof}
Let $\varphi\colon {\bb R}^n\to [0,1]$ be a $C^\infty$ function whose support is $\ovl{B_1(0)}$. Define $\varphi_j\colon {\bb R}^n\to [0,1]$ by 
\[
 \varphi_j(y) = \varphi(\eta)\quad \text{where}\quad y=x_j+r_j\eta.
\]
Then
\begin{equation}\label{eq2.32}
 \intl_{{\bb R}^n} \varphi_j(y)\,dy = \intl_{{\bb R}^n} \varphi(\eta) r^n_j\,d\eta = r^n_jI
\end{equation}
where $I = \intl_{{\bb R}^n} \varphi(\eta)\,d\eta >0$. Let
\begin{equation}\label{eq2.33}
 f = \sum^\infty_{j=1} M_j\varphi_j \quad \text{where}\quad M_j = \frac{\vp_j}{|x_j|^{2m-2}r^n_j}.
\end{equation}
Since the functions $\varphi_j$ have disjoint supports, $f\in C^\infty({\bb R}^n\backslash\{0\})$ and by \eqref{eq2.27}, \eqref{eq2.32}, \eqref{eq2.33}, and \eqref{eq2.26} we have
\begin{align}
 \intl_{{\bb R}^n} |y|^{2m-2} f(y)\, dy &= \sum^\infty_{j=1} M_j \intl_{|y-x_j|<r_j} |y|^{2m-2} \varphi_j(y)\,dy\notag\\
&\le 2^{2m-2} I \sum^\infty_{j=1} M_j|x_j|^{2m-2} r^n_j\notag\\
\label{eq2.34}
&= 2^{2m-2} I \sum^\infty_{j=1} \vp_j<\infty.
\end{align}

Using the fact that
\begin{equation}\label{eq2.35}
 |x-x_j| < r_j \le |x_j|/5\quad \text{implies}\quad B_{r_j}(x_j) \subset B_{\frac{|x|}2} (x),
\end{equation}
we have for $2m<n$, $x=x_j+r_j\xi$, and $|\xi|<1$ that 
\begin{align*}
 \intl_{|y-x|<|x|/2} \frac1{|x-y|^{n-2m}} f(y)\,dy &\ge \intl_{|y-x_j|<r_j} \frac1{|x-y|^{n-2m}} M_j\varphi_j(y)\,dy\\
&= \intl_{|\eta|<1} \frac1{r^{n-2m}_j} \frac{M_j}{|\xi-\eta|^{n-2m}} \varphi(\eta) r^n_j\, d\eta\\
&= \frac{\vp_j}{|x_j|^{2m-2}r^{n-2m}_j} \intl_{|\eta|<1} \frac{\varphi(\eta)}{|\xi-\eta|^{n-2m}} d\eta\\
&\ge \frac{J\vp_j}{|x_j|^{2m-2}r^{n-2m}_j} \quad \text{where}\quad J = \min_{|\xi|\le 1} \intl_{|\eta|<1} \frac{\varphi(\eta)\,d\eta}{|\xi-\eta|^{n-2m}}.
\end{align*}
Similarly, using \eqref{eq2.35} we have for $2m=n$, $x=x_j+r_j\xi$, and $|\xi|<1$ that
\begin{align*}
 \intl_{|y-x|<|x|/2} \left(\log \frac{|x|}{|x-y|}\right) f(y)\,dy &\ge \intl_{|y-x_j|<r_j} \left(\log \frac{|x|}{|x-y|}\right) M_j \varphi_j(y)\,dy\\
&\ge \intl_{|\eta|<1} \left(\log \frac{\frac45 |x_j|}{r_j|\xi-\eta|}\right) M_j\varphi(\eta) r^n_j\, d\eta\\
&= \frac{\vp_j}{|x_j|^{n-2}} \intl_{|\eta|<1} \left(\log \frac2{|\xi-\eta|} + \log \frac{|x_j|}{r_j} - \log \frac52\right) \varphi(\eta)\,d\eta\\
&\ge \frac{I\vp_j}{|x_j|^{n-2}} \log \frac{|x_j|}{r_j} - \frac{I}{|x_j|^{n-2}} \log \frac52.
\end{align*}
Thus defining $N$ by \eqref{eq2.9}, where $f$ is given by \eqref{eq2.33},
 it follows from \eqref{eq2.34} and Lemma~\ref{lem2.2} that there exists a positive constant $C$ independent of $\xi$ and $j$ such that if we define $u\colon {\bb R}^n\backslash\{0\}\to {\bb R}$ by
\[
 u(x) = N(x) + C|x|^{-(n-2)}
\]
then $u$ is a $C^\infty$ positive solution of
\begin{equation}\label{eq2.36}
 -\Delta^m u = f\quad \text{in}\quad {\bb R}^n\backslash\{0\}
\end{equation}
and for some positive constant $A=A(m,n)$, $u$ satisfies \eqref{eq2.30}.

Also, \eqref{eq2.36} and \eqref{eq2.33} imply that $u$ satisfies \eqref{eq2.28} and \eqref{eq2.29}.
\end{proof}

\begin{rem}\label{rem2.1}
Suppose the hypotheses of Lemma~\ref{lem2.4} hold and $u$ is as in Lemma~\ref{lem2.4}.
\end{rem}

\n {\bf Case 1.} Suppose $2m<n$. Then it follows from \eqref{eq2.28}, \eqref{eq2.29}, and \eqref{eq2.30} that $u$ is a $C^\infty$ positive solution of
\[
 0 \le -\Delta^m u \le |x|^\tau u^\lambda\quad \text{in}\quad {\bb
   R}^n\backslash\{0\},\quad \lambda>0,\quad \tau\in {\bb R},
\]
provided
\[
 \frac{\psi(|x_j|)}{|x_j|^{2m-2}r^n_j} \le 2^{-|\tau|}|x_j|^\tau
\left(\frac{A\psi(|x_j|)}{|x_j|^{2m-2} r^{n-2m}_j}\right)^\lambda
\]
which holds if and only if
\begin{equation}\label{eq2.36-2}
 r_j^{n-\lambda(n-2m)}\ge\frac{2^{|\tau|}}{A^\lambda}
\frac{|x_j|^{(\lambda-1)(2m-2)-\tau}}{\psi(|x_j|)^{\lambda-1}}.
\end{equation}

\n {\bf Case 2.} Suppose $2m=n$. Then it follows from \eqref{eq2.28}, \eqref{eq2.29}, and \eqref{eq2.30} that $u$ is a $C^\infty$ positive solution of
\[
 0 \le -\Delta^mu \le f(u)\quad \text{in}\quad {\bb R}^n\backslash\{0\},
\]
where $f\colon [0,\infty)\to [0,\infty)$ is a nondecreasing continuous function, provided
\begin{equation}\label{eq2.37}
 \frac{\psi(|x_j|)}{|x_j|^{n-2}r^n_j} \le f\left(\frac{A\psi(|x_j|)}{|x_j|^{n-2}} \log \frac{|x_j|}{r_j}\right).
\end{equation}
If $f(u) = u^\lambda, \lambda>1$, then \eqref{eq2.37} holds if and only if
\[
 \log \frac{|x_j|}{r_j} \ge \left(\frac{|x_j|}{r_j}\right)^{\frac{n}\lambda} \frac{|x_j|^a}{A\psi(|x_j|)^{\frac{\lambda-1}\lambda}} \quad \text{where}\quad a = \frac{(n-2)(\lambda-1)-n}\lambda.
\]
If $f(u) = e^{u^\lambda}$, $\lambda>0$, then \eqref{eq2.37} holds if and only if
\[
 \log \frac{\psi(|x_j|)}{|x_j|^{2n-2}} + n \log \frac{|x_j|}{r_j} \le \left(\frac{A\psi(|x_j|)}{|x_j|^{n-2}} \log \frac{|x_j|}{r_j}\right)^\lambda.
\]

\begin{lem}\label{lem2.5}
  Suppose $p>1$ and $R\in (0,2)$ are constants and $g\colon {\bb R}^n\to {\bb
    R}$ is defined by
\[
 g(\xi) = \intl_{|\eta|<R} \left(\log \frac5{|\xi-\eta|}\right) f(\eta)\,d\eta
\]
where $f\in L^1(B_R(0))$, (resp. $f\in L^p(B_R(0))$) . Then
\[
 \|g\|_{L^p(B_R(0))} \le C\|f\|_{L^1(B_R(0))}, \quad 
(\text{resp. }\|g\|_{L^\infty(B_R(0))} \le C\|f\|_{L^p(B_R(0))} ),
\]
where $C=C(n,p,R)$ is a positive constant.
\end{lem}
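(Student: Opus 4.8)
The plan is to view $g$ as the convolution, restricted to $B_R(0)$, of $f$ with the kernel $K(z):=\log\frac5{|z|}$, and to exploit the fact that a logarithmic singularity is integrable to every finite power in ${\bb R}^n$; these are just the endpoint cases of Young's inequality for convolutions. First I would record the elementary geometric remark that if $\xi,\eta\in B_R(0)$ with $R<2$ then $|\xi-\eta|<2R<4<5$, so that $K(\xi-\eta)>\log\frac54>0$ on the relevant region (hence absolute values may be dropped). More to the point, since $B_R(0)-w\subset B_{2R}(0)$ for every $w\in B_R(0)$, translation invariance of Lebesgue measure gives, for each $q\in[1,\infty)$,
\[
\sup_{w\in B_R(0)}\intl_{|\xi|<R} K(\xi-w)^q\,d\xi \le \intl_{|z|<2R} \Big(\log\frac5{|z|}\Big)^q\,dz =: C_q^q<\infty,
\]
the right-hand integrand being bounded near $|z|=2R$ and growing only logarithmically as $z\to0$, hence integrable over $B_{2R}(0)$ for all finite $q$ and all $n\ge2$.

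For the first assertion I would apply Minkowski's inequality for integrals to
\[
g(\xi)=\intl_{|\eta|<R} K(\xi-\eta)\,f(\eta)\,d\eta,
\]
obtaining
\[
\|g\|_{L^p(B_R(0))}\le \intl_{|\eta|<R}\Big(\intl_{|\xi|<R}K(\xi-\eta)^p\,d\xi\Big)^{1/p}|f(\eta)|\,d\eta
\le C_p\,\|f\|_{L^1(B_R(0))}
\]
by the displayed bound with $q=p$. For the second assertion I would instead apply H\"older's inequality in $\eta$ with exponent $p$ and conjugate exponent $p'=p/(p-1)$:
\[
|g(\xi)|\le \Big(\intl_{|\eta|<R}K(\xi-\eta)^{p'}\,d\eta\Big)^{1/p'}\|f\|_{L^p(B_R(0))}\le C_{p'}\,\|f\|_{L^p(B_R(0))}
\]
uniformly in $\xi\in B_R(0)$, again by the displayed bound, this time with $q=p'$. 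Taking $C=\max\{C_p,C_{p'}\}$, which depends only on $n$, $p$, and $R$, finishes both cases.

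The argument is essentially routine; the only point I expect to require (minor) care is the verification that $\big(\log\frac5{|z|}\big)^q$ is integrable over $B_{2R}(0)$ for every finite $q$, together with the use of the hypothesis $R<2$ to guarantee $2R<5$ so that the kernel stays positive on $B_{2R}(0)$. No structure of $f$ or of the underlying polyharmonic problem is needed.
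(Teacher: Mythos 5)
Your proof is correct. For the $L^1\to L^p$ bound the paper proceeds differently in mechanics though not in substance: it splits $|f|=|f|^{1/p}|f|^{1/p'}$, applies H\"older's inequality in $\eta$, and then interchanges the order of integration, which reduces matters to the same kernel estimate you isolate, namely $\sup_{\eta\in B_R(0)}\int_{|\xi|<R}\bigl(\log\frac5{|\xi-\eta|}\bigr)^p\,d\xi<\infty$; you instead invoke Minkowski's inequality for integrals, which reaches that estimate in one step and makes the two assertions appear symmetrically as the endpoint cases of Young's inequality. For the $L^p\to L^\infty$ bound your argument (H\"older in $\eta$ with exponent $p'$, using local $L^{p'}$-integrability of the logarithmic kernel) is exactly what the paper does, since it disposes of that case with the single remark that it follows from H\"older's inequality. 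Your explicit observations that $|\xi-\eta|<2R<4<5$ keeps the kernel positive and that $\bigl(\log\frac5{|z|}\bigr)^q$ is integrable on $B_{2R}(0)$ for every finite $q$ are the only points needing care, and you handle both; the constant you obtain depends only on $n$, $p$, $R$, as required. The trade-off is minor: the paper's route uses only H\"older and Fubini, while yours trades the weight-splitting trick for the (equally standard) integral form of Minkowski's inequality and is somewhat more transparent.
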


\begin{proof}
Define $p'$ by $\frac1p + \frac1{p'} = 1$. Then by H\"older's inequality we have 
\begin{align*}
\intl_{|\xi|<R} |g(\xi)|^p \,d\xi &\le \intl_{|\xi|<R} \left[\,
  \intl_{|\eta|<R} \left(\log \frac5{|\xi-\eta|}\right)
  |f(\eta)|^{1/p} |f(\eta)|^{1/p'} \,d\eta\right]^p d\xi\notag\\
&\le \intl_{|\xi|<R} \left[\left(\, \intl_{|\eta|<R} \left(\log \frac5{|\xi-\eta|}\right)^p |f(\eta)|\,d\eta\right)^{1/p} \left(\, \intl_{|\eta|<R} |f(\eta)|\,d\eta\right)^{1/p'}\right]^p d\xi\notag\\
&= \left(\, \intl_{|\eta|<R} |f(\eta)|\,d\eta\right)^{p/p'} \intl_{|\eta|<R} \left(\, \intl_{|\xi|<R} \left(\log \frac5{|\xi-\eta|} \right)^p d\xi\right) |f(\eta)|\,d\eta\notag\\
&\le C(n,p,R) \left(\, \intl_{|\eta|<R} |f(\eta)|\,d\eta\right)^p.
\end{align*}
The parenthetical part follows from H\"older's inequality.
\end{proof}

\section{Proofs when the singularity is at the origin}\label{sec3}

In this section we prove Theorems~\ref{thm1.4}--\ref{thm1.11} 
and Proposition \ref{pro1.1} which
deal with the case that the singularity is at the origin. 
Theorem \ref{thm1.4} will follow easily from the following more
general result.

\begin{thm}\label{thm3.1}
  Suppose $u(x)$ is a $C^{2m}$ solution of 
\begin{equation}\label{eq3.1} 
0\le -\Delta^mu \le K\sum^{2m-1}_{k=0} 
|x|^{-\alpha_k}(|D^ku|+g_k(x))^{\lambda_k} 
\quad \text{in}\quad B_2(0)\backslash \{0\}\subset {\bb R}^n
\end{equation}
such that
\begin{equation}\label{eqnew1.1}
|x|^{n-2}u(x) \quad \text{is bounded below in $B_2(0) 
\backslash \{0\}$},
\end{equation}
where $K>0$, $\lambda_k$, and $\alpha_k$ are constants,
$m\ge 2$ and $n\ge 2$ are integers, $m$ is odd, $2m<n$, and for 
$k=0,1,\ldots,2m-1$ we have 
\begin{equation}\label{eq3.2}
 0\le \lambda_k < \frac{n}{n-2m+k} 
\end{equation}
and $g_k:B_2(0)\backslash\{0\}\to [1,\infty)$ is a continuous function.
Let
\[
a_k=(n-2+k)+b(n-2m+k)
\]
where
\begin{equation}\label{eq3.3}
  b = \max\left\{0, \max_{0\le k\le 2m-1} 
    \frac{\alpha_k+\lambda_k(n-2+k) - (2m+n-2)}{n-\lambda_k(n-2m+k)}\right\}.
\end{equation}
\begin{itemize}
 \item[\rm (i)] If for $k=0,1,\ldots,2m-1$ we have 
\begin{equation}\label{eq3.4}
 g_k(x) = O(|x|^{-a_k}) \quad \text{as}\quad x\to 0
\end{equation}
then for $i=0,1,\ldots, 2m-1$ we have
\[
 |D^iu(x)| = O(|x|^{-a_i})\quad \text{as}\quad x\to 0.
\]
\item[\rm (ii)] If $b>0$ and for $k=0,1,\ldots,2m-1$ we have
\begin{equation}\label{eq3.5}
 g_k(x) = o(|x|^{-a_k})\quad \text{as}\quad x\to 0
\end{equation}
and
\begin{equation}\label{al}
\lambda_k> 0
\end{equation}
then for $i=0,1,\ldots, 2m-1$ we have
\[
 |D^iu(x)| = o(|x|^{-a_i})\quad \text{as}\quad x\to 0.
\]
\end{itemize}
\end{thm}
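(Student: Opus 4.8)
The plan is to run a bootstrap (iteration) argument on the scales $r_j = |x_j|$, using the rescaled representation formula from Lemma~\ref{lem2.3} to upgrade crude pointwise bounds on $|D^k u|$ into sharper ones, until the exponents stabilize at the claimed values $a_k$. First I would set up the bootstrap: starting from hypothesis \eqref{eqnew1.1}, which gives the bound $|D^0 u(x)| = O(|x|^{-(n-2)})$ (i.e.\ the bound corresponding to $b=0$, $a_0 = n-2$), together with whatever interior-elliptic-estimate bounds on the higher derivatives $|D^k u|$, $k=1,\dots,2m-1$, follow from it on shrinking balls $B_{r_j/2}(x_j)$. Suppose inductively that $|D^k u(x)| = O(|x|^{-c_k})$ with $c_k = (n-2+k) + \beta(n-2m+k)$ for some $\beta \ge 0$. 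Substituting this and \eqref{eq3.4} (or \eqref{eq3.5}) into the right side of \eqref{eq3.1}, one bounds $-\Delta^m u(y)$ on $B_{r_j}(x_j)$, hence bounds $f_j$ from \eqref{eq2.15}, and then \eqref{eq2.17} of Lemma~\ref{lem2.3} — after estimating the Riesz-type convolution $\int_{|\eta|<2} A f_j(\eta)|\xi-\eta|^{-(n-2m+|\beta|)}\,d\eta$ via $\|\cdot\|_{L^1}$-control of $f_j$ (which tends to $0$ by \eqref{eq2.16}, or can be made small) — yields a new exponent. The key computation is that the map $\beta \mapsto$ (new $\beta$) is affine with a contractive-or-expansive slope governed by \eqref{eq3.2}; because $\lambda_k(n-2m+k) < n$ for all $k$, the slope is such that the iteration converges monotonically (from below, since we start at $\beta = 0$) to the fixed point, which is exactly $b$ in \eqref{eq3.3}, the maximum being forced by the worst term in the sum. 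This gives part (i).

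For part (i) more precisely: after choosing the starting crude bounds, each step of the iteration produces $c_k^{\text{new}} = (n-2+k) + \beta^{\text{new}}(n-2m+k)$ where $\beta^{\text{new}} = \max_k \frac{\alpha_k + \lambda_k(n-2+k+\beta(n-2m+k)) - (2m+n-2) + (\text{small }o(1)\text{ from }g_k)}{1}\big/(n-2m)$ — I would write the exact recursion out, separate the $k$-terms, and verify that the fixed point is \eqref{eq3.3} and that the iteration is increasing and bounded, hence stabilizes after finitely many steps once $\beta$ reaches $b$ (or converges; since there are only finitely many $k$ and the exponents are piecewise-linear in $\beta$, it terminates). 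One must also carry the $C(r_j/|x_j|)^{n-2m+|\beta|}$ and $\varepsilon_j$ terms from \eqref{eq2.17}; with $r_j = |x_j|$ the first is $O(1)$ and gets absorbed, and $\varepsilon_j \to 0$. Finally, to pass from the estimate on the specific sequence $x_j$ to the $O(|x|^{-a_i})$ bound as $x \to 0$, I would note the argument applies with $x_j$ any sequence tending to $0$ (and $r_j$ comparable to $|x_j|$), so a standard contradiction/compactness argument upgrades the sequential bound to the full limit.

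For part (ii): once \eqref{eq3.4} is improved to the strict $o(|x|^{-a_k})$ of \eqref{eq3.5}, and using $\lambda_k > 0$ from \eqref{al} together with $b > 0$, I would rerun one more step of the bootstrap at the fixed point: the $o(1)$ improvement in $g_k$, combined with the fact that the convolution term in \eqref{eq2.17} is genuinely $o(1)$ (because $\int f_j \to 0$ by \eqref{eq2.16}, and here $b>0$ means the self-improving term does not merely reproduce itself but strictly contracts the remaining slack), forces $|D^i u(x)| = o(|x|^{-a_i})$ rather than just $O$. The positivity $\lambda_k > 0$ is what makes the $g_k$-improvement actually enter the estimate (if some $\lambda_k = 0$ the corresponding $g_k$-term would be $g_k^0 = 1$ regardless); $b > 0$ ensures the fixed point is in the ``strictly contracting'' regime of the recursion.

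The main obstacle I expect is \textbf{controlling the convolution/Riesz term in \eqref{eq2.17} uniformly in $\xi$ with the right power}, i.e.\ verifying that when one plugs the inductive bound on $|D^k u|$ into the nonlinearity and rescales, the resulting $f_j$ has $L^1(B_2)$ norm of the precise order $(r_j/|x_j|)^{\text{something}} \cdot (\text{power of }|x_j|)$ needed to make $\int_{|\eta|<2} f_j(\eta)|\xi-\eta|^{-(n-2m+|\beta|)}\,d\eta$ come out as $O(|x_j|^{-a_i + (n-2+i)}\cdot(r_j/|x_j|)^{n-2m+i})$ after undoing the scaling — this is where the constraint \eqref{eq3.2} is used crucially (it guarantees $n - \lambda_k(n-2m+k) > 0$, so the denominators in \eqref{eq3.3} are positive and the iteration is well-defined and convergent) and where the bookkeeping of the various powers of $r_j$, $|x_j|$, and the $\lambda_k$-th powers must be done with care. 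A secondary technical point is the case $2m < n$ with $|\beta|$ possibly forcing $n - 2m + |\beta|$ through integer thresholds, but since $m$ is odd and $2m < n$ strictly, \eqref{eq2.17} applies for all $|\beta| < 2m$ without the $|\beta| \ne 0$ caveat, so this does not actually arise.
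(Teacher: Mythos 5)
Your proposal has two genuine gaps, and they are fatal to the strategy rather than just technical. First, your starting point does not exist: hypothesis \eqref{eqnew1.1} says $|x|^{n-2}u(x)$ is bounded \emph{below}, which gives only $u(x)\ge -M|x|^{-(n-2)}$; in the paper it is used solely to reduce to the case $u\ge 0$ (by adding $M|x|^{-(n-2)}$ and noting $-\Delta^m$ kills it). There is no a priori upper bound $|u(x)|=O(|x|^{-(n-2)})$ to seed a bootstrap --- indeed in case (iv) no pointwise upper bound holds at all without the upper bound on $-\Delta^m u$, and producing \emph{any} pointwise bound is the whole content of the theorem (solutions genuinely exceed $|x|^{-(n-2)}$ when $b>0$, cf.\ Theorem~\ref{thm1.5}). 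Second, even granting a starting bound, a pointwise-exponent iteration at the scale $r_j\sim|x_j|$ does not close under \eqref{eq3.2}. If you assume $|D^ku|=O(|x|^{-c_k})$ and feed this into \eqref{eq3.1} and the representation formula \eqref{eq2.10}, the local Riesz integral produces new exponents of the form $c_i'=(n-2+i)+d'$ with the \emph{same} increment $d'$ for every $i$, and the recursion for $d$ is contractive only when all $\lambda_k<1$, not under the assumed subcriticality $\lambda_k<n/(n-2m+k)$; moreover its fixed point has denominators $1-\lambda_k$, not $n-\lambda_k(n-2m+k)$, so it cannot yield the claimed exponents $a_i=(n-2+i)+b(n-2m+i)$ with $b$ as in \eqref{eq3.3}. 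The $i$-dependent increment $b(n-2m+i)$ reflects a second, much smaller scale: the paper argues by contradiction, picks $x_j$ violating the bound, rescales at $r_j=|x_j|^{1+b}/4$ (not $|x_j|$, which would also violate \eqref{eq2.14}), and it is precisely this anisotropic choice, via the prefactors $B_{kj}\le 1$, that makes the rescaled source $f_j$ satisfy the nonlinear integral inequality \eqref{eq3.13}--\eqref{eq3.14}.

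Relatedly, your proposed control of the convolution term "via $L^1$-control of $f_j$" cannot work: an $L^1$ bound on $f_j$ does not bound the Riesz potential $\int f_j(\eta)|\xi-\eta|^{-(n-2m+k)}\,d\eta$ pointwise. The paper's resolution --- and the real heart of the proof --- is to bootstrap the \emph{integrability} of $f_j$: starting from the $L^1$ smallness \eqref{eq2.16}, iterate the Riesz potential estimates of \cite[Lemma~7.12]{GT} on the inequality $f_j\le C\sum_k\bigl(1+\int f_j|\xi-\eta|^{-(n-2m+k)}\,d\eta\bigr)^{\lambda_k}$ finitely many times; condition \eqref{eq3.2} is exactly what guarantees a uniform positive gain in the exponent at each step, yielding $f_j$ bounded (resp.\ tending to zero) in $L^\infty(B_{R_0}(0))$, which contradicts \eqref{eq3.10} (resp.\ \eqref{eq3.11}) obtained from \eqref{eq2.17} at $\xi=0$. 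You correctly identified the convolution term as the main obstacle, but the mechanism you propose to handle it (pointwise bootstrap plus $L^1$ smallness) does not overcome it; the argument must be run on the rescaled right-hand side in $L^p$ spaces, at the scale $|x_j|^{1+b}$.
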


\begin{proof}
It suffices to prove Theorem \ref{thm3.1} when $u$ is
nonnegative. To see this choose $M>0$ such that 
\[
v(x):=u(x)+M|x|^{-(n-2)}>0 \quad\text{for }0<|x|<2,
\]
which is possible by \eqref{eqnew1.1}, and then apply Theorem \ref{thm3.1}
to $v$ after noting that $-\Delta^m v=-\Delta^m u$ and
\[
|D^ku(x)-D^kv(x)|= O(|x|^{-(n-2+k)})=\begin{cases}
O(|x|^{-a_k}) &\text{if $b=0$}\\
o(|x|^{-a_k}) &\text{if $b>0$.}
\end{cases}
\]

Suppose for contradiction that part (i) (resp.\ part (ii)) is
false. Then there exist $i\in \{0,1,2,\ldots, 2m-1\}$ and a sequence
$\{x_j\}^\infty_{j=1} \subset {\bb R}^n$ such that
\[
 0 < 4|x_{j+1}| < |x_j| < 1/2,
\]
and
\begin{align}\label{eq3.7}
 &|x_j|^{a_i} |D^iu(x_j)|\to \infty\quad \text{as}\quad j\to\infty,\\
\label{eq3.8}
&(\text{resp. } \liminf_{j\to\infty} |x_j|^{a_i} |D^iu(x_j)|>0).
\end{align}
Let $r_j =  |x_j|^{b+1}/4$.
Then $x_j$ and $r_j$ satisfy \eqref{eq2.14}. Let $f_j$ be as in
Lemma~\ref{lem2.3}. Since
\begin{equation}\label{eq3.9}
 \frac{r_j}{|x_j|} = \frac{|x_j|^b}{4}
\end{equation}
it follows from \eqref{eq2.17} with $|\beta| = i$ and $\xi=0$ that
\[
 \frac{|x_j|^{n-2+i+b(n-2m+i)}}{4^{n-2m+i}} |D^iu(x_j)| \le C|x_j|^{(n-2m+i)b} + \vp_j + \intl_{|\eta|<2} \frac{Af_j(\eta)d\eta}{|\eta|^{n-2m+i}}.
\]
Hence \eqref{eq3.7} (resp.\ \eqref{eq3.8}) implies
\begin{align}\label{eq3.10}
 &\intl_{|\eta|<2} \frac{f_j(\eta)d\eta}{|\eta|^{n-2m+i}} \to \infty\quad \text{as}\quad j\to\infty\\
\label{eq3.11}
&\left(\text{resp. } \liminf_{j\to\infty} \intl_{|\eta|<2} \frac{f_j(\eta)d\eta}{|\eta|^{n-2m+i}}>0\right).
\end{align}

On the other hand, \eqref{eq2.15}, \eqref{eq3.1}, and \eqref{eq2.17} imply for $|\xi|<1$ that
\begin{align}
 f_j(\xi) &\le |x_j|^{2m+n-2} \left(\frac{r_j}{|x_j|}\right)^n
 K\sum^{2m-1}_{k=0} |x_j+r_j\xi|^{-\alpha_k}
(|D^ku(x_j+r_j\xi)|+g_k(x_j+r_j\xi))^{\lambda_k}\notag\\
&\le C\sum^{2m-1}_{k=0} \frac{|x_j|^{2m+n-2}
  \left(\frac{r_j}{|x_j|}\right)^n |x_j|^{-\alpha_k}}
{\left(|x_j|^{n-2+k} \left(\frac{r_j}{|x_j|}\right)^{n-2m+k}\right)^{\lambda_k}} \bigg(\left(\frac{r_j}{|x_j|}\right)^{n-2m+k} + \vp_j + \intl_{|\eta|<2} \frac{f_j(\eta)d\eta}{|\xi-\eta|^{n-2m+k}}\notag\\
\label{eq3.12}
&\quad +|x_j|^{n-2+k} \left(\frac{r_j}{|x_j|}\right)^{n-2m+k} g_k(x_j+r_j\xi)\bigg)^{\lambda_k},
\end{align}
where $C$ is a constant independent of $\xi$ and $j$ whose value may
change from line to line.
But \eqref{eq3.9} and \eqref{eq3.3} imply
\begin{align*}
 \frac{|x_j|^{2m+n-2}
   \left(\frac{r_j}{|x_j|}\right)^n|x_j|^{-\alpha_k}}{\left(|x_j|^{n-2+k}
     \left(\frac{r_j}{|x_j|}\right)^{n-2m+k} \right)^{\lambda_k}} 
&= |x_j|^{(2m+n-2)-\lambda_k(n-2+k)-\alpha_k} 
\left(\frac{r_j}{|x_j|}\right)^{n-\lambda_k(n-2m+k)}\\
&\le |x_j|^{(2m+n-2)-\lambda_k(n-2+k) -\alpha_k + (n-\lambda_k(n-2m+k))b}\\
&\le 1,\\
|x_j|^{n-2+k} \left(\frac{r_j}{|x_j|}\right)^{n-2m+k} 
&\le |x_j|^{n-2+k+b(n-2m+k)}\\
&= |x_j|^{a_k},\\
\intertext{and}
\left(\frac{r_j}{|x_j|}\right)^{n-2m+k} &\le |x_j|^{b(n-2m+k)}.
\end{align*}
Hence by \eqref{eq3.4}, (resp. \eqref{eq3.5}) and \eqref{eq3.12} we have
\begin{align}\label{eq3.13}
 f_j(\xi) &\le C\sum^{2m-1}_{k=0} \bigg(1 + \intl_{|\eta|<2} \frac{f_j(\eta)d\eta}{|\xi-\eta|^{n-2m+k}}\bigg)^{\lambda_k} \quad \text{for}\quad |\xi|<1,\\
\label{eq3.14}
\bigg(\text{resp. } f_j(\xi) &\le C\sum^{2m-1}_{k=0} \bigg(\vp_j +\intl_{|\eta|<2} \frac{f_j(\eta)d\eta}{|\xi-\eta|^{n-2m+k}}\bigg)^{\lambda_k}\quad \text{for}\quad |\xi|<1\bigg).
\end{align}
Since
\[
 \intl_{2R\le |\eta|<2} \frac{f_j(\eta)d\eta}{|\xi-\eta|^{n-2m+k}} \le \frac{1}{R^{n-2m+k}} \intl_{|\eta|<2} f_j(\eta)d\eta \quad \text{for}\quad |\xi|<R<1
\]
we have by \eqref{eq3.13}, (resp.\ \eqref{eq3.14}), and \eqref{eq2.16} that 
\begin{equation}\label{eq3.14-2}
 f_j(\xi) \le C\sum^{2m-1}_{k=0} \left(\frac{1}{R^{n-2m+k}} 
+ \intl_{|\eta|<2R} \frac{f_j(\eta)d\eta}{|\xi-\eta|^{n-2m+k}} \right)^{\lambda_k} \quad \text{for}\quad |\xi|<R\le 1
\end{equation}
where $C$ is independent of $\xi,j$, and $R$, (resp.
\begin{equation}\label{eq3.14-3}
 f_j(\xi) \le C\sum^{2m-1}_{k=0} \left(\frac{\vp_j}{R^{n-2m+k}} 
+ \intl_{|\eta|<2R} \frac{f_j(\eta)d\eta}{|\xi-\eta|^{n-2m+k}}\right)^{\lambda_k}\quad \text{for}\quad |\xi|<R\le 1
\end{equation}
where $\vp_j$ is independent of $\xi$ and $R$ and $\vp_j\to 0$ as $j\to\infty$).

We can assume the $\lambda_k$ in \eqref{eq3.14-2} satisfy, instead of 
\eqref{eq3.2}, the stronger condition 
\begin{equation}\label{eq3.14-4}
0<\lambda_k<\frac{n}{n-2m+k}
\end{equation}
because slightly increasing those $\lambda_k$ in \eqref{eq3.14-2} which
are zero will increase the right side of \eqref{eq3.14-2}. By
\eqref{eq3.2} and \eqref{al} the $\lambda_k$ in \eqref{eq3.14-3}
already satisfy \eqref{eq3.14-4}.

It follows from \eqref{eq3.14-2}, (resp. \eqref{eq3.14-3}), and Riesz
potential estimates (see \cite[Lemma~7.12]{GT}) that if the functions
$f_j$ are bounded (resp.\ tend to zero) in $L^p(B_{2R}(0))$ for some
$p\ge 1$ and $R\in (0,1]$ then the functions $f_j$ are bounded (resp.\
tend to zero) in $L^q(B_R(0))$, $0<q\le\infty$, provided
\[
\frac1p-\frac1{q\lambda_k} < \frac{2m-k}{n} \quad\text{for}\quad
k=0,1,\ldots, 2m-1,
\]
which holds if and only if 
\[
\frac1p-\frac1q < \min\limits_{0\le k\le
  2m-1}\left(\frac{(2m-k)\lambda_k}{n}-\frac{\lambda_k-1}{p}\right).
\]
However, 
\[
\inf_{p\ge 1}\frac{(2m-k)\lambda_k}{n}-\frac{\lambda_k-1}{p}
=\min\left\{\frac{n-\lambda_k(n-2m+k)}{n},\frac{\lambda_k(2m-k)}{n}\right\}
\]
which, by \eqref{eq3.14-4}, is bounded below by some positive constant
independent of $k$. 
So starting with \eqref{eq2.16} and iterating the above $L^p$ to $L^q$
comment a finite number of times, we see that there exists $R_0\in
(0,1)$ such that the functions $f_j$ are bounded (resp.\ tend to zero)
in $L^\infty(B_{R_0}(0))$ which together with \eqref{eq2.16}
contradicts \eqref{eq3.10} (resp.\ \eqref{eq3.11}) and thereby
completes the proof of Theorem~\ref{thm3.1}.
\end{proof}

\begin{proof}[Proof of Theorem \ref{thm1.4}]
For some constant $K>0$, $u$ satisfies
\begin{equation}\label{simple}
0\le \Delta^mu\le K(u^\lambda+1) \quad \text{in}\quad 
B_2(0)\backslash \{0\}\subset {\bb R}^n. 
\end{equation}
Thus $u$ satisfies \eqref{eq3.1} with $\lambda_0=\lambda$,
$\alpha_0=0$, $g_0(x)\equiv 1$, and
$\lambda_k=1$, $\alpha_k=-(n-2+k)$, $g_k(x)\equiv 1$ for
$k=1,2,\ldots,2m-1$. 

Let $b$ and $a_k$ be as in Theorem \ref{thm3.1}. Then
\[
b=0, \quad 
\left(\text{resp. } b=\frac{\lambda(n-2)-(2m+n-2)}{n-\lambda(n-2m)}>0\right)
\]
and so $a_0=n-2$, (resp. $a_0=a$, where $a$ is given by
\eqref{eq1.8-2}). Hence \eqref{eq1.7}, (resp. \eqref{eq1.8}), follows from
part (i), (resp. part (ii)), of Theorem \ref{thm3.1}.
\end{proof}

\begin{proof}[Proof of Theorem \ref{thm1.5}]
Define $\psi\colon (0,1)\to (0,1)$ by
\begin{equation}\label{eq3.15}
\psi(r) = \max\left\{\varphi(r)^p, r^{\frac{n-\lambda(n-2m)}{\lambda-1}\frac{b}2}\right\}
\end{equation}
where
\[
b :=  \frac{\lambda(n-2) -(2m+n-2)}{n-\lambda(n-2m)}\quad \text{and}\quad p := \frac{n-\lambda(n-2m)}{4m}
\]
are positive by \eqref{eq1.9}. Let $\{x_j\}^\infty_{j=1} \subset {\bb
  R}^n$ be a sequence satisfying \eqref{eq2.25} and
\eqref{eq2.26}. Define $r_j>0$ by \eqref{eq2.36-2} with the greater than sign
replaced with an equal sign and with $\tau=0$. Then by \eqref{eq3.15}
\begin{align}\label{eq3.16}
r_j &= A^{\frac{-\lambda}{n-\lambda(n-2m)}} \frac{|x_j|^{1+b}}{\psi(|x_j|)^{\frac{\lambda-1}{n-\lambda(n-2m)}}}\\
&\le A^{\frac{-\lambda}{n-\lambda(n-2m)}} |x_j|^{1+b/2}.\notag
\end{align}
Thus by taking a subsequence of $j$, $r_j$ will satisfy \eqref{eq2.27}.

Let $u$ be as in Lemma~\ref{lem2.4}. Then by Case~1 of
Remark~\ref{rem2.1}, $u$ is a $C^\infty$ positive solution of
\eqref{eq1.10} and by \eqref{eq2.30}, \eqref{eq3.16}, \eqref{eq3.15},
and \eqref{eq1.8-2} we have
\begin{align*}
u(x_j) &\ge \frac{A\psi(|x_j|)}{|x_j|^{2m-2}} \frac{A^{\frac{\lambda(n-2m)}{n-\lambda(n-2m)}} \psi(|x_j|)^{\frac{(\lambda-1)(n-2m)}{n-\lambda(n-2m)}}}{|x_j|^{(n-2m)(1+b)}}\\
&= C(m,n,\lambda) \frac{\psi(|x_j|)^{\frac{2m}{n-\lambda(n-2m)}}}{|x_j|^{n-2+(n-2m)b}}\\
&\ge C(m,n,\lambda) \frac{\varphi(|x_j|)^{1/2}}{|x_j|^a}
\end{align*}
which implies \eqref{eq1.11}.
\end{proof}

\begin{proof}[Proof of Theorem \ref{thm1.6}]
Define $\psi\colon (0,1)\to (0,1)$ by $\psi(r) = r^{m-1}$. Let $\{x_j\}^\infty_{j=1} \subset {\bb R}^n$ be a sequence satisfying \eqref{eq2.25}, \eqref{eq2.26}, and
\begin{equation}\label{eq3.18}
\frac{1}{A^\lambda}\frac{|x_j|^{(\lambda-1)(2m-2)}}{\psi(|x_j|)^{\lambda-1}}
=A^{-\lambda}|x_j|^{(\lambda-1)(m-1)}<1
\end{equation}
where $A=A(m,n)$ is as in Lemma~\ref{lem2.4}. Let $\{r_j\}^\infty_{j=1} \subset {\bb R}$ be a sequence satisfying \eqref{eq2.27} and
\begin{equation}\label{eq3.19}
 \frac{A\psi(|x_j|)}{|x_j|^{2m-2}r^{n-2m}_j} > \varphi(|x_j|)^2.
\end{equation}
Since $r_j<1$ and $n-\lambda(n-2m)\le 0$, we see that the left side of
\eqref{eq2.36-2} is greater than or equal to one. Thus \eqref{eq3.18}
implies \eqref{eq2.36-2} with $\tau=0$.  Let $u$ be as in
Lemma~\ref{lem2.4}. Then by \eqref{eq2.30} and \eqref{eq3.19}
\[
 \frac{u(x_j)}{\varphi(|x_j|)} \ge \varphi(|x_j|) \to \infty\quad \text{as}\quad j\to\infty
\]
and by Case~1 of Remark~\ref{rem2.1}, $u$ is a $C^\infty$ positive
solution of \eqref{eq1.10}.
\end{proof}

Theorem \ref{thm1.7} will follow easily from the following more
general result.

\begin{thm}\label{thm3.2}
Suppose $u$ is a $C^{2m}$  solution of \eqref{eq3.1} satisfying
  \eqref{eqnew1.1}
where $K>0$, $\lambda_k$, and $\alpha_k$ are constants; $m\ge 2$, and
$n\ge 2$ are integers, $m$ is odd, $2m=n$; 
\begin{equation}\label{l0}
\lambda_0\ge 0 \quad \text{and}\quad 0\le \lambda_k<n/k \quad \text{for}\quad 
k=1,2,\ldots ,n-1;
\end{equation}
and $g_k\colon B_2(0)\backslash\{0\} \to [1,\infty)$ is a continuous
function. Let
\[
 b = \max\{0,b_0,b_1,\ldots, b_{n-1}\} \quad \text{where}\quad 
b_k = \frac{\alpha_k+\lambda_k(n-2+k) - (2n-2)}{n-k\lambda_k}.
\]
\begin{itemize}
 \item[\rm (i)] Suppose as $x\to 0$ we have 
\begin{align}\label{g0}
g_0(x)&=\begin{cases}
O(|x|^{-(n-2)}) &\text{if $b=0$}\\
O(|x|^{-(n-2)}\log\frac{5}{|x|}) &\text{if $b>0$}
\end{cases}\\
\intertext{and for $k=1,2,\ldots,n-1$ we have}
\label{gk}
g_k(x)&=\begin{cases}
O(|x|^{-(n-2+k)}) &\text{if $b=0$}\\
O(|x|^{-(n-2+k)}a(x)^{-k}) &\text{if $b>0$.}
\end{cases}\\
\intertext{where} 
\label{a(x)}
a(x) &= \min\left\{\frac{|x|^{b_0}}{\left(\log\frac5{|x|}\right)^{\lambda_0/n}}, |x|^{b_1},\ldots, |x|^{b_{n-1}}\right\}.
\end{align}
Then as $x\to 0$ we have
\begin{equation}\label{u}
u(x)=\begin{cases}
O(|x|^{-(n-2)}) &\text{if $b=0$}\\
O(|x|^{-(n-2)}\log\frac{5}{|x|}) &\text{if $b>0$}
\end{cases}
\end{equation}
and for $i=1,2,\ldots,n-1$ we have
\begin{equation}\label{Diu}
|D^iu(x)|=\begin{cases}
O(|x|^{-(n-2+i)}) &\text{if $b=0$}\\
O(|x|^{-(n-2+i)}a(x)^{-i}) &\text{if $b>0$.}
\end{cases}
\end{equation}
\item[\rm (ii)] Suppose $b>0$,
\begin{equation}\label{lk}
\lambda_k>0\quad\text{for}\quad k=0,1,\ldots,n-1,
\end{equation}
and as $x\to 0$ we have
\begin{align}\label{g02}
g_0(x) &= o\left(|x|^{-(n-2)} \log \frac{5}{|x|}\right)\\
\intertext{and} 
\label{gk2}
g_k(x) &= o\left(|x|^{-(n-2+k)} a(x)^{-k}\right) \quad\text{for}
\quad k=1,2,\ldots,n-1,
\end{align}
where $a(x)$ is defined by \eqref{a(x)}. Then 
then as $x\to 0$ we have
\begin{align}\label{u2}
 u(x) &= o\left(|x|^{-(n-2)} \log \frac5{|x|}\right)\\
\intertext{and}
\label{Di2}
 |D^iu(x)| &= o(|x|^{-(n-2+i)} a(x)^{-i})\quad \text{for}\quad i=1,2,\ldots, n-1.
\end{align}
\end{itemize}
\end{thm}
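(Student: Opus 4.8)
The plan is to run the argument of Theorem~\ref{thm3.1} in the borderline range $2m=n$, replacing the estimate \eqref{eq2.17} of Lemma~\ref{lem2.3} for $|\beta|=0$ by the logarithmic estimate \eqref{eq2.18}, and replacing the $k=0$ Riesz potential estimate by Lemma~\ref{lem2.5}. As in Theorem~\ref{thm3.1}, I would first reduce to nonnegative $u$: passing from $u$ to $v=u+M|x|^{-(n-2)}$ with $M$ large leaves $-\Delta^m v=-\Delta^m u$ unchanged, makes $v$ positive by \eqref{eqnew1.1}, and enlarges each $g_k$ only by $O(|x|^{-(n-2+k)})$, which stays within the margin allowed by \eqref{g0}--\eqref{gk} in part (i) and, since \eqref{lk} holds and $a(x)\to0$ (because $b>0$), is $o(\cdot)$ of the right sides of \eqref{g02}--\eqref{gk2} in part (ii).

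Next I would argue by contradiction. If the conclusion fails there are $i\in\{0,1,\ldots,n-1\}$ and a sequence $x_j\to0$ with $4|x_{j+1}|<|x_j|<1/2$ along which $|x_j|^{n-2+i}a(x_j)^{i}|D^iu(x_j)|$ — read as $\frac{|x_j|^{n-2}}{\log(5/|x_j|)}u(x_j)$ when $i=0$, $b>0$, and with $a(x_j)$ deleted when $b=0$ — tends to $\infty$ in part (i), resp.\ has positive $\liminf$ in part (ii). I would then set $r_j=\frac14|x_j|a(x_j)$ (so $r_j=\frac14|x_j|$ when $b=0$); after passing to a subsequence $x_j,r_j$ satisfy \eqref{eq2.14}, $r_j/|x_j|=\frac14a(x_j)$, and $\log\frac{|x_j|}{r_j}\asymp\log\frac5{|x_j|}$ when $b>0$. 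Since $a(x)\le|x|^{b_k}$ for $k=1,\ldots,n-1$ and $a(x)\le|x|^{b_0}(\log\frac5{|x|})^{-\lambda_0/n}$, the rescaled estimates \eqref{eq2.18} (if $i=0$) and \eqref{eq2.17} with $|\beta|=i$ (if $i\ge1$) evaluated at $\xi=0$, together with \eqref{eq2.16}, would give
\[
\intl_{|\eta|<2}\Big(\log\tfrac5{|\eta|}\Big)f_j(\eta)\,d\eta\to\infty\quad(i=0),\qquad \intl_{|\eta|<2}\frac{f_j(\eta)}{|\eta|^{i}}\,d\eta\to\infty\quad(i\ge1),
\]
resp.\ with positive $\liminf$ in part (ii).

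The heart of the proof is then the reverse bound on $f_j$. Substituting \eqref{eq2.18} into the $k=0$ summand and \eqref{eq2.17} into the $k\ge1$ summands on the right of \eqref{eq3.1}, using \eqref{g0}--\eqref{gk} (resp.\ \eqref{g02}--\eqref{gk2}) and the definitions of $b_k$, $b$, $a(x)$, I expect every coefficient that arises to be bounded (indeed the non-potential terms to be $o(1)$ in part (ii)), so that for $|\xi|<R\le1$
\[
f_j(\xi)\le C\Big(1+\intl_{|\eta|<2R}\big(\log\tfrac5{|\xi-\eta|}\big)f_j\,d\eta\Big)^{\lambda_0}+C\sum_{k=1}^{n-1}\Big(\tfrac1{R^{k}}+\intl_{|\eta|<2R}\frac{f_j\,d\eta}{|\xi-\eta|^{k}}\Big)^{\lambda_k}
\]
with $C$ independent of $\xi,j,R$ (resp.\ with $1$ and $1/R^k$ replaced by $\vp_j$ and $\vp_j/R^k$, $\vp_j\to0$), after peeling off the tail $2R\le|\eta|<2$ via \eqref{eq2.16} as in Theorem~\ref{thm3.1} and noting, as there, that one may assume $0<\lambda_k<n/k$ for $k\ge1$. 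Then I would bootstrap starting from \eqref{eq2.16}: by Lemma~\ref{lem2.5} the $k=0$ term is never the bottleneck (the logarithmic potential maps $L^1$ into every $L^p$, $p<\infty$, and maps $L^p$, $p>1$, into $L^\infty$, so raising it to the power $\lambda_0\ge0$ costs nothing), while the $k\ge1$ terms improve the integrability of $f_j$ by the Riesz potential estimates \cite[Lemma~7.12]{GT}, the gain per step being bounded below by a positive constant because $0<\lambda_k<n/k$. After finitely many iterations $f_j$ is bounded (resp.\ tends to zero) in $L^\infty(B_{R_0}(0))$ for some $R_0\in(0,1)$; since $|\eta|^{-i}$ and $\log\frac5{|\eta|}$ are integrable on $B_{R_0}(0)$ and $\int_{|\eta|<2}f_j\to0$, the two integrals of the previous display would then be bounded (resp.\ tend to zero), contradicting that paragraph and finishing the proof.

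The step I expect to be the main obstacle is precisely this boundedness of the coefficients in the $f_j$-inequality, and inside it the $k=0$ term: because the fundamental solution is logarithmic when $2m=n$, estimate \eqref{eq2.18} unavoidably carries a factor $\log\frac{|x_j|}{r_j}\sim\log\frac1{|x_j|}$, and the non-power correction $(\log\frac5{|x|})^{-\lambda_0/n}$ built into $a(x)$ is exactly what is needed to kill it — with $r_j=\frac14|x_j|a(x_j)$ the powers of $|x_j|$ cancel through the definition of $b$ while that stray logarithm is swallowed by the logarithm hidden in $a(x_j)$. Everything else — the exponent identities for the $b_k$, the comparisons $a(x)\le|x|^{b_k}$, the tail splitting, and the $L^p\to L^q$ iteration — should go through essentially as in the proof of Theorem~\ref{thm3.1}.
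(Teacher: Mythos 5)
Your proposal is correct and follows essentially the same route as the paper's proof: reduce to $u\ge 0$, argue by contradiction along a sequence $x_j$, rescale with $r_j\asymp|x_j|a(x_j)$ (the paper uses $a_b(x)$ equal to $\tfrac14$ if $b=0$ and $a(x)$ if $b>0$), apply Lemma~\ref{lem2.3} ((\ref{eq2.17}) for $i\ge1$, (\ref{eq2.18}) for $i=0$), check that the coefficients $B_{kj},G_{kj}$ are bounded exactly via the logarithm built into $a(x)$, and bootstrap with Riesz potential estimates plus Lemma~\ref{lem2.5} to get $L^\infty$ control of $f_j$ contradicting the blow-up of the potentials. The details you leave as "expected" (the coefficient bounds and the $L^p$-iteration) are precisely the computations carried out in the paper, so no gap.
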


\begin{proof}
As in the proof of Theorem \ref{thm3.1}, it suffices to prove Theorem
\ref{thm3.2} when $u$ is nonnegative.

For $b\ge 0$ and $0<|x|<2$ we define
\[
a_b(x)=\begin{cases}
\frac{1}{4} &\text{if $b=0$}\\
a(x) &\text{if $b>0$}
\end{cases} 
\]
where $a(x)$ is given by \eqref{a(x)}. Then 
\begin{equation}\label{loga}
\log\frac{1}{a_0(x)}=\log 4  \quad \text{and} \quad 
\lim_{x\to 0} \frac{\log\frac{1}{a_b(x)}}{\log\frac{5}{|x|}} =b
\quad \text{when}\quad b>0.
\end{equation}
Thus \eqref{u}, \eqref{Diu}, \eqref{u2}, and \eqref{Di2} are
equivalent to 
\begin{align}\label{u3}
u(x)&=O\left(|x|^{-(n-2)}\log\frac{1}{a_b(x)}\right)\\ 
\label{Diu3}|D^iu(x)|&=O\left(|x|^{-(n-2+i)}a_b(x)^{-i}\right)\\ 
\label{u4}u(x)&=o\left(|x|^{-(n-2)}\log\frac{1}{a_b(x)}\right)\\ 
\label{Diu4}|D^iu(x)|&=o\left(|x|^{-(n-2+i)}a_b(x)^{-i}\right)
\end{align}
and similarly for  \eqref{g0}, \eqref{gk}, \eqref{g02}, and
\eqref{gk2}.

Suppose for contradiction that part (i), (resp. part (ii)), of Theorem
\ref{thm3.2} is false. Then there exists $i_0\in \{0,1,\ldots,n-1\}$
such that the estimate \eqref{u3}, \eqref{Diu3}, (resp. \eqref{u4},
\eqref{Diu4}), for $D^iu$ does not hold when $i=i_0$. Thus there is a
sequence $\{x_j\}^\infty_{j=1} \subset {\bb R}^n$ satisfying
\begin{equation}\label{xj}
 0 < 4|x_{j+1}| < |x_j| < 1/2\quad \text{and}\quad a_b(x_j)\le \frac14
\end{equation}
such that
\begin{align}\label{limu}
 \liminf_{j\to\infty} \frac{|x_j|^{n-2}}{\log \frac1{a_b(x_j)}} u(x_j) 
&= \infty, \quad (\text{resp. } >0)\quad \text{if} \quad i_0=0\\
\label{limDiu}
\liminf_{j\to\infty} |x_j|^{n-2+i_0} a_b(x_j)^{i_0} |D^{i_0}u(x_j)| 
&= \infty,\quad (\text{resp. } >0) \quad \text{if}
\quad i_0\in \{1,2,\ldots, n-1\}.
\end{align}
Let \begin{equation}\label{rj}
r_j = |x_j|a_b(x_j). 
\end{equation}
Then by \eqref{xj}, the sequences $x_j$ and
$r_j$ satisfy \eqref{eq2.14}. Let $f_j$ be as in Lemma~\ref{lem2.3}.

Using \eqref{rj}, \eqref{limu}, and
\eqref{limDiu}, it follows from \eqref{eq2.18} and \eqref{eq2.17} with
$\xi=0$ and $|\beta|=i_0$ that
\begin{align}\label{eq3.31}
 &\liminf_{j\to\infty} \intl_{|\eta|<2} \left(\log
   \frac5{|\eta|}\right) 
f_j(\eta)\,d\eta = \infty,\quad (\text{resp. } >0)\quad \text{if}\quad i_0=0\\
\intertext{and}
\label{eq3.32}
&\liminf_{j\to\infty} \intl_{|\eta|<2} \frac{f_j(\eta)}{|\eta|^{i_0}} d\eta = \infty, \quad (\text{resp. } >0)\quad \text{if} \quad i_0 \in \{1,2,\ldots, n-1\}.
\end{align}

On the other hand, \eqref{eq2.15}, \eqref{eq3.1}, \eqref{eq2.18}, and
\eqref{eq2.17} imply for $|\xi|<1$ that
\begin{align*}
 f_j(\xi) &\le |x_j|^{2n-2} \left(\frac{r_j}{|x_j|}\right)^n 
C\sum^{n-1}_{k=0} |x_j|^{-\alpha_k}(|D^ku(x_j+r_j\xi)| + g_k(x_j+r_j\xi))^{\lambda_k}\\
&\le CB_{0j} \left(\frac{1}{\log \frac{|x_j|}{r_j}} + \vp_j + \frac1{\log \frac{|x_j|}{r_j}} \intl_{|\eta|<2}  \left(\log \frac5{|\xi-\eta|}\right) f_j(\eta)\,d\eta + G_{0j}(\xi)\right)^{\lambda_0}\\
&\quad + C\sum^{n-1}_{k=1} B_{kj} \left(\left(\frac{r_j}{|x_j|}\right)^k + \vp_j + \intl_{|\eta|<2} \frac{f_j(\eta)}{|\xi-\eta|^k} d\eta + G_{kj}(\xi)\right)^{\lambda_k}
\end{align*}
where C is a constant independent of $\xi$ and $j$  whose value may
change from line to line,
\begin{align*}
 B_{0j} &:= \frac{|x_j|^{2n-2}
   \left(\frac{r_j}{|x_j|}\right)^n|x_j|^{-\alpha_0}}{\left(\frac{|x_j|^{n-2}}{\log
       \frac{|x_j|}{r_j}}\right)^{\lambda_0}} =
 \left(\frac{a_b(x_j)\left(\log
       \frac1{a_b(x_j)}\right)^{\lambda_0/n}}{|x_j|^{b_0}}\right)^n\le C,\\
B_{kj} &:= \frac{|x_j|^{2n-2} \left(\frac{r_j}{|x_j|}\right)^n|x_j|^{-\alpha_k}}{\left(\left(\frac{r_j}{|x_j|}\right)^k |x_j|^{n-2+k}\right)^{\lambda_k}} = \left(\frac{a_b(x_j)}{|x_j|^{b_k}}\right)^{n-k\lambda_k} \le 1,\\
G_{0j}(\xi) &:= \frac{|x_j|^{n-2}}{\log \frac{|x_j|}{r_j}}
g_0(x_j+r_j\xi) =\frac{|x_j|^{n-2}}{\log \frac1{a_b(x_j)}}
g_0(x_j+r_j\xi)\le C,\\
G_{kj}(\xi) &:= \left(\frac{r_j}{|x_j|}\right)^k |x_j|^{n-2+k}
g_k(x_j+r_j\xi) = a_b(x_j)^k |x_j|^{n-2+k} g_k(x_j + r_j\xi)\le C,
\end{align*}
where we have used \eqref{loga} and \eqref{a(x)}.
Therefore for $|\xi|<1$ we have
\begin{align}\label{eq3.33}
 f_j(\xi) &\le C\bigg[\bigg(1 + \intl_{|\eta|<2} \bigg(\log \frac5{|\xi-\eta|}\bigg) f_j(\eta)\,d\eta\bigg)^{\lambda_0} + \sum^{n-1}_{k=1} \bigg(1 + \intl_{|\eta|<2} \frac{f_j(\eta)\,d\eta}{|\xi-\eta|^k} \bigg)^{\lambda_k}\bigg],\\
\label{eq3.34}
\bigg(\text{resp. } f_j(\xi) &\le C\bigg[\bigg(\vp_j + \intl_{|\eta|<2} \bigg(\log \frac5{|\xi-\eta|}\bigg) f_j(\eta)\,d\eta \bigg)^{\lambda_0} + \sum^{n-1}_{k=1} \bigg(\vp_j + \intl_{|\eta|<2} \frac{f_j(\eta)\,d\eta}{|\xi-\eta|^k} \bigg)^{\lambda_k} \bigg]\bigg),
\end{align}
where $\vp_j$ is independent of $\xi$, and $\vp_j\to 0$ as $j\to\infty$.

We can assume the $\lambda_k$ in \eqref{eq3.33} satisfy, instead of 
\eqref{l0}, the stronger condition 
\begin{equation}\label{l02}
\lambda_0>0 \quad \text{and}\quad 0<\lambda_k<n/k \quad \text{for}\quad 
k=1,2,\ldots ,n-1
\end{equation}
because slightly increasing those $\lambda_k$ in \eqref{eq3.33} which
are zero will increase the right side of \eqref{eq3.33}. By
\eqref{l0} and \eqref{lk} the $\lambda_k$ in \eqref{eq3.34}
already satisfy \eqref{l02}.

Using an argument very similar to the one used at the end of the proof
of Theorem~\ref{thm3.1} to show that \eqref{eq3.13},
(resp. \eqref{eq3.14}), leads to a contradiction of \eqref{eq3.10},
(resp. \eqref{eq3.11}), one can show that \eqref{eq3.33},
(resp. \eqref{eq3.34}), leads to a contradiction of \eqref{eq3.31},
(resp.\ \eqref{eq3.32})---the only significant difference being that
where we used Riesz potential estimates in the proof of
Theorem~\ref{thm3.1}, we must now use Riesz potential estimates
\emph{and} Lemma~\ref{lem2.5}.
\end{proof}

\begin{proof}[Proof of Theorem \ref{thm1.7}]
For  some constant $K>0$, $u$ satisfies \eqref{simple}. Thus $u$
satisfies \eqref{eq3.1} with $\lambda_k$, $\alpha_k$, and $g_k(x)$ as
in the proof of Theorem \ref{thm1.4}. Let $b$ be as in Theorem
\ref{thm3.2}. Then 
\[
b=0, \quad 
\left(\text{resp. } b=\frac{\lambda(n-2)-(2n-2)}{n}>0\right).
\]
Hence \eqref{eq1.13}, (resp. \eqref{eq1.14}), follows from
part (i), (resp. part (ii)), of Theorem \ref{thm3.2}.
\end{proof}

\begin{proof}[Proof of Theorem \ref{thm1.8}]
It follows from \eqref{eq1.15} that 
\[
 a := \frac{(n-2)(\lambda-1)-n}{\lambda} >0.
\]
Define $\psi\colon (0,1)\to (0,1)$ and $\rho\colon (0,1)\to (0,\infty)$ by 
\begin{equation}\label{eq3.44}
 \psi(r) = \max\{\sqrt{\varphi(r)}, r^{\frac{a\lambda}{2(\lambda-1)}}\}
\end{equation}
and
\begin{equation}\label{eq3.45}
 \rho(r) = \frac{n}{\lambda A} \frac{r^a}{\psi(r)^{\frac{\lambda-1}\lambda}}
\end{equation}
where $A=A(m,n)$ is as in Lemma~\ref{lem2.4}. By \eqref{eq3.44}
\begin{equation}\label{eq3.46}
 \rho(r) \le \frac{n}{\lambda A} r^{a/2}.
\end{equation}
Thus there exists a sequence $\{x_j\}^\infty_{j=1} \subset {\bb R}^n$ satisfying \eqref{eq2.25}, \eqref{eq2.26}, and
\begin{equation}\label{eq3.47}
 e^{-1} > \rho_j := \rho(|x_j|) \to 0\quad \text{as}\quad j\to\infty
\end{equation}
 such that if we define the sequence $\{r_j\}^\infty_{j=1}$ by
\begin{equation}\label{eq3.48}
 \left(\frac{|x_j|}{r_j}\right)^{n/\lambda} = \frac1{\rho_j} \log \frac1{\rho_j}
\end{equation}
then $r_j$ satisfies \eqref{eq2.27}. By \eqref{eq3.48}, \eqref{eq3.47}, and \eqref{eq3.45} we have
\begin{align}
 \log \frac{|x_j|}{r_j} &= \frac\lambda{n} \log\left(\frac1{\rho_j} \log \frac1{\rho_j}\right)\notag\\
\label{eq3.49}
&\ge \frac\lambda{n} \log \frac1{\rho_j}\\
&= \frac\lambda{n} \rho_j\left(\frac{|x_j|}{r_j}\right)^{n/\lambda}\notag\\
\label{eq3.50}
&= \frac1A \frac{|x_j|^a}{\psi(|x_j|)^{\frac{\lambda-1}\lambda}} \left(\frac{|x_j|}{r_j}\right)^{n/\lambda}.
\end{align}
Let $u$ be as in Lemma~\ref{lem2.4}. Then by \eqref{eq3.50} and Case~2 of Remark~\ref{rem2.1}, $u$ is a $C^\infty$ positive solution of \eqref{eq1.10} and by Lemma~\ref{lem2.4} we have
\[
 u(x_j) \ge \frac{A\psi(|x_j|)}{|x_j|^{n-2}} \log \frac{|x_j|}{r_j}
\]
and by \eqref{eq3.49} and \eqref{eq3.46},
\begin{align*}
 \log \frac{|x_j|}{r_j} &\ge \frac\lambda{n} \log \frac1{\rho_j} 
\ge \frac\lambda{n} \log \left(\frac{\lambda A}n |x_j|^{-a/2}\right)\\
&= \frac\lambda{n} \left(\frac{a}{2} \log \frac1{|x_j|} + \log \frac{\lambda A}n\right).
\end{align*}
Thus by \eqref{eq3.44} we have
\[
 \liminf_{j\to\infty} \frac{u(x_j)}{\sqrt{\varphi(|x_j|)}
   |x_j|^{-(n-2)} \log \frac1{|x_j|}} \ge A \frac{\lambda a}{2n} > 0
\]
from which we obtain \eqref{eq1.17}.
\end{proof}

By scaling $u$ in Theorem \ref{thm1.9}, the following theorem implies
Theorem~\ref{thm1.9}.

\begin{thm}\label{thm3.3}
Let $u(x)$ be a $C^{2m}$ nonnegative solution of
\begin{equation}\label{eq3.51}
 0 \le -\Delta^mu \le e^{u^\lambda+g^\lambda}\quad \text{in}\quad B_2(0)\backslash\{0\} \subset {\bb R}^n
\end{equation}
where $n\ge 2$ and $m\ge 2$ are integers, $m$ is odd, $2m=n$,
$0 <\lambda<1$,
and $g\colon B_2(0)\backslash\{0\} \to [0,\infty)$ is a continuous function such that
\begin{equation}\label{eq3.53}
 g(x) = o\left(|x|^{\frac{-(n-2)}{1-\lambda}}\right)\quad \text{as}\quad x\to 0.
\end{equation}
Then
\begin{equation}\label{eq3.54}
 u(x) = o\left(|x|^{\frac{-(n-2)}{1-\lambda}}\right) \quad \text{as}\quad x\to 0.
\end{equation}
\end{thm}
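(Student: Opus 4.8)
The plan is to argue by contradiction and rescale via Lemma~\ref{lem2.3} in the case $2m=n$, as in the proofs of Theorems~\ref{thm3.1} and~\ref{thm3.2}; the differences are that the power estimates used there are replaced by an exponential one and that the resulting integral inequality for the rescaled nonlinearity has $j$-dependent constants. Write $t=\frac{n-2}{1-\lambda}$; since $u\ge 0$ there is nothing to reduce. If \eqref{eq3.54} fails there are $c_0>0$ and $x_j\to 0$ with $0<4|x_{j+1}|<|x_j|<1/2$ and $u(x_j)\ge c_0|x_j|^{-t}$. Let $A=A(m,n)$ and $\vp_j$ be as in Lemma~\ref{lem2.3}. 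By Young's inequality there is a constant $c'=c'(\lambda,A)>0$ such that $A^\lambda|x_j|^{-(n-2)\lambda}P^\lambda\le P+c'|x_j|^{-\lambda t}$ for all $P\ge 0$ (here one uses $\frac{(n-2)\lambda}{1-\lambda}=\lambda t$). Set $L_j:=\frac{3c'}{n}|x_j|^{-\lambda t}$ and $r_j:=|x_j|e^{-L_j}$; since $L_j\to\infty$ we have $0<r_j\le|x_j|/5$ for large $j$, so after discarding finitely many $j$ the sequences $x_j,r_j$ satisfy \eqref{eq2.14}. Let $f_j$ be as in \eqref{eq2.15} and put $P_j(\xi):=\intl_{|\eta|<2}\bigl(\log\tfrac{5}{|\xi-\eta|}\bigr)f_j(\eta)\,d\eta$, noting $\log\tfrac{|x_j|}{r_j}=L_j$.

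First I would extract the blow-up information. Evaluating \eqref{eq2.18} at $\xi=0$, multiplying through by $L_j$, and using $|x_j|^{n-2}u(x_j)\ge c_0|x_j|^{\,n-2-t}=c_0|x_j|^{-\lambda t}$ together with $\vp_jL_j=o(|x_j|^{-\lambda t})$ (as $\vp_j\to0$) yields $AP_j(0)\ge\frac{c_0}{2}|x_j|^{-\lambda t}-C$, hence $P_j(0)\to\infty$. Next, for $|\xi|<1$, \eqref{eq2.18} gives $u(x_j+r_j\xi)\le|x_j|^{-(n-2)}\bigl(C+\vp_jL_j+AP_j(\xi)\bigr)$, while $\tfrac45|x_j|\le|x_j+r_j\xi|\le\tfrac65|x_j|$ and \eqref{eq3.53} give $g(x_j+r_j\xi)^\lambda=o(|x_j|^{-\lambda t})$ uniformly in $\xi$. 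Raising to the power $\lambda$, using subadditivity of $s\mapsto s^\lambda$, and the identities $(n-2)\lambda<\lambda t$ and $\lambda^2t+(n-2)\lambda=\lambda t$ (which make $C^\lambda|x_j|^{-(n-2)\lambda}$ and $(\vp_jL_j)^\lambda|x_j|^{-(n-2)\lambda}$ each $o(|x_j|^{-\lambda t})$), one obtains
\[
u(x_j+r_j\xi)^\lambda+g(x_j+r_j\xi)^\lambda\le A^\lambda|x_j|^{-(n-2)\lambda}P_j(\xi)^\lambda+\Sigma_j,\qquad\Sigma_j=o(|x_j|^{-\lambda t}).
\]
Since $f_j(\xi)=|x_j|^{n-2}r_j^n(-\Delta^mu)(x_j+r_j\xi)\le|x_j|^{n-2}r_j^n\exp\!\bigl(u(x_j+r_j\xi)^\lambda+g(x_j+r_j\xi)^\lambda\bigr)$ by \eqref{eq3.51}, the Young inequality above converts this into
\[
f_j(\xi)\le\tilde K_j\,e^{P_j(\xi)}\quad(|\xi|<1),\qquad \tilde K_j:=|x_j|^{2n-2}e^{-nL_j+c'|x_j|^{-\lambda t}+\Sigma_j}=|x_j|^{2n-2}e^{-2c'|x_j|^{-\lambda t}+\Sigma_j}\to0 .
\]

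To close the loop I would use a Jensen (Moser--Trudinger type) bound for exponentials of logarithmic potentials of functions with small $L^1$ mass. By \eqref{eq2.16}, $\|f_j\|_{L^1(B_1)}\le\intl_{|\eta|<2}f_j(\eta)\,d\eta\to0$. For $|\xi|<1/2$ the part of $P_j(\xi)$ over $1\le|\eta|<2$ is $\le(\log 10)\intl_{|\eta|<2}f_j(\eta)\,d\eta\le1$ for large $j$, and for the part over $|\eta|<1$ Jensen's inequality (with the probability measure $f_j\,d\eta/\|f_j\|_{L^1(B_1)}$) gives, for each fixed $p\ge1$,
\[
\intl_{|\xi|<1/2}e^{pP_j(\xi)}\,d\xi\le\frac{e^{p}}{\|f_j\|_{L^1(B_1)}}\intl_{|\eta|<1}f_j(\eta)\Bigl(\,\intl_{|z|<3/2}\bigl(\tfrac{5}{|z|}\bigr)^{p\|f_j\|_{L^1(B_1)}}\,dz\Bigr)d\eta\le C(n,p)
\]
for all large $j$, because $p\|f_j\|_{L^1(B_1)}<n$ eventually. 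Hence $\|f_j\|_{L^p(B_{1/2})}\le\tilde K_j\,C(n,p)^{1/p}\to0$ for every $p\ge1$. Taking $p=2$ and bounding $P_j(0)\le(\log 10)\|f_j\|_{L^1(B_2)}+\bigl\|\log\tfrac{5}{|\cdot|}\bigr\|_{L^2(B_{1/2})}\|f_j\|_{L^2(B_{1/2})}\to0$ contradicts $P_j(0)\to\infty$, which proves \eqref{eq3.54}.

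The feature that makes this harder than Theorems~\ref{thm3.1}--\ref{thm3.2}, and the main obstacle, is that the self-referential integral inequality for $f_j$ has $j$-dependent constants: the natural exponential constant $A^\lambda|x_j|^{-(n-2)\lambda}$ diverges. The two devices that overcome this are (i) exploiting $0<\lambda<1$ through Young's inequality to trade $A^\lambda|x_j|^{-(n-2)\lambda}P_j^\lambda$ for $P_j$ plus an additive term $c'|x_j|^{-\lambda t}$, and (ii) choosing $r_j$ so that $\log\tfrac{|x_j|}{r_j}$ sits precisely at the scale $|x_j|^{-\lambda t}$: large enough that the prefactor $|x_j|^{2n-2}e^{-n\log(|x_j|/r_j)}$ overwhelms $e^{c'|x_j|^{-\lambda t}}$, yet (since $\vp_j\to0$) still small enough that $\vp_j\log\tfrac{|x_j|}{r_j}=o(|x_j|^{-\lambda t})$, so that the hypothesis $u(x_j)\ge c_0|x_j|^{-t}$ survives the rescaling as $P_j(0)\to\infty$. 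Once the constant is made uniform, the Jensen bound needs only $\|f_j\|_{L^1}\to0$ with no rate, which is exactly what \eqref{eq2.16} supplies.
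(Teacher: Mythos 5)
Your argument is correct, and while it starts exactly as the paper does (contradiction, the same dyadic sequence $x_j$, the same rescaling through Lemma~\ref{lem2.3} with $\log\frac{|x_j|}{r_j}$ at the scale $|x_j|^{-\frac{(n-2)\lambda}{1-\lambda}}$, and the same extraction of $P_j(0)\gtrsim |x_j|^{-\lambda t}$ from \eqref{eq2.18}), it closes the contradiction by a genuinely different device. The paper keeps the nonlinearity in the form $e^{u_j(\xi)^\lambda+M_j^\lambda}$, splits $B_1$ along the level set $\Omega_j=\{u_j>M_j\}$, and treats the two pieces separately: on $\Omega_j$ it applies Jensen's inequality to the convex tail of $\exp(t^\lambda)$, computes $\intl_{|\xi|<1}\exp\bigl(b_j^\lambda(\log\frac{5}{|\xi|})^\lambda\bigr)d\xi$ explicitly, and uses the factor $r_j^{2n}=\exp(-2n|x_j|^{-\lambda t})$ to obtain $L^2$-smallness and then H\"older; off $\Omega_j$ it only has the pointwise bound $f_j\le e^{2M_j^\lambda}$ and needs a separate mass-rearrangement (bathtub) argument to contradict \eqref{eq3.65}--\eqref{eq3.67}. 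You instead linearize the exponent once and for all via Young's inequality, $A^\lambda|x_j|^{-(n-2)\lambda}P^\lambda\le P+c'|x_j|^{-\lambda t}$, and calibrate $r_j$ (the constant $\frac{3c'}{n}$ in $L_j$) so that the prefactor $|x_j|^{n-2}r_j^n$ swallows $e^{c'|x_j|^{-\lambda t}}$; this yields the single uniform inequality $f_j\le\tilde K_je^{P_j}$ on $B_1$ with $\tilde K_j\to0$, after which the standard Jensen/Fubini exponential-integrability estimate for logarithmic potentials of functions with small $L^1$ mass (essentially your $L^1\to e^{L}$ version of Lemma~\ref{lem2.5}) gives $\|f_j\|_{L^p(B_{1/2})}\to0$ and hence $P_j(0)\to0$, contradicting $P_j(0)\to\infty$. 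What your route buys is the elimination of both the level-set decomposition and the rearrangement argument, at the harmless cost of a $\lambda$- and $A$-dependent constant in the choice of $r_j$; the paper's route keeps $r_j$ universal and works directly with the $\exp(t^\lambda)$ structure. All the identities you rely on ($\lambda t=\frac{(n-2)\lambda}{1-\lambda}$, $\lambda^2t+(n-2)\lambda=\lambda t$, uniformity of $g(x_j+r_j\xi)^\lambda=o(|x_j|^{-\lambda t})$ on $\frac45|x_j|\le|x|\le\frac65|x_j|$, and $p\|f_j\|_{L^1}<n$ eventually) check out, so the proof stands as written; only trivial housekeeping (the degenerate case $\|f_j\|_{L^1(B_1)}=0$, and passing to a subsequence so that \eqref{eq2.14} holds) would need a sentence.
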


\begin{proof}
Suppose for contradiction that \eqref{eq3.54} does not hold. Then
there exists a sequence $\{x_j\}_{j=1}^\infty \subset {\bb R}^n$ such that
\[
 0 < 4|x_{j+1}| < |x_j| < 1/2
\]
and
\begin{equation}\label{eq3.55}
 \liminf_{j\to\infty} |x_j|^{\frac{n-2}{1-\lambda}} u(x_j)>0.
\end{equation}
Define $r_j>0$ by
\begin{equation}\label{eq3.56}
 \log \frac1{r_j} = |x_j|^{\frac{-(n-2)\lambda}{1-\lambda}}.
\end{equation}
Then
\begin{align}
 \log \frac{|x_j|}{r_j} &= \log \frac1{r_j} -  \log \frac1{|x_j|} = |x_j|^{\frac{-(n-2)\lambda}{1-\lambda}} \left[1 - |x_j|^{\frac{(n-2)\lambda}{1-\lambda}} \log \frac1{|x_j|}\right]\notag\\
\label{eq3.57}
&= |x_j|^{\frac{-(n-2)\lambda}{1-\lambda}} (1+o(1)) \quad \text{as}\quad j\to\infty.
\end{align}
So, by taking a subsequence of $j$ if necessary, we can assume $r_j<|x_j|/4$.

Let $f_j$ be as in Lemma~\ref{lem2.3}. Multiplying \eqref{eq2.18} by $|x_j|^{\frac{(n-2)\lambda}{1-\lambda}} \log \frac{|x_j|}{r_j}$ and using \eqref{eq3.57} we get for $|\xi|<1$ that
\begin{equation}\label{eq3.58}
 |x_j|^{\frac{n-2}{1-\lambda}} u(x_j+r_j\xi) \le \vp_j + |x_j|^{\frac{(n-2)}{1-\lambda}} \intl_{|\eta|<2} A\left(\log \frac5{|\xi-\eta|}\right) \frac{f_j(\eta)}{|x_j|^{n-2}}\,d\eta
\end{equation}
where the constant $A$ depends only on $m$ and $n$,
the constants $\vp_j$ are independent of $\xi$, and $\vp_j\to 0$ as $j\to\infty$. Substituting $\xi=0$ in \eqref{eq3.58} and using \eqref{eq3.55} and \eqref{eq2.16} we get
\begin{equation}\label{eq3.59}
 \liminf_{j\to\infty} |x_j|^{\frac{n-2}{1-\lambda}} \intl_{|\eta|<1} \left(\log \frac5{|\eta|}\right) \frac{f_j(\eta)}{|x_j|^{n-2}}\,d\eta > 0.
\end{equation}
By \eqref{eq2.15}, \eqref{eq3.51}, \eqref{eq3.58}, and \eqref{eq3.53} we have  
\begin{equation}\label{eq3.60}
 \frac{f_j(\xi)}{|x_j|^{n-2}r^n_j} \le
 e^{u_j(\xi)^\lambda+M^\lambda_j} \quad\text{for} \quad |\xi|<1
\end{equation}
where
\[
 u_j(\xi) = \intl_{|\eta|<2} A\left(\log \frac5{|\xi-\eta|}\right) \frac{f_j(\eta)}{|x_j|^{n-2}}\,d\eta
\]
and the positive constants $M_j$ satisfy
\begin{equation}\label{eq3.61}
 M_j|x_j|^{\frac{n-2}{1-\lambda}} \to 0 \quad \text{and}\quad
 M_j\to\infty \quad \text{as} \quad j\to\infty.
\end{equation}
Let $\Omega_j =\{\xi\in B_1(0)\colon u_j(\xi) > M_j\}$. Then for $\xi\in \Omega_j$ it follows from \eqref{eq3.60} that
\begin{align}
 \frac{f_j(\xi)^2}{(|x_j|^{n-2}r^n_j)^2} &\le e^{4u_j(\xi)^\lambda}\notag\\
\label{eq3.62}
&\le \exp\left[\left(\, \intl_{|\eta|<2} b_j\left(\log \frac5{|\xi-\eta|}\right) \frac{f_j(\eta)}{\int_{B_2}f_j} d\eta\right)^\lambda\right]
\end{align}
where
\[
 b_j = 4^{1/\lambda} A|x_j|^{-(n-2)} \max\left\{\intl_{B_2} f_j, |x_j|^{\frac{n-2}2}\right\}.
\]
By \eqref{eq2.16},
\begin{equation}\label{eq3.63}
 b_j|x_j|^{n-2}\to 0\quad \text{and}\quad
 b_j\to\infty\quad\text{as}\quad j\to\infty.
\end{equation}
Hence by \eqref{eq3.62}, Jensen's inequality, and the fact that $\exp(t^\lambda)$ is concave up for $t$ large we have for $\xi\in\Omega_j$ that
\[
 \frac{f_j(\xi)^2}{(|x_j|^{n-2}r^n_j)^2} \le \intl_{|\eta|<2} \exp\left(b^\lambda_j \left(\log \frac5{|\xi-\eta|}\right)^\lambda\right) \frac{f_j(\eta)}{\int_{B_2}f_j}\, d\eta
\]
and consequently
\begin{align}
\intl_{\Omega_j} \frac{f_j(\xi)^2}{(|x_j|^{n-2}r^n_j)^2}\,d\xi &\le
\intl_{|\eta|<2}\left(\, \intl_{|\xi|<1} \exp\left(b^\lambda_j\left(\log \frac5{|\xi-\eta|}\right)^\lambda\right) d\xi\right) \frac{f_j(\eta)}{\int_{B_2} f_j}\,d\eta\notag\\
&\le \max_{|\eta|\le 2} \intl_{|\xi|<1} \exp\left(b^\lambda_j\left(\log \frac5{|\xi-\eta|}\right)^\lambda\right)\, d\xi\notag\\
&= \intl_{|\xi|<1} \exp\left(b^\lambda_j\left(\log \frac5{|\xi|}\right)^\lambda\right) \, d\xi\notag\\ 
\label{eq3.64}
&= I_1+I_2
\end{align}
where
\[
 I_1 = \intl_{\underset{\sst |\xi|<1}{\log \frac5{|\xi|}<(b^\lambda_j\lambda)^{\frac1{1-\lambda}}}} \exp\left(b^\lambda_j(\log \frac5{|\xi|})^\lambda\right)\,d\xi\quad \text{and}\quad I_2 = \intl_{\log \frac5{|\xi|}>(b^\lambda_j\lambda)^{\frac1{1-\lambda}}} \exp\left(b^\lambda_j(\log \frac5{|\xi|})^\lambda\right) \,d\xi.
\]
Clearly
\[
 \frac{I_1}{|B_1(0)|} \le \exp\left((b_j(b^\lambda_j\lambda)^{\frac1{1-\lambda}})^\lambda\right) = \exp\left((b_j\lambda)^{\frac\lambda{1-\lambda}}\right) \le \exp\left(b^{\frac\lambda{1-\lambda}}_j\right)
\]
and using Jensen's inequality and the fact that $e^{b^\lambda_j(\log t)^\lambda}$ is concave down as a function of $t$ for $\log t > (b^\lambda_j\lambda)^{\frac1{1-\lambda}}$ one can show that
\[
 I_2 \le \exp\left(C b^{\frac\lambda{1-\lambda}}_j\right)
\]
where $C$ depends only on $n$. Therefore by \eqref{eq3.64} and \eqref{eq3.56},
\begin{align*}
 \intl_{\Omega_j} \left(\frac{f_j(\xi)}{|x_j|^{n-2}}\right)^2 d\xi &\le  \exp\left(-2n |x_j|^{\frac{-(n-2)\lambda}{1-\lambda}}\right) \exp\left(Cb^{\frac\lambda{1-\lambda}}_j\right)\\
&= \exp \left(C b^{\frac\lambda{1-\lambda}}_j - 2n|x_j|^{\frac{-(n-2)\lambda}{1-\lambda}}\right) \to 0\quad \text{as}\quad j\to\infty
\end{align*}
by \eqref{eq3.63}. Thus by H\"older's inequality
\[
 \lim_{j\to \infty} \intl_{\Omega_j} \left(\log \frac5{|\eta|}\right) \frac{f_j(\eta)}{|x_j|^{n-2}}\,d\eta = 0.
\]
Hence defining $g_j\colon  B_1(0)\to [0,\infty)$ by
\[
 g_j(\xi) := \begin{cases}
              f_j(\xi)&\text{for $\xi\in B_1\backslash\Omega_j$,}\\
0&\text{for $\xi\in \Omega_j$,}
             \end{cases}
\]
it follows from \eqref{eq3.59} and \eqref{eq3.61} that
\begin{equation}\label{eq3.65}
 \frac1{M^\lambda_j} \intl_{|\eta|<1} \left(\log \frac5{|\eta|}\right) g_j(\eta)\,d\eta \to \infty \quad \text{as}\quad j\to \infty.
\end{equation}
By \eqref{eq2.16}, we have
\begin{equation}\label{eq3.66}
 \intl_{|\eta|<1} g_j(\eta)\,d\eta\to 0\quad \text{as}\quad j\to\infty
\end{equation}
and by \eqref{eq3.60} we have
\begin{equation}\label{eq3.67}
 g_j(\xi) \le e^{2M^\lambda_j}\quad \text{in}\quad B_1(0).
\end{equation}

For fixed $j$, think of $g_j(\eta)$ as the density of a distribution of mass in $B_1$ satisfying \eqref{eq3.65}, \eqref{eq3.66}, and \eqref{eq3.67}. By moving small pieces of this mass nearer to the origin in such a way that the new density (which we again denote by $g_j(\eta))$ does not violate \eqref{eq3.67}, we will not change the total mass $\int_{B_1} g_j(\eta)\,d\eta$ but $\int_{B_1}(\log 5/|\eta|) g_j(\eta)\,d\eta$ will increase. Thus for some $\rho_j\in (0,1)$ the functions
\[
 g_j(\eta) = \begin{cases}
              e^{2M^\lambda_j}&\text{for $|\eta|<\rho_j$,}\\
0&\text{for $\rho_j<|\eta|<1$}
             \end{cases}
\]
satisfy \eqref{eq3.65}, \eqref{eq3.66}, and \eqref{eq3.67}, which, as elementary and explicit calculations show, is impossible because $M_j\to\infty$ as $j\to \infty$. This contradiction proves Theorem~\ref{thm3.3}.
\end{proof}

\begin{proof}[Proof of Theorem \ref{thm1.10}]
Define $\psi\colon (0,1)\to (0,1)$ by 
\[
 \psi(r) = \max\left\{\varphi(r)^{\frac{1-\lambda}2}, r^{\frac{n-2}2}\right\}.
\]
Since $\psi(r)\ge r^{\frac{n-2}2}$ there exists a sequence
$\{x_j\}^\infty_{j=1} \subset {\bb R}^n$ satisfying \eqref{eq2.25} and
\eqref{eq2.26} such that if we define the sequence
$\{r_j\}^\infty_{j=1}\subset (0,\infty)$ by 
\begin{equation}\label{eq3.68}
 \log \frac{|x_j|}{r_j} = \left[\frac1{2n} \left(\frac{A\psi(|x_j|)}{|x_j|^{n-2}}\right)^\lambda \right]^{\frac1{1-\lambda}},
\end{equation}
where $A=A(m,n)$ is as in Lemma \ref{lem2.4}, then $r_j$ will satisfy \eqref{eq2.27} and
\[
 \log \frac1{|x_j|^{2n-2}} < \log \frac{|x_j|}{r_j}.
\]
Thus
\begin{align}
 \frac{\log \frac{\psi(|x_j|)}{|x_j|^{2n-2}} + n \log \frac{|x_j|}{r_j}}{\left(\frac{A \psi(|x_j|)}{|x_j|^{n-2}} \log \frac{|x_j|}{r_j}\right)^\lambda} &\le \frac{2n \log \frac{|x_j|}{r_j}}{\left(\frac{A\psi(|x_j|)}{|x_j|^{n-2}} \log \frac{|x_j|}{r_j}\right)^\lambda}\notag\\
\label{eq3.69}
&= \frac{\left(\log \frac{|x_j|}{r_j}\right)^{1-\lambda}}{\frac1{2n} \left(\frac{A \psi(|x_j|)}{|x_j|^{n-2}}\right)^\lambda} = 1.
\end{align}
Let $u$ be as in Lemma~\ref{lem2.4}. Then by \eqref{eq3.69} and Case~2 of Remark~\ref{rem2.1}, $u$ is a $C^\infty$ positive solution of \eqref{eq1.20} and by Lemma~\ref{lem2.4} and \eqref{eq3.68} we have
\begin{align*}
 u(x_j) &\ge \frac{A\psi(|x_j|)}{|x_j|^{n-2}} \left[\frac1{2n} \left(\frac{A\psi(|x_j|)}{|x_j|^{n-2}}\right)^\lambda\right]^{\frac1{1-\lambda}}\\
&= \left(\frac{A\psi(|x_j|)}{2n}\right)^{\frac1{1-\lambda}} \frac1{|x_j|^{\frac{n-2}{1-\lambda}}}\\
&\ge \left(\frac{A}{2n}\right)^{\frac1{1-\lambda}} \sqrt{\varphi(|x_j|)}\ |x_j|^{-\frac{n-2}{1-\lambda}}
\end{align*}
which implies \eqref{eq1.21}.
\end{proof}

\begin{proof}[Proof of Theorem \ref{thm1.11}]
Define $\psi\colon (0,1)\to (0,1)$ by $\psi(r) = r^{\frac{n-2}2}$. Choose a sequence $\{x_j\}^\infty_{j=1} \subset {\bb R}^n$ satisfying \eqref{eq2.25}, \eqref{eq2.26}, and
\begin{equation}\label{eq3.70}
 \frac{A\psi(|x_j|)}{|x_j|^{n-2}} > n+1
\end{equation}
where $A=A(m,n)$ is as in Lemma~\ref{lem2.4}. Choose a sequence $\{r_j\}^\infty_{j=1} \subset {\bb R}$ satisfying \eqref{eq2.27},
\begin{equation}\label{eq3.71}
 \log \frac1{|x_j|^{2n-2}} < \log \frac{|x_j|}{r_j}
\end{equation}
and
\begin{equation}\label{eq3.72}
 (n+1) \log \frac{|x_j|}{r_j} > \varphi(|x_j|)^2.
\end{equation}
Then, since $\psi(|x_j|)<1$, it follows from 
\eqref{eq3.71} and \eqref{eq3.70} that
\begin{equation}\label{eq3.73}
 \log \frac{\psi(|x_j|)}{|x_j|^{2n-2}} + n \log \frac{|x_j|}{r_j} \le (n+1) \log \frac{|x_j|}{r_j} \le \left((n+1) \log \frac{|x_j|}{r_j}\right)^\lambda \le \left(\frac{A\psi(|x_j|)}{|x_j|^{n-2}} \log \frac{|x_j|}{r_j}\right)^\lambda.
\end{equation}
Let $u$ be as in Lemma~\ref{lem2.4}. Then by \eqref{eq3.73} and Case~2 of Remark~\ref{rem2.1}, $u$ is a $C^\infty$ positive solution of \eqref{eq1.20} and by Lemma~\ref{lem2.4}, \eqref{eq3.70} and \eqref{eq3.72} we have
\[
 u(x_j) \ge \varphi(x_j)^2
\]
which implies \eqref{eq1.23}.
\end{proof}

We now prove Proposition \ref{pro1.1}. It follows immediately from the
following more general proposition, which is easier to prove than
Proposition \ref{pro1.1}.

\begin{pro}\label{pro3.1}
Suppose $u$ is a $C^{2m}$ radial solution of
\begin{equation}\label{eq3.75}
\Delta^mu\le A\Gamma \quad \text{in}\quad \overline{B_1(0)}\backslash
\{0\}\subset {\bb R}^n,
\end{equation}
where $m\ge 1$ and $n\ge 2$ are integers, $\Gamma$ is given by
\eqref{eq1.3}, and $A$ is a positive constant. Then
\begin{equation}\label{eq3.76}
u\le C\Gamma\quad\text{in}\quad\overline{B_1(0)}\backslash\{0\}
\end{equation}
for some positive constant $C$. 
\end{pro}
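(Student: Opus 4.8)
The plan is to reduce everything to the case $m=1$, which is a one-dimensional ODE computation, and then to iterate. Throughout we use that $\Gamma(|x|)\ge 1$ on $\overline{B_1(0)}\setminus\{0\}$ (indeed $r^{-(n-2)}\ge 1$ for $0<r\le 1$ when $n\ge 3$, and $\log(5/r)\ge\log 5>1$ for $0<r\le 1$ when $n=2$).

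I will first prove the following statement, which is Proposition~\ref{pro3.1} for $m=1$: \emph{if $w\in C^2(\overline{B_1(0)}\setminus\{0\})$ is radial and $\Delta w\le A\Gamma$ in $\overline{B_1(0)}\setminus\{0\}$ for some constant $A>0$, then $w\le C\Gamma$ there for some $C>0$.} On the compact annulus $\{1/2\le |x|\le 1\}$ the continuous function $w$ is bounded above by some $M$, and since $\Gamma\ge 1$ this gives $w\le M\Gamma$ there; so it remains to bound $w$ on $0<|x|\le 1/2$. Writing $w=w(r)$, $r=|x|$, the radial identity $\Delta w=r^{1-n}(r^{n-1}w')'$ gives
\[
(r^{n-1}w'(r))'\le A\,r^{n-1}\Gamma(r)\qquad(0<r\le 1).
\]
Since $r^{n-1}\Gamma(r)$ equals $r$ (if $n\ge 3$) or $r\log(5/r)$ (if $n=2$), it is integrable on $(0,1/2)$, so integrating the displayed inequality from $r$ to $1/2$ yields $r^{n-1}w'(r)\ge c_1$ for $0<r\le 1/2$, where $c_1:=(1/2)^{n-1}w'(1/2)-A\int_0^{1/2}s^{n-1}\Gamma(s)\,ds$ is a finite constant. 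Dividing by $r^{n-1}$ and integrating $w'$ from $r$ to $1/2$ gives
\[
w(r)\le w(1/2)-c_1\int_r^{1/2}s^{-(n-1)}\,ds\le |w(1/2)|+|c_1|\,\Gamma(r),
\]
where the second inequality uses $0\le\int_r^{1/2}s^{-(n-1)}\,ds\le\Gamma(r)$, a fact one checks directly in each of the cases $n\ge 3$ and $n=2$. Since $\Gamma\ge 1$, the right side is $\le(|w(1/2)|+|c_1|)\Gamma(r)$, which proves the claim with $C=\max\{M,\,|w(1/2)|+|c_1|\}$.

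For $m\ge 2$ I argue by induction on $m$, the constant $A$ allowed to be arbitrary at each stage. Given a radial $u\in C^{2m}(\overline{B_1(0)}\setminus\{0\})$ with $\Delta^m u\le A\Gamma$, the function $v:=\Delta u$ is radial, belongs to $C^{2(m-1)}(\overline{B_1(0)}\setminus\{0\})$, and satisfies $\Delta^{m-1}v\le A\Gamma$; by the inductive hypothesis $\Delta u=v\le C'\Gamma$ for some $C'>0$, and then the $m=1$ case applied to $u$ with the constant $C'$ gives $u\le C\Gamma$, as desired.

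I do not expect a genuine obstacle here: the whole point is that $\Gamma$ is the radial scale of the fundamental solution of $-\Delta$, which is exactly what makes $r^{n-1}\Gamma(r)$ integrable near the origin (so that $c_1$ is finite) while keeping $\int_r^{1/2}s^{-(n-1)}\,ds$ no larger than $\Gamma(r)$. The only mildly delicate points are the bookkeeping of constants --- in particular allowing $c_1$ and $w(1/2)$ to have either sign --- and the repeated use of $\Gamma\ge 1$ to absorb additive constants into a multiple of $\Gamma$.
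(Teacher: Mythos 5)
Your proof is correct and follows essentially the same route as the paper: induction on $m$, with the $m=1$ case handled by the elementary radial ODE computation exploiting the integrability of $\rho^{n-1}\Gamma(\rho)$ near the origin and the bound $\int_r^{1/2}s^{1-n}\,ds\le\Gamma(r)$. The only differences are cosmetic — the paper writes the $m=1$ case as an explicit representation formula rather than integrating the differential inequality twice, and its induction step applies the $m=1$ case to $\Delta^{m-1}u$ before invoking the inductive hypothesis, while you do it in the reverse order.
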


\begin{proof} 
We use induction. Suppose $m=1$. Let $g=\Delta u$. Then for $0<r\le 1$
we have 
\begin{equation}\label{eq3.77}
u(r)=u(1)-u'(1)\int_r^1 \rho^{1-n}\,d\rho + \int_r^1 s^{1-n}
\int_s^1 g(\rho)\rho^{n-1}\,d\rho\,ds.
\end{equation}
Since $g\le A\Gamma$ we have 
\[
\int_s^1g(\rho)\rho^{n-1}\, d\rho \le
A\int_0^1\Gamma(\rho)\rho^{n-1}\,d\rho <\infty.
\]
Hence \eqref{eq3.76} follows from \eqref{eq3.77}.

Suppose inductively that the propositon is true for $m-1$ where $m\ge
2$ and $u$ is a $C^{2m}$ radial solution of \eqref{eq3.75}. Then by
the $m=1$ case, $\Delta^{m-1}u\le A_1\Gamma$ in 
$\overline{B_1(0)}\backslash\{0\}$ for some positive constant
$A_1$. Thus \eqref{eq3.76} follows from the inductive assumption.
\end{proof}

\section{Proofs when the singularity is at infinity}\label{sec4}
 
In this section we prove Theorems~\ref{thm1.14}--\ref{thm1.17}, which
deal with the case that the singularity is at infinity.

Since the proofs of Theorems \ref{thm1.14} and \ref{thm1.15} are
similar, we prove them together.

\begin{proof}[Proof of Theorem \ref{thm1.14}, (resp. \ref{thm1.15})]
For some constant $K>0$, $v(y)$ satisfies
\[
0\le -\Delta^mv\le K(v+1)^\lambda \quad\text{in}\quad {\bb
  R}^n\backslash B_{1/2}(0).
\]
Let $u(x)$ be defined by \eqref{eq1.26}. Then, by \eqref{eq1.27}, $u(x)$
is a nonnegative solution of 
\[
0\le -\Delta^mu\le K|x|^{-\alpha_0}(u+g_0(x))^\lambda \quad\text{in}\quad 
  B_{2}(0)\backslash \{0\},
\]
where $\alpha_0=2m+n-(n-2m)\lambda$ and 
\[
g_0(x)=|x|^{-(n-2m)}=o(|x|^{-(n-2)}) \quad\text{as } x\to 0.
\]
Thus $u$ satisfies \eqref{eq3.1} and \eqref{eqnew1.1} with
$\lambda_0=\lambda$ and $\lambda_k=1$, $\alpha_k=-(n-2+k)$,
$g_k(x)\equiv 1$ for $k=1,2,\ldots,2m-1$.
Using these values of $\lambda_k$ and $\alpha_k$ in $b$, as defined in
Theorem \ref{thm3.1}, (resp. \ref{thm3.2}), we get 
\[
b=\frac{\lambda(2m-2)+2}{n-\lambda(n-2m)}>0.
\]
It therefore follows from part (ii) of Theorem \ref{thm3.1},
(resp. \ref{thm3.2}), that as $x\to 0$ we have
\[
u(x)=o(|x|^{-a_0}), \quad\left(\text{resp.}\quad
u(x)=o(|x|^{-(n-2)}\log\frac{5}{|x|})\right), 
\]
where $a_0=(n-2m)b+n-2$. This estimate for $u(x)$ is equivalent, via
\eqref{eq1.26}, to \eqref{eq1.29}, (resp. \eqref{new}).
\end{proof}

By scaling and translating $v$ in Theorem \ref{thm1.16}, we see that
the following theorem implies Theorem~\ref{thm1.16}.

\begin{thm}\label{thm4.3}
Let $v(y)$ be a $C^{2m}$ nonnegative solution of
\begin{equation}\label{eq4.12}
0 \le -\Delta^m v \le e^{v^\lambda+g^\lambda}\quad \text{in}\quad {\bb R}^n\backslash B_{1/2}(0)
\end{equation}
where $m\ge 2$ and $n\ge 2$ are integers, $m$ is odd, $2m=n$, $0<\lambda<1$, and $g\colon {\bb R}^n \backslash B_{1/2}(0) \to [1,\infty)$ is a continuous function satisfying
\begin{align}\label{eq4.13}
g(y) &= o(|y|^{\frac{n-2}{1-\lambda}})\quad \text{as}\quad |y|\to\infty.\\
\intertext{Then}
\label{eq4.14}
v(y) &= o(|y|^{\frac{n-2}{1-\lambda}})\quad \text{as}\quad |y|\to\infty.
\end{align}
\end{thm}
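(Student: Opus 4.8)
The plan is to reduce Theorem~\ref{thm4.3} to Theorem~\ref{thm3.3} via the $m$-Kelvin transform, in the same way that Theorems~\ref{thm1.14} and~\ref{thm1.15} were deduced from parts of Theorems~\ref{thm3.1} and~\ref{thm3.2}. Let $u$ be the $m$-Kelvin transform of $v$. Since $2m=n$, definition \eqref{eq1.26} gives simply $u(x)=v(x/|x|^2)$, and $u$ is a $C^{2m}$ nonnegative function on $B_2(0)\backslash\{0\}$ because $|y|>1/2$ corresponds to $|x|<2$. As the $m$-Kelvin transform is an involution, $v$ is in turn the $m$-Kelvin transform of $u$, so \eqref{eq1.27} gives $\Delta^m v(y)=|x|^{n+2m}\Delta^m u(x)$ with $x=y/|y|^2$; equivalently, writing $y=x/|x|^2$ and using $2m=n$,
\[
0\le -\Delta^m u(x)=|x|^{-2n}\bigl(-\Delta^m v(y)\bigr)\le |x|^{-2n}\,e^{\,u(x)^\lambda+\tilde g(x)^\lambda}\qquad\text{in }B_2(0)\backslash\{0\},
\]
where I have used \eqref{eq4.12}, the identity $u(x)=v(y)$, and the abbreviation $\tilde g(x):=g(x/|x|^2)$.

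The second step is to absorb the conformal factor $|x|^{-2n}$ into the exponent. For $0<|x|<2$ one has $|x|^{-2n}=e^{2n\log(1/|x|)}\le e^{2n\log(5/|x|)}$, so the displayed inequality becomes $0\le -\Delta^m u\le e^{\,u^\lambda+\hat g^\lambda}$ in $B_2(0)\backslash\{0\}$, where
\[
\hat g(x):=\bigl(\tilde g(x)^\lambda+2n\log(5/|x|)\bigr)^{1/\lambda}
\]
is a continuous, positive function on $B_2(0)\backslash\{0\}$. It then remains only to check the growth condition \eqref{eq3.53} for $\hat g$. Since the point $x/|x|^2$ has modulus $1/|x|$, hypothesis \eqref{eq4.13} reads $\tilde g(x)=o(|x|^{-(n-2)/(1-\lambda)})$ as $x\to0$, so $\tilde g(x)^\lambda=o(|x|^{-\lambda(n-2)/(1-\lambda)})$; also $\log(5/|x|)=o(|x|^{-\delta})$ for every $\delta>0$, in particular for $\delta=\lambda(n-2)/(1-\lambda)>0$. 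Adding, $\hat g(x)^\lambda=o(|x|^{-\lambda(n-2)/(1-\lambda)})$, i.e.\ $\hat g(x)=o(|x|^{-(n-2)/(1-\lambda)})$ as $x\to0$, which is \eqref{eq3.53}.

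Theorem~\ref{thm3.3} now applies to $u$ and yields $u(x)=o(|x|^{-(n-2)/(1-\lambda)})$ as $x\to0$. Transforming back, $v(y)=u(y/|y|^2)$ and the point $y/|y|^2$ has modulus $1/|y|$, so this is exactly $v(y)=o(|y|^{(n-2)/(1-\lambda)})$ as $|y|\to\infty$, which is \eqref{eq4.14}. I do not anticipate a genuine obstacle; the only point requiring care is the one just addressed --- that the factor $|x|^{-2n}$ acquired by $\Delta^m$ under the inversion can be absorbed into $g^\lambda$ without spoiling the smallness hypothesis. This is possible precisely because a logarithmic term inside the exponent is negligible against the permitted size $|x|^{-\lambda(n-2)/(1-\lambda)}$ of $g^\lambda$; it is the reason the exponential nonlinearity behaves better under the Kelvin transform than a power nonlinearity, for which the corresponding factor instead forces a shift in the growth exponent.
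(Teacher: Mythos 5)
Your proposal is correct and follows essentially the same route as the paper: apply the $m$-Kelvin transform (which for $2m=n$ is just inversion), absorb the factor $|x|^{-2n}$ into the exponent using \eqref{eq4.13} and the fact that a logarithm is negligible against the power $|x|^{-\lambda(n-2)/(1-\lambda)}$, and then invoke Theorem~\ref{thm3.3} before transforming back. Your write-up simply makes explicit (via $\hat g$) the absorption step that the paper's proof records more tersely.
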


\begin{proof}
Let $u(x)$ be defined by \eqref{eq1.26}. Then by \eqref{eq4.12} and \eqref{eq1.27} we have
\begin{align*}
0 &\le -|x|^{2n} \Delta^mu(x) \le \exp\left(u(x)^\lambda + g\left(\frac{x}{|x|^2}\right)^\lambda\right)
\intertext{and thus by \eqref{eq4.13},}
0 &\le -\Delta^mu(x) \le \exp\left(u(x)^\lambda +
  o\left(\left(\frac1{|x|}\right)\right)^{\frac{\lambda(n-2)}{1-\lambda}}\right)
\quad\text{in}\quad B_2(0)\backslash\{0\}.
\end{align*}
Hence Theorem \ref{thm3.3} implies
\[
u(x) = o(|x|^{-\frac{(n-2)}{1-\lambda}})\quad \text{as}\quad x\to 0
\]
and so \eqref{eq4.14} holds.
\end{proof}

\begin{proof}[Proof of Theorem \ref{thm1.17}]
By using the $m$-Kelvin transform \eqref{eq1.26}, we see that to prove Theorem~\ref{thm1.17} it suffices to prove that there exists a $C^\infty$ positive solution $u(x)$ of
\begin{equation}\label{eq4.15}
0 \le -\Delta^m u \le |x|^\tau u^\lambda\quad \text{in}\quad {\bb R}^n\backslash\{0\},
\end{equation}
where
\[
\tau = \lambda(n-2m) - n - 2m
\]
such that
\begin{equation}\label{eq4.16}
u(x) \ne O(\varphi(|x|^{-1}) |x|^{-(a+n-2m)})\quad \text{as}\quad x\to 0.
\end{equation}
Define $\psi\colon (0,1)\to (0,1)$ by
\begin{equation}\label{eq4.17}
\psi(r) = \max\left\{\varphi(r^{-1})^p, r^{b\frac{n-\lambda(n-2m)}\lambda}\right\}
\end{equation}
where
\[
b := \frac{\lambda(m-1)+1}{n-\lambda(n-2m)} \quad \text{and}\quad p := \frac{n-\lambda(n-2m)}{2n}.
\]
By \eqref{eq1.30}, $b$ and $p$ are positive. Also 
\begin{equation}\label{eq4.18}
1 + 2b = \frac{\lambda(2m-2)-2m+2-\tau}{n-\lambda(n-2m)}\quad \text{and}\quad a = 2m-2+(n-2m)2b.
\end{equation}
Let $\{x_j\}^\infty_{j=1} \subset {\bb R}^n$ be a sequence satisfying \eqref{eq2.25} and \eqref{eq2.26}. Define $r_j>0$ by
\[
r^{n-\lambda(n-2m)}_j = \frac{2^{|\tau|}}{A^\lambda} \frac{|x_j|^{\lambda(2m-2)-2m+2-\tau}}{\psi(|x_j|)^\lambda}
\]
where $A=A(m,n)$ is as in Lemma~\ref{lem2.4}. Then $r_j$ satisfies
\eqref{eq2.36-2} and by \eqref{eq4.17} and \eqref{eq4.18},
\begin{align}\label{eq4.19}
r_j &= C(m,n,\lambda) \frac{|x_j|^{1+2b}}{\psi(|x_j|)^{\frac\lambda{n-\lambda(n-2m)}}}\\
&\le C(m,n,\lambda) |x_j|^{1+b}.\notag
\end{align}
Thus by taking a subsequence of $j,r_j$ will satisfy \eqref{eq2.27}. Let $u$ be as  in Lemma~\ref{lem2.4}. Then by Case~I of Remark~\ref{rem2.1}, $u$ is a $C^\infty$ positive solution of \eqref{eq4.15} and by \eqref{eq2.30}, \eqref{eq4.17}, \eqref{eq4.18}, and \eqref{eq4.19} we have  
\begin{align*}
u(x_j) &\ge \frac{C(m,n,\lambda)\psi(|x_j|)}{|x_j|^{2m-2}} \frac{\psi(|x_j|)^{\frac{\lambda(n-2m)}{n-\lambda(n-2m)}}}{|x_j|^{(1+2b)(n-2m)}}\\
&= \frac{C(m,n,\lambda) \psi(|x_j|)^{\frac{n}{n-\lambda(n-2m)}}}{|x_j|^{(n-2m) + (2m-2) + (n-2m)2b}}\\
&\ge C(m,n,\lambda) \frac{\varphi(|x_j|^{-1})^{1/2}}{|x_j|^{a+n-2m}}
\end{align*}
which implies \eqref{eq4.16}.
\end{proof}

\end{document}